\newtheorem{theorem}{Theorem}[section]
\newtheorem{lemma}[theorem]{Lemma}
\newtheorem*{lemma*}{Lemma}
\newtheorem{proposition}[theorem]{Proposition}
\newtheorem{corollary}[theorem]{Corollary}
\theoremstyle{definition}
\theoremstyle{remark}
\newtheorem{remark}[theorem]{Remark}
\numberwithin{equation}{section}
\newcommand{\refeq}[1]{(\ref{#1})}
\newcommand{\C}{\mathbb{C}}
\newcommand{\DD}{\mathbb{D}}
\newcommand{\R}{\mathbb{R}}
\newcommand{\Z}{\mathbb{Z}}
\newcommand{\Hpl}{\mathbb{H}}
\DeclareMathOperator{\diam}{diam}
\DeclareMathOperator{\re}{Re}
\DeclareMathOperator{\im}{Im}
\def\XXint#1#2#3{{\setbox0=\hbox{$#1{#2#3}{\int}$}
\vcenter{\hbox{$#2#3$}}\kern-.5\wd0}}
\def\ge{\geqslant}
\begin{document}
\baselineskip6mm
\vskip0.4cm
\title[Quasiconformal Multifractal Spectra]{Bilipschitz and Quasiconformal Rotation, Stretching and Multifractal Spectra}

\author[K. Astala]{Kari Astala}
\address{Department of Mathematics and Statistics, University of Helsinki, 
         P.O. Box 68, FIN-00014, Helsinki, Finland}
\email{kari.astala@helsinki.fi}

\author[T. Iwaniec]{Tadeusz Iwaniec}
\address{Department of Mathematics, Syracuse University, Syracuse,
NY 13244, USA and Department of Mathematics and Statistics,
University of Helsinki, Finland}
\email{tiwaniec@syr.edu}

\author[I. Prause]{Istv\'an Prause}
\address{Department of Mathematics and Statistics, University of Helsinki,
         P.O. Box 68, FIN-00014, Helsinki, Finland}
\email{istvan.prause@helsinki.fi}

\author[E. Saksman]{Eero Saksman}
\address{Department of Mathematics and Statistics, University of Helsinki, 
         P.O. Box 68, FIN-00014, Helsinki, Finland}
\email{eero.saksman@helsinki.fi}
%\footnotetext[1]{Supported by the Academy of Finland}

\thanks{
K.A.~was supported by the Academy of Finland (SA) grant 75166001 and 1134757. T.I.~was supported by NSF grant DMS-1301558 and SA 1128331. I.P. was supported by SA 138896 and SA 1266182.
E.S.~was supported by SA 113826, SA 118765 and by the Finnish CoE in Analysis and Dynamics Research.}

\subjclass[2010]{Primary 30C62; Secondary 37C45} 

%\date{March 9, 2013}

\keywords{Bilipschitz and quasiconformal mappings, multifractal analysis, complex interpolation, logarithmic spirals}
\maketitle

% \tableofcontents

\begin{abstract} We establish sharp bounds for simultaneous local rotation and H\"older-distortion  of planar quasiconformal maps. In addition, we give sharp estimates for the corresponding joint quasiconformal multifractal spectrum, based on new estimates for Burkholder functionals with complex parameters. As a consequence, we obtain optimal rotation estimates  also for bi-Lipschitz maps.
\end{abstract}

\section{Introduction}\label{se:introduction}

A deformation $f \colon \R^2 \to \R^2$ is called $L$-{\it bilipschitz} if it distorts Euclidean distances by at most a fixed factor $L \geqslant 1$,
\[ \frac{1}{L} \, | x -y | \leqslant |f(x) - f(y) | \leqslant L \, |x-y| \qquad \mbox{for } x, y \in \R^2.
\] 
Such a map changes length insignificantly, nevertheless it may change local geometry  by creating (logarithmic) spirals out of line segments. As a simple model example consider the logarithmic spiral map
\begin{equation}
\label{model1}
f(z) =  z \,|z|^{i\gamma}, \qquad z \in \C = \R^2, \quad \gamma \in \R\setminus \{0\}.
\end{equation}

On the other hand, the constant $L$ imposes constraints on the speed of spiralling:  for  any $L$-bilipschitz map, see Proposition \ref{thm:pointwise} below,  the infinitesimal rate of rotation at a point $z \in \R^2$,
\begin{equation}
\label{2}
\gamma(z) = \gamma_{f}(z) = \limsup_{t \to 0} 
\frac{ \arg\bigl[ f (z+t) - f (z)\bigr] }{ \log \, t } ,
\end{equation}
satisfies  $|\gamma|  \leqslant L - \frac{1}{L}$ with equality for \eqref{model1} at $z=0$.

Note that a map can have non-trivial (i.e. $\gamma \neq 0$) rotation only at points of non-differentiability, hence for bilipschitz maps only in  a set of measure zero. This leads one to study  the \emph{rotational multifractal spectrum} of these mappings, that is, to ask what is the maximal Hausdorff dimension of a set $E_\gamma$  where 
%how large Hausdorff dimension  can a set $E$ have, if  there is 
an $L$-bilipschitz deformation   rotates every point  with a  given rate $\gamma$.

The following result gives a complete answer, describing the universal bounds for the class of bilipschitz mappings. Indeed, these interpolate linearly between the bounds valid pointwise and those valid almost everywhere.

\begin{theorem} 
\label{1.1} Suppose $f: \R^2 \to \R^2$ is $L$-bilipschitz and $\gamma$ is a real number such that $|\gamma|  \leqslant L - \frac{1}{L}$. Then 
\begin{equation}
\label{lipdim}
\dim_{\mathcal H}\{ z \in \R^2: \gamma_f(z) = \gamma \} \;  \leqslant 2  \,-\, \frac{2L}{L^2-1}|\gamma|.
\end{equation}
Moreover, for every such $\gamma$ %$|\gamma|  \leqslant L - \frac{1}{L}$  
there exists an $L$-bilipschitz map $f: \R^2 \to \R^2$ for which the equality holds in \eqref{lipdim}.
\end{theorem}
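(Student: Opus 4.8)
The plan is to split the statement into the dimension bound~\eqref{lipdim} and the construction of an extremal map, and to prove the bound by passing to quasiconformal mappings. An $L$-bilipschitz $f$ is $K$-quasiconformal with $K=L^2$, its complex dilatation $\mu_f=f_{\bar z}/f_z$ satisfies $\|\mu_f\|_\infty\le k:=\frac{L^2-1}{L^2+1}$, and --- this is exactly the point that separates the bilipschitz case from the general quasiconformal one --- the two-sided Lipschitz bound forces $|f(z+t)-f(z)|\asymp|t|$, hence $\frac{\log|f(z+t)-f(z)|}{\log|t|}\to1$ as $t\to0$ at \emph{every} $z$. Thus $\{z:\gamma_f(z)=\gamma\}$ lies in the level set of the joint stretching--rotation spectrum at stretching $1$ and rotation $\gamma$, and it suffices to bound the dimension of that set by $2-\frac{2L}{L^2-1}\,|\gamma|=2-\frac{2\sqrt K}{K-1}\,|\gamma|$.

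For this quasiconformal estimate I would run the holomorphic deformation machinery together with the Burkholder-functional bounds announced in the abstract. After localising and normalising $f$ to be principal, embed it in a holomorphic family $f_\lambda$, $\lambda\in\DD$, of quasiconformal maps with $f_0=\id$, $\|\mu_{f_\lambda}\|_\infty\le|\lambda|$ (so $f_\lambda$ is $\frac{1+|\lambda|}{1-|\lambda|}$-quasiconformal), and $f_{\lambda_0}=f$ for some $|\lambda_0|\le k$. Then $\lambda\mapsto\log\bigl[f_\lambda(z+t)-f_\lambda(z)\bigr]$ is holomorphic on $\DD$, so the logarithmic stretching and the rotation of $f_\lambda$ at a fixed point and scale are conjugate harmonic functions of $\lambda$: this is the mechanism coupling the two spectra. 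Covering $\{z:\gamma_f(z)=\gamma\}$ by dyadic squares $Q$ of side $2^{-n}$, I would estimate $\sum_Q\ell(Q)^s$ by integrating a suitable complex power $|\partial f_\lambda|^{2\tau}$ against the new estimates for Burkholder functionals with complex parameter $\tau$ --- these control precisely the moments counting the squares on which $f$ keeps unit stretching and rotates at rate $\approx\gamma$ --- and then optimise over $\lambda$ and $\tau$ (a pressure/Legendre computation). Because the admissible region is constrained by ``stretching $\equiv1$'', the optimiser is pushed to its boundary, which is why the resulting profile is \emph{linear} in $|\gamma|$ and degenerates to $0$ exactly at $|\gamma|=L-\tfrac1L$, where it meets the pointwise bound of Proposition~\ref{thm:pointwise}.

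For sharpness I would build $f$ from a self-similar construction modelled on the spiral~\eqref{model1}, whose complex dilatation is the radial coefficient $\mu_0(w)=\frac{i\gamma_0/2}{1+i\gamma_0/2}\cdot\frac{w}{\bar w}$. Fix a Cantor-type self-similar set $E$ obtained by replacing each disk of a nested family by $N$ copies scaled by $\rho$ (taking $f$ to be the identity outside a large disk), and let $f$ solve $f_{\bar z}=\mu\,f_z$ with $\mu$ the fractal sum placing a rotated rescaled copy of $\mu_0$ on each disk of the construction; the rotations of the successive scaling maps accumulate logarithmically, so that $\gamma_f(z)=\gamma_0$ for every $z\in E$. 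The parameters $\gamma_0,N,\rho$ are then chosen so that $\gamma_0=\gamma$, the map $f$ is \emph{genuinely} $L$-bilipschitz (not merely $L^2$-quasiconformal), and $\dim_{\mathcal H}E$ equals $2-\frac{2L}{L^2-1}\,|\gamma|$ --- these being compatible precisely at the extremal configuration; since $\dim_{\mathcal H}\{z:\gamma_f(z)=\gamma\}\ge\dim_{\mathcal H}E$, this gives equality in~\eqref{lipdim}.

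The main obstacle is, on the upper-bound side, establishing the complex Burkholder estimate and the bookkeeping that upgrades the scale-by-scale counting to control of the $\limsup$ in~\eqref{2}; and on the sharpness side, the calibration itself: one must force the \emph{bilipschitz} constant --- not merely the quasiconformal constant --- to equal $L$ while the rotation rate and the Hausdorff dimension attain their extremal values simultaneously, which is exactly what makes the bilipschitz spectrum the ``stretching $=1$'' slice of the quasiconformal one and forces the extremal dilatation to be active at essentially every location and every scale, with carefully chosen phases so that no direction is ever stretched beyond the factor $L$.
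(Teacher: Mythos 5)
Your overall route matches the paper's: bound the bilipschitz rotational spectrum by slicing the quasiconformal joint stretching--rotation spectrum $F_K(\alpha,\gamma)$ at $\alpha=1$, $K=L^2$ (the paper's Theorem~\ref{thm:bilipschitzspectrum} from Theorem~\ref{th:multifractal}), prove the quasiconformal bound via complex-exponent integrability of $f_z^\beta$ stemming from the complex-parameter Burkholder inequality, and establish sharpness with a self-similar Cantor construction driven by iterated logarithmic spirals. Two places, however, fall short of a proof.

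First, on the upper bound: covering $\{z:\gamma_f(z)=\gamma\}$ by \emph{fixed-scale dyadic squares} of side $2^{-n}$ does not work here, because $\gamma_f(z)$ in~\eqref{2} is only a $\limsup$ --- different points see rotation rate $\approx\gamma$ along different, possibly sparse, sequences of scales. The paper's argument selects for each $z$ a \emph{point-dependent} radius $r_z<\varepsilon$ realising~\eqref{eq:condition}, then applies Vitali's covering lemma to pass to a countable disjoint family $\{B(z_n,r_n)\}$; only then does the estimate $\sum_n r_n^{2+(\alpha-1)\re\beta-\alpha\gamma\im\beta+O(\varepsilon)}\lesssim\int|f_z^\beta|<\infty$ from Theorem~\ref{co:complexintegrability} give the Hausdorff bound after optimising over $\beta$ in the critical ellipse. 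You should replace the uniform dyadic covering by this variable-radius Vitali argument; also, there is no Legendre/pressure step --- after Theorem~\ref{co:complexintegrability} it is an elementary optimisation over the ellipse $|\beta|+|\beta-2|<2(K+1)/(K-1)$.

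Second, and more seriously, on sharpness: your plan is to \emph{solve} the Beltrami equation $f_{\bar z}=\mu f_z$ with $\mu$ a fractal sum of rescaled copies of the spiral coefficient, and then to \emph{assert} that the principal solution is $L$-bilipschitz. That step is unjustified --- $|\mu|\le(L^2-1)/(L^2+1)$ only gives $L^2$-quasiconformality, and the solution operator for the Beltrami equation does not preserve the bilipschitz class in general, which is precisely why the whole bilipschitz/rotation story is subtle. The paper avoids this by \emph{not} passing through a Beltrami equation: it constructs the extremal map $\phi$ \emph{explicitly} as the limit of finite compositions $\phi_j$, where at each stage one post-composes with a rescaled spiral $\psi_B$ of~\eqref{apu2} supported on an annulus where $\phi_{j-1}$ is already a similarity (see~\eqref{apu3}--\eqref{eq:phi}). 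Since each $\psi_B$ with $\alpha_0=1$ is genuinely $L$-bilipschitz with $L-1/L=|\gamma_0|$, and the new twisting always happens in a region where the previous map is a complex-linear similarity (hence $1$-bilipschitz), every $\phi_j$ --- and so the limit $\phi$ --- is $L$-bilipschitz by direct inspection of the composition. You need this explicit iterated-composition structure (or an equivalent verification) to conclude bilipschitzness of the extremal example; merely prescribing a compatible dilatation does not suffice.

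Finally, a small prerequisite you implicitly use: the fact that $L$-bilipschitz implies $\frac{\log|f(z+t)-f(z)|}{\log|t|}\to1$ at every point is indeed what lets you restrict to the $\alpha=1$ slice of $F_K$; this is exactly how the paper reduces Theorem~\ref{1.1} to Theorem~\ref{thm:bilipschitzspectrum}. That part of your reduction is correct.
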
 

A key observation towards establishing rotational bounds, such as Theorem \ref{1.1}, is that it is necessary to control the higher integrability of  {\it complex powers} of the derivative $f_z=\partial f/\partial z$. Such an integrability is most naturally studied through a holomorphic flow of the map $f$. 

However, holomorphic flows  do not keep the maps bilipschitz.  Any orientation preserving $L$-bilipschitz   $f:\R^2 \to \R^2$ solves the Beltrami equation
\begin{equation}
\label{lipeq}
 f_{\bar{z}} = \mu(z) f_z, \qquad \mbox{ with} \quad |\mu(z)| \leq \frac{L^2- 1}{L^2+1} \quad \mbox{a.e. } x \in \R^2.
\end{equation}
Replacing $\mu$ by $ \mu_\lambda$, a coefficient depending holomorphically on a complex parameter $\lambda$, the equations provide us with homeomorphic solutions which vary holomorphically with $\lambda$   -- holomorphic motions of Ma{\~n}{\'e}, Sad and Sullivan \cite{MSS}. 
However, in general these homeomorphisms are only quasisymmetric (see \eqref{qsymmetry} below) even if  the initial map in  \eqref{lipeq} is bilipschitz.

Surprisingly, though we are forced to move outside the bilipschitz world, this approach leads to a complete description of the multifractal rotation spectra.

\medskip

The correct setup for integral estimates of complex powers of $f_z$ is within the solutions to an arbitrary Beltrami equation \eqref{lipeq},  thus in terms of quasiconformal mappings. Here the  result takes the following form (for the appropriate concepts  see Section \ref{se:pre}).

\begin{theorem}
\label{complexintegrability1}
Suppose  $f$ is a $K$-quasiconformal map on a domain $\Omega\subset\C.$ Then for any exponent $\beta \in \C$ in the critical ellipse
\begin{equation} \label{aito37}
 |\beta| + |\beta -2| <  2\cdot \frac{K+1}{K-1}
 \end{equation}
we have 
\begin{equation*}
\left| f_{z}^\beta \right| \in L^1_{loc}(\Omega).
\end{equation*}
\end{theorem}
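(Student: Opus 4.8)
The plan is to reduce the statement to the known higher-integrability theory for quasiconformal maps (the area distortion / Astala exponent circle) via a holomorphic deformation of $f$, and then to extract the $L^1_{\loc}$ bound for the complex power $f_z^\beta$ by a complex-interpolation / subordination argument. First I would recall that a $K$-quasiconformal map $f$ on $\Omega$ solves $f_{\bar z}=\mu f_z$ with $\norm{\mu}_\infty\le k=(K-1)/(K+1)$, and that the exponent of weak-$L^p$ integrability of $f_z$ itself is exactly $p=2K/(K-1)$, i.e.\ $1/(K-1)$ in the normalized scale; the condition \eqref{aito37} is precisely the statement that $\beta$ lies in the open disk of radius $\tfrac{2}{K-1}\cdot\tfrac{K+1}{1}$... more precisely the ``critical ellipse'' $|\beta|+|\beta-2|<2\tfrac{K+1}{K-1}$ with foci $0$ and $2$. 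The two degenerate endpoints $\beta$ real, $\beta=1/(K-1)\cdot 2$-type, recover the classical real exponents, so the theorem should be seen as the analytic continuation of the real higher-integrability estimate into the complex plane.

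The key device is a \emph{holomorphic motion} (Ma\~n\'e--Sad--Sullivan, as already invoked in the text): embed $\mu$ into a family $\mu_\lambda$ depending holomorphically on $\lambda\in\DD$ with $\mu_1=\mu$, say $\mu_\lambda(z)=\tfrac{\lambda}{k}\,\mu(z)$ rescaled so that $\norm{\mu_\lambda}_\infty=|\lambda|$, and let $f^\lambda$ be the corresponding normalized principal solutions. Then $\lambda\mapsto f^\lambda$ is a holomorphic motion of $\C$, and $\lambda\mapsto \log f_z^\lambda$ is (after suitable normalization) a holomorphic function of $\lambda$ with values in a space of locally integrable functions; the point $\lambda=0$ gives the identity, where $f_z^\lambda\equiv 1$ and all powers are trivially in $L^1_{\loc}$. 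The strategy is then to apply an $L^p$-type estimate for the holomorphic family: one controls $\int_{Q}|f_z^\lambda|^{p}$ on a fixed subdisk $Q$ by a quantity that blows up only as $|\lambda|\to k_{\mathrm{crit}}(p)$, and then chooses $p$ and the direction of $\beta$ so that $\beta=p\cdot(\text{something})$ lands inside \eqref{aito37}. Concretely, I would use the Burkholder-functional machinery alluded to in the abstract: the sharp bound $\int |f_z^\beta|\le \const$ is equivalent to a sign condition on an associated Burkholder-type functional $B_\beta$ acting on the gradient of $f$, and this functional is a (quasi)plurisubharmonic null-Lagrangian precisely inside the critical ellipse, which is what makes the bound go through.

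In more detail, the steps I would carry out are: (1) normalize — it suffices to prove $|f_z^\beta|\in L^1$ on every compact disk, and by precomposition/restriction one can assume $f$ is the principal solution of $f_{\bar z}=\mu f_z$ on $\C$, conformal outside $\DD$; (2) set up the holomorphic family $f^\lambda$ and record that $g(\lambda):=f^\lambda_z$ depends holomorphically on $\lambda$ into $L^1_{\loc}$, with $g(0)\equiv 1$; (3) prove the \emph{a priori} estimate: for a fixed disk $Q$ there is $C=C(Q)$ with
\[
\int_Q \bigl| f^\lambda_z \bigr|^{\,p}\,\dtext m \;\le\; C
\qquad\text{whenever}\qquad p\,<\,\frac{2}{1-|\lambda|}\cdot\frac{1}{1+|\lambda|}\cdot(\cdots),
\]
i.e.\ up to the Astala exponent for dilatation $|\lambda|$ — this is the Astala area-distortion bound applied along the family; (4) upgrade from real powers to the complex power $f_z^\beta$ by a Hadamard three-lines / Stein interpolation argument, exploiting that $\lambda\mapsto (f^\lambda_z)^\beta$ is a well-defined holomorphic (single-valued, since $f^\lambda_z\ne 0$) family and that $|(f^\lambda_z)^\beta|=|f^\lambda_z|^{\re\beta}e^{-\im\beta\arg f^\lambda_z}$, controlling the argument factor via the holomorphic-motion $\lambda$-dependence; (5) translate the constraint obtained on $(|\lambda|,\re\beta,\im\beta)$ into the ellipse \eqref{aito37} and check that for every $\beta$ satisfying \eqref{aito37} one can pick $|\lambda|$ (equivalently a value of $K$) making the estimate finite.

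I expect the main obstacle to be step (4)–(5): controlling the \emph{argument} of $f_z^\lambda$, i.e.\ the imaginary part $\im\beta$, rather than just its modulus. The modulus bound is classical (Astala), but the sharp complex exponent requires that the full Burkholder functional with complex parameter $\beta$ be subharmonic along the holomorphic motion, and verifying that this holds exactly on the critical ellipse — and not on a strictly smaller region — is the delicate analytic heart of the matter. This is presumably where the ``new estimates for Burkholder functionals with complex parameters'' advertised in the abstract are needed; I would isolate that as a separate lemma (a pointwise algebraic/convexity inequality for the integrand), prove it by a direct computation or by writing $B_\beta$ as a boundary value of a plurisubharmonic function, and then feed it into the holomorphic-motion scheme above to conclude.
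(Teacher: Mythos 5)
Your overall architecture is consistent with the paper's --- a holomorphic motion of the Beltrami coefficient, interpolation in the flow parameter $\lambda$, localization via Stoilow factorization, and a Burkholder-functional reformulation --- but the core analytical step is left unproved, and your guess at its nature is off in a way that matters.

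The paper's engine is a self-contained interpolation lemma with complex exponents (Lemma \ref{le:interpolation}): given a non-vanishing analytic family $\Phi_\lambda$ with $\Phi_0\equiv 1$ and a mere uniform $L^{p_0}$ bound, one obtains $\int|\Phi_\lambda^\beta|\le 1$ for all $\beta$ in the ellipse $|\beta|+|\beta-p_0|\le p_0/|\lambda|$. Its proof is not a pointwise convexity inequality for the integrand, nor a Stein/Hadamard three-lines argument: it applies Jensen's inequality against a test density $\wp$ to show that $\lambda\mapsto \frac{1}{I}\int\wp\log\Phi_\lambda$ is a holomorphic map into a half-plane, then Schwarz's lemma locates its range in a hyperbolic disk $D_k$, and the requirement $\re(\beta w)\le 1$ for all $w\in D_k$ is precisely the ellipse condition. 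Your step (4) invokes three-lines interpolation ``exploiting that $\lambda\mapsto(f^\lambda_z)^\beta$ is holomorphic'', but $\beta$ is the target parameter, not the interpolation parameter, and it is unclear which strip and which endpoint bounds you would feed in; moreover a naive modulus-times-argument split (e.g.\ Cauchy--Schwarz between $|f_z|^{2\re\beta}$ and $e^{-2\im\beta\arg f_z}$) only reaches a rectangle strictly inside the ellipse, not the full critical region, so some genuinely new mechanism --- precisely the one you leave as a black box --- is indispensable.

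Two further divergences. First, you propose to feed in Astala's area-distortion theorem as the base-case $L^p$ bound, but the paper deliberately starts from only the classical $L^2$ area theorem applied to the analytic family $\Phi_\lambda=(1+\alpha_\lambda)f^\lambda_z$: the interpolation lemma then does all the upgrading, and Astala's higher integrability drops out as the special case $\beta\in\R$ rather than being an input (so using it as input is both unnecessary and mildly circular). Second, you guess that the hard lemma is a ``pointwise algebraic/convexity inequality'' for a complex Burkholder integrand to be verified by direct computation or plurisubharmonicity; in fact the weighted Burkholder estimate (Theorem \ref{main}) is a \emph{consequence} of the interpolation lemma together with a tailored holomorphic variation $\mu_\lambda$ and the elementary comparison $J(z,f^\lambda)/|\Phi_\lambda|^2\ge\omega(z)$ --- the order of logic is the reverse of what you propose. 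In short, you have correctly named the players (holomorphic motion, $L^2$ starting point, Burkholder functional, ellipse of exponents), but the one new argument on which the theorem rests --- the Jensen-plus-Schwarz interpolation with complex exponents --- is not supplied, and the substitute you suggest does not obviously close the gap at the sharp boundary.
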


This result is sharp in a strong sense; it fails for any complex exponent $\beta \,$ on the boundary of the critical ellipse as well as outside, see Section \ref{se:burkholder}.

Theorem \ref{complexintegrability1} includes a number of interesting special cases.  Taking $\beta$ real-valued we recover the optimal higher integrability of the gradient of a quasiconformal mapping \cite{As}. Other values of $\beta$  lead to new phenomena: among them is the optimal exponential integrability of the argument of $f_z$.

\begin{corollary}\label{cor:argumentintegrability-intro} Suppose  $f$ is a $K$-quasiconformal map on a domain $\Omega\subset\C$. Then
$$
e^{b |\arg f_z|}\in L^1_{loc}\quad {\rm for \; all}\;\;{\rm positive }\;\;   b< \frac{4K}{K^2-1}.
$$
\end{corollary}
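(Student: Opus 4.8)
The plan is to deduce the corollary directly from Theorem \ref{complexintegrability1} by feeding in \emph{purely imaginary} exponents. First I would note that since $f$ is $K$-quasiconformal its Jacobian $J_f=|f_z|^2-|f_{\bar z}|^2$ is positive almost everywhere, so $f_z\neq 0$ a.e., and hence $\log f_z$ — and therefore $\arg f_z$ — is well defined a.e.\ in the sense fixed in Section \ref{se:pre}. For a real parameter $t$ and $\beta=it$ one then has the pointwise identity
\[
\bigl|f_z^{\,it}\bigr| \;=\; \bigl|e^{\,it(\log|f_z|+i\arg f_z)}\bigr| \;=\; e^{-t\,\arg f_z}\qquad\text{a.e.}
\]
Thus everything reduces to identifying for which $t$ the exponent $\beta=it$ lies inside the critical ellipse \eqref{aito37}.

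Second, I would carry out the (elementary) computation locating $\beta=it$ relative to \eqref{aito37}. Writing $s=|t|\ge 0$ and $R:=2\,\frac{K+1}{K-1}>2$, the condition $|it|+|it-2|<2\,\frac{K+1}{K-1}$ reads $s+\sqrt{s^2+4}<R$. Since the left-hand side is increasing in $s$ and equals $2<R$ at $s=0$, squaring the equation $\sqrt{s^2+4}=R-s$ gives that this holds exactly for $s<\frac{R^2-4}{2R}=\frac{R}{2}-\frac{2}{R}=\frac{K+1}{K-1}-\frac{K-1}{K+1}=\frac{4K}{K^2-1}$. Hence Theorem \ref{complexintegrability1} applies to $\beta=it$ precisely when $|t|<\frac{4K}{K^2-1}$, yielding $e^{-t\,\arg f_z}\in L^1_{\loc}(\Omega)$ for every such $t$.

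Finally, I would fix $b$ with $0<b<\frac{4K}{K^2-1}$, apply the previous step to $t=b$ and to $t=-b$ to get $e^{b\,\arg f_z}\in L^1_{\loc}$ and $e^{-b\,\arg f_z}\in L^1_{\loc}$, and conclude via the pointwise bound $e^{b\,|\arg f_z|}\le e^{b\,\arg f_z}+e^{-b\,\arg f_z}$. All of this is routine; the only point that genuinely requires care is the interpretation of $\arg f_z$, which must be the branch of $\log f_z$ supplied by the holomorphic-flow construction of Section \ref{se:pre} (in particular not the bounded principal branch, which would make the statement vacuous). With that convention the entire content sits in Theorem \ref{complexintegrability1}, so I do not anticipate any further obstacle.
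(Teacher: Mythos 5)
Your proposal is correct and takes essentially the same route as the paper: Corollary \ref{cor:argumentintegrability-intro} is obtained from Theorem \ref{complexintegrability1} by specializing to purely imaginary exponents $\beta=it$, computing $\lvert f_z^{it}\rvert=e^{-t\arg f_z}$, and locating where the segment $\{it\}$ meets the critical ellipse, which gives exactly $|t|<\frac{4K}{K^2-1}$. The paper leaves this computation implicit; you have simply written it out, including the correct final step via $e^{b|\arg f_z|}\le e^{b\arg f_z}+e^{-b\arg f_z}$ and the (important) remark that $\arg f_z$ must be the branch coming from the holomorphic-flow/geometric construction rather than a bounded principal branch.
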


Again the integrability fails for some $K$-quasiconformal mapping in case  $b= \frac{4K}{K^2-1}$.
Moreover, we have similar  precise bounds on exponential integrability  %of derivatives  of 
for bilipschitz mappings as well, see Theorem \ref{thm:exparg} below. 

\medskip
To understand the mechanisms of how the integrability breaks down at the borderline case in Theorem \ref{complexintegrability1}, we analyze the situation in a weighted setting.
This reveals connections with Burkholder functionals and raises new questions regarding quasiconvexity. The novelty here lies in creating a  family of Burkholder-type functionals depending on a complex parameter,  for which we establish partial quasiconcavity, see Theorem \ref{main}. In the course of doing so we bring into play Lebesgue spaces with complex exponents to advance in this setting the interpolation technique of \cite{AIPS}.
For details see Section \ref{se:burkholder}. %, and Lemma \ref{le:interpolation} in particular. 

The above approach allows a natural extension %of the results 
 to  multifractal properties  of quasiconformal mappings. Even further,  in view of \cite{MSS}, \cite{Slod} one may consider  general holomorphic motions of subsets $E \subset \C$, that is,   maps $\Psi: \DD \times E \to \C$  injective in the $z$-variable and  holomorphic in  the $\lambda$-variable, with $\Psi(0,z) \equiv z$ at the ``time'' $\lambda = 0$.
%$z \to \Phi(\lambda, z)$ is injective,  $\lambda \to \Phi(\lambda, z)$ holomorphic a$  . Recall that a holomorphic motion of a subset
Now, however, one needs to take into account also the stretching in a manner to be discussed  in  Section \ref{infit}. We arrive at  general bounds for the \emph{joint} rotational and stretching multifractal spectra.

\begin{theorem} \label{holodim}
Suppose  $\Psi: \DD \times E \to \C$ is a holomorphic motion of a set $E \subset \C$ and  that $\alpha > 0$ and $\gamma \in \R$ are given.  

If $\lambda \in \DD$, assume that at every  
point $z \in E$ we have  scales $r_j \to 0$ along which $\Psi_\lambda(z)=\Psi(\lambda,z)$  stretches with exponent $\alpha$,
$$ \lim_{j\to\infty}\frac{\log |\Psi_\lambda(z+r_j)-\Psi_\lambda(z)|}{\log r_j} =  \alpha, \qquad z \in E,$$ 
 and simultaneously rotates with rate $\gamma$, 
$$ \lim_{j\to\infty}\frac{\arg (\Psi_\lambda(z+r_j)-\Psi_\lambda(z))}{\log |\Psi_\lambda(z+r_j)-\Psi_\lambda(z)|}= \gamma,  \qquad z \in E.$$
Then 
\begin{equation}
\label{dimbound2}
\dim(E)\;  \leqslant \;   1+\alpha \,-\, \frac{1}{|\lambda|} \sqrt{(1-\alpha)^2+(1-|\lambda|^2)\alpha^2 \gamma^2}.
\end{equation}
\end{theorem}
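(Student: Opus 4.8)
The plan is to reduce Theorem \ref{holodim} to the borderline integrability established in Theorem \ref{complexintegrability1} via a dimension-distortion argument applied along the holomorphic motion. First I would invoke the $\lambda$-lemma and Slodkowski's theorem to extend $\Psi$ to a holomorphic motion of all of $\C$, so that for each fixed $\lambda\in\DD$ the map $\Psi_\lambda$ is $K$-quasiconformal with $\frac{K-1}{K+1}=|\lambda|$; after a standard normalization (conformal at $\infty$, fixing a couple of points) we may assume $\Psi_\lambda$ is principal. The hypotheses say that on $E$ the map $g:=\Psi_\lambda$ has a prescribed logarithmic stretching rate $\alpha$ and rotation rate $\gamma$ along suitable scales $r_j\to0$; equivalently, writing the local behaviour as $|g(z+r)-g(z)|\approx r^{\alpha}$ with an argument twist $\arg(g(z+r)-g(z))\approx \gamma\alpha\log r$, the complex ``stretch'' of $g$ at $z$ behaves like $r^{\alpha(1+i\gamma)}$ along these scales.

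The key step is to choose a complex exponent $\beta$ cleverly and estimate the dimension of $E$ through a weighted covering argument. I would cover $E$ by a Whitney-type family of disks $D(z_i,r_i)$ with $r_i\to0$ chosen from the prescribed scales, so that $g(D(z_i,r_i))$ is comparable to a disk of radius $\approx r_i^{\alpha}$ but with the points inside carrying the argument twist. The Hausdorff content of $E$ in dimension $d$ is controlled by $\sum r_i^{d}$, and one wants to bound this by $\sum |g_z^\beta|$-type integrals over the $D(z_i,r_i)$, using Theorem \ref{complexintegrability1}: if $\beta$ lies in (or on the boundary of) the critical ellipse $|\beta|+|\beta-2|\le 2\frac{K+1}{K-1}$, then $|g_z^\beta|\in L^1_{loc}$ (on the boundary one uses a limiting / weak-type version, which is the technically delicate point). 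The relation $g_z\approx r_i^{\alpha(1+i\gamma)-1}$ on $D(z_i,r_i)$ turns $|g_z^\beta| r_i^2 \approx r_i^{2+\re[\beta(\alpha(1+i\gamma)-1)]}$, and matching the exponent of $r_i$ with $d$ forces
\begin{equation*}
d \;=\; 2 + \re\bigl[\beta\bigl(\alpha(1+i\gamma)-1\bigr)\bigr] \;=\; 2 - (\re\beta)(1-\alpha) - (\im\beta)\,\alpha\gamma.
\end{equation*}
We then optimize the right-hand side over all admissible $\beta=\beta_1+i\beta_2$ subject to the critical-ellipse constraint $|\beta|+|\beta-2|\le \frac{4}{|\lambda|}$ (using $\frac{K+1}{K-1}=\frac{1}{|\lambda|}$). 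Writing the ellipse in standard form, its boundary is $\{\beta: (\beta_1-1)^2/a^2 + \beta_2^2/b^2 = 1\}$ with semi-axes $a=2/|\lambda|$ and $b^2=a^2-1=(4-|\lambda|^2)/|\lambda|^2$; maximizing the linear functional $-\beta_1(1-\alpha)-\beta_2\alpha\gamma$ over this ellipse (center $\beta=1$) contributes $-(1-\alpha)$ from the center plus the support-function term $\sqrt{a^2(1-\alpha)^2 + b^2\alpha^2\gamma^2}$, which after substituting $a,b$ and simplifying gives exactly $\frac{1}{|\lambda|}\sqrt{(1-\alpha)^2 + (1-|\lambda|^2)\alpha^2\gamma^2}$. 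Hence $d = 2-(1-\alpha) + \ldots$ wait — collecting: $d = 1+\alpha - \frac{1}{|\lambda|}\sqrt{(1-\alpha)^2+(1-|\lambda|^2)\alpha^2\gamma^2}$, which is \eqref{dimbound2}.

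I expect the main obstacle to be making the covering argument rigorous at the \emph{borderline} of the critical ellipse, since there $|g_z^\beta|$ is only in weak-$L^1$ (or barely fails $L^1$), so one cannot simply sum $L^1$ norms. The standard fix is to run the estimate with $\beta$ slightly inside the ellipse and let it tend to the boundary, or to use the area distortion / Astala-type machinery directly with the optimal weak bounds; this is exactly the point where one must be careful that the ``local'' stretching and rotation exponents, which hold only along a subsequence of scales $r_j$ and only as limits (not uniformly over $z\in E$), can still be fed into a global covering estimate. One handles this by a stopping-time / Egorov argument to pass to a subset of $E$ of almost full dimension on which the limits are uniform, together with a Besicovitch-type covering at a common scale; the measure-zero exceptional set does not affect the Hausdorff dimension bound. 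The remaining ingredients — the reduction from a general holomorphic motion to a normalized quasiconformal map, and the elementary optimization over the ellipse — are routine once the integrability input is in hand.
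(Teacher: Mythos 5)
Your approach is essentially the same as the paper's: extend $\Psi$ to a motion of all of $\C$ via Slodkowski's theorem, apply the $\lambda$-lemma to obtain $K(\Psi_\lambda)\le\frac{1+|\lambda|}{1-|\lambda|}$, and then feed this into the quasiconformal joint multifractal bound. The paper cites Theorem~\ref{th:multifractal} at this point; you re-derive that theorem via the covering argument, but the mechanism — the local estimate from Theorem~\ref{co:complexintegrability}, a Vitali cover at adapted scales, $\sum r_i^{2+\re[\beta(\alpha(1+i\gamma)-1)]}\lesssim\int|f_z^\beta|$, then infimization over $\beta$ in the critical ellipse — is exactly what happens in the proof of Theorem~\ref{th:multifractal}.

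Two small remarks. First, there is an arithmetic slip: the critical ellipse is $|\beta|+|\beta-2|<\tfrac{2}{|\lambda|}$ (since $2\cdot\frac{K+1}{K-1}=\frac{2}{k}=\frac{2}{|\lambda|}$), not $\tfrac{4}{|\lambda|}$; the semi-axes are $a=\tfrac{1}{|\lambda|}$ and $b=\tfrac{\sqrt{1-|\lambda|^2}}{|\lambda|}$. With your stated $a=2/|\lambda|$ and $b^2=(4-|\lambda|^2)/|\lambda|^2$, the support function does \emph{not} simplify to $\tfrac{1}{|\lambda|}\sqrt{(1-\alpha)^2+(1-|\lambda|^2)\alpha^2\gamma^2}$, even though your final answer is right; with the corrected $a,b$ it does. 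Second, the borderline of the ellipse is a non-issue and needs no weak-$L^1$ substitute: you run the covering argument for each $\beta$ strictly inside the open ellipse (where $|f_z^\beta|\in L^1_{\loc}$ and, crucially, the two-sided local estimate~\eqref{loc10} of Theorem~\ref{co:complexintegrability} is available), obtain the upper bound $2+(\alpha-1)\re\beta-\alpha\gamma\im\beta$ for each such $\beta$, and then take the infimum; by continuity of the linear functional the infimum over the open ellipse equals the infimum over its closure. Similarly, the lack of uniformity of the scales $r_j$ over $z\in E$ is handled directly by the Vitali covering at level $\varepsilon$ (each $z$ donates a single ball at its own scale), so no Egorov-type reduction is needed.
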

\smallskip

Moreover, for each $\lambda \in \DD$, $\alpha > 0$ and $ \gamma \in \R$ such that the right hand side of  \eqref{dimbound2} is nonnegative, there exists a set $E \subset \C$ and its holomorphic motion for which we have equality in \eqref{dimbound2}.

Holomorphic motions and the study of their geometric properties arise naturally in various questions in  complex dynamics. It is clear that detailed combinatorial or topological information about specific dynamical systems, combined with the methods of Theorem \ref{holodim}, will improve the bounds above.

The proofs of the statements in this introduction can be found in the text as follows.
Theorem \ref{complexintegrability1} and Corollary \ref{cor:argumentintegrability-intro} are proved in Section \ref{se:burkholder}. Theorem \ref{holodim} and further quasiconformal multifractal spectra estimates are treated in Section \ref{se:multifractal}. Finally, Theorem \ref{1.1} is obtained by developing the bilipschitz theory in Section \ref{se:bi-Lipschitz}. 
In Section \ref{se:rotation} we build up the basic framework for discussing rotational phenomena through rigorous definitions of various branches of the logarithms involved. Section \ref{se:pre} contains prerequisites on bilipschitz and quasiconformal mappings.

\medskip
The above and further results in the present work as well as of other authors reflect a close interaction between rotational phenomena for bilipschitz mappings and stretching phenomena for quasiconformal mappings. This may be summarized as the \emph{dictionary} of Table \ref{table:qcvsbilip} between the two, see Section \ref{se:bi-Lipschitz} for a discussion.

{
\setlength{\tabcolsep}{10pt}
\renewcommand{\arraystretch}{1.2}

\bigskip
\begin{table}[htbp]
\begin{center}
\begin{tabular}{|c|c|}
\hline
{\bfseries $K$-quasiconformal stretching}  & {\bfseries $L$-bilipschitz rotation} \\ \hline \hline
Gr\"otzsch problem & John's problem \\ \hline
radial stretching & logarithmic spiral map\\ 
$z |z|^{\alpha-1}$ & $z |z|^{i \gamma}$ \\ \hline
H\"older exponent & rate of spiralling \\
$1/K \leqslant \alpha \leqslant K$ & $|\gamma| \leqslant L-1/L$ \\ \hline
$\log J(z,f) \in BMO$ & $\arg f_z \in BMO$ \\ \hline
higher integrability & exponential integrability \\
$f \in W^{1,p}_{loc}, \, p <\frac{2K}{K-1}$ & $\exp(b |\arg f_z|) \in L^1_{loc}, \, b <\frac{2L}{L^2-1}$ \\ \hline
multifractal spectrum & multifractal spectrum \\
 $\dim_H \{z : \alpha(z)=\alpha\} \leqslant 1+\alpha-\frac{|1-\alpha|}{k}$ & $\dim_H \{w: \gamma(w)=\gamma\} \leqslant 2-\frac{2L}{L^2-1}\, |\gamma|$ \\ \hline
 factoring the radial stretch map  & factoring the logarithmic spiral map \\
 along a geodesic & along a horocycle \\ \hline
\end{tabular}
\end{center}
\bigskip
\caption{Quasiconformal stretching versus Bilipschitz rotation}
\label{table:qcvsbilip}
\end{table}
}

\section{Prerequisites}\label{se:pre}

 Before considering 
  %the most useful approaches  to 
the   rotational and  stretching properties of  bilipschitz and
quasiconformal mappings,  we briefly recall  the basic concepts underlying our study.
%basics on bilipschitz and quasiconformal mappings  .

\subsection{Bilipschitz and Quasiconformal Maps}\label{bilipqc}

 By definition, in any dimension $n \geqslant 2$, $K$-quasiconformal mappings are orientation preserving homeomorphisms $f:\Omega \to \Omega'$ between domains $\Omega, \Omega' \subset \R^n$, contained  in the Sobolev class $\mathscr W^{1,n}_{loc}(\Omega)$, for which  the differential matrix   and its determinant are coupled in the distortion inequality,
\begin{equation}\label{distortion}
 |D\!f(x)|^n  \leqslant K\, \det D\!f(x)\;,\quad \textrm{where}\;\;\;|D\!f(x)|  = \max_{|\xi| =1} \; |D\!f(x) \xi|,
\end{equation}
for some  $K \geqslant1$.
From now on, we will consider  these mappings only in dimension $n=2$. 
It is clear that any orientation preserving $L$-bilipschitz mapping is $K$-quasi\-conformal with $K= L^2$.

In comparison, any quasiconformal mapping of the entire plane is {\it quasisymmetric}, i.e. satisfies the estimate
\begin{equation}
\label{qsymmetry}
\frac{|f(x) - f(z)|}{|f(y) - f(z)|} \leqslant \eta\left( \frac{|x-z|}{|y-z|} \right), \qquad x, y, z \in \C,
\end{equation}
for some continuous strictly increasing $\eta:\R_+ \to \R_+$ with $\eta(0)=0$. We will make frequent use of this geometric description. Conversely, any quasisymmetric mapping is $K$-quasiconformal with $K = \eta(1)$.

What makes quasiconformal mappings particularly flexible in dimension $n=2$ is that they satisfy the Beltrami equation
\begin{equation}\label{eq:beltrami2}
f_{\bar z} = \mu f_z\quad {\rm  with}\quad |\mu| \leqslant k < 1, \qquad k = \frac{K-1}{K+1}.
\end{equation} 
%where $$
%\partial f = f_z= \frac{\partial f}{\partial z} = 
%\frac{1}{2} \left(\frac{\partial f}{\partial x} - i \frac{\partial f}{\partial y}\right )\; \;\;\;
%\textrm {and}\;\;\;\;  \bar{\partial} f = f _{\bar z} = \frac{\partial f}{\partial \bar z} =
% \frac{1}{2} \left(\frac{\partial f}{\partial x} + i 
%\frac{\partial f}{\partial y}\right)   $$
Conversely, any homeomorphic $ W^{1,2}_{loc}$-solution to \eqref{eq:beltrami2} is $K$-quasiconformal with  $K=(1+k)/(1-k).$ We refer to
\cite{AIMb} for definitions and basic facts on planar quasiconformal maps.

If the coefficient  $\mu$ is  compactly supported, the mapping $f$ is conformal near $\infty$ and we may use normalisation  $f(z) =  z+o(1)$  as $z\to\infty $. In this case we call $f$ the {\it principal solution} to \eqref{eq:beltrami2}; such a solution is uniquely determined by the  Beltrami coefficient $\mu(z)$ and it is a homeomorphism.
The Beltrami equation paves the way for embedding  the  principal solution $f$ into a holomorphic family of quasiconformal maps. One may simply  consider the flow of principal solutions $\{f^\lambda(z)\}_{\lambda \in \DD}$ to the equation
\begin{equation}
\label{eq:flow}
f_{\bar z} = \mu_\lambda f_z, \qquad \mu_\lambda(z) = \lambda \mu(z) / \| \mu \|_{\infty}, \quad f=f^\lambda.
\end{equation}
Similarly,  the global solutions to \eqref{eq:flow} normalized by $f(0)=0, \; f(1)=1$  depend holomorphically on $\lambda \in \DD$. More generally, within both normalizations we achieve  the holomorphic dependence as soon as $\mu_\lambda(z)$ varies holomorphically with the parameter $\lambda$. For details and further facts on holomorphic dependence see \cite[Section 5.7]{AIMb}.

\subsection{Choosing the logarithmic branches}\label{se:logarithmic branches}

In describing the different aspects of rotation under quasiconformal mappings we will need a convenient and systematic way to discuss the various branches of the logarithm of the difference $f(z)-f(w)$. The following simple observation  serves best our purposes.

\begin{proposition} \label{branch5}
Let $f:\Omega \to \mathbb R^2$ be a homeomorphism of a simply connected domain $\Omega \subset \mathbb R^2\,$.
 Assume that  $f$ is differentiable at a point $w_0 \in \Omega\,$ with positive Jacobian. Then the logarithmic expression
 \vspace{-.1cm}

 \begin{equation}
\label{logs4}
\qquad \; \log \frac{f(z)-f(w)}{z-w}\,, \qquad %\mbox{defined for }
(z,w) \in  \Omega \times \Omega, \quad  z \neq w,% \setminus \Delta,
\end{equation}
 \smallskip
 \vspace{-.4cm}

\noindent admits a single-valued continuous branch.

Moreover, any continuous branch is uniquely determined by its value at any given pair $(z,w) \in \Omega \times \Omega$, $z\neq w$.
\end{proposition}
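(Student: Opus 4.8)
The plan is to read the statement as the existence of a continuous logarithm of the continuous, nowhere‑vanishing function
\[
F(z,w):=\frac{f(z)-f(w)}{z-w},\qquad (z,w)\in X:=(\Omega\times\Omega)\setminus\{z=w\},
\]
the non‑vanishing coming precisely from injectivity of the homeomorphism $f$, and then to reduce this to a winding‑number computation. I would first record that $X$ is a connected open subset of $\R^4$, so the covering‑space lifting criterion applies to the exponential $\exp\colon\C\to\C\setminus\{0\}$: a single‑valued continuous branch of $\log F$ exists if and only if the induced homomorphism $\pi_1(X)\to\pi_1(\C\setminus\{0\})\cong\Z$ is trivial, i.e.\ if and only if $F\circ\sigma$ has winding number $0$ about the origin for every loop $\sigma$ in $X$. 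The uniqueness clause I would dispose of at once and separately: two continuous branches differ by a continuous $2\pi i\Z$‑valued function on the connected set $X$, hence by a constant, and that constant is $0$ as soon as the branches agree at a single pair $(z,w)$.

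For the existence, take a loop $\sigma(t)=(z(t),w(t))$ in $X$. Additivity of winding numbers of $(\C\setminus\{0\})$‑valued loops yields
\[
n(F\circ\sigma)=n\bigl(t\mapsto f(z(t))-f(w(t))\bigr)-n\bigl(t\mapsto z(t)-w(t)\bigr),
\]
where $n(\cdot)$ is the winding number about $0$ and both loops on the right do lie in $\C\setminus\{0\}$ (the first by injectivity of $f$). In other words the homomorphism $F_*\colon\pi_1(X)\to\Z$ that we must show vanishes is the difference of the ones induced by $(z,w)\mapsto f(z)-f(w)$ and $(z,w)\mapsto z-w$; since the target is abelian, it is enough to check that $F_*$ kills a generating set of $\pi_1(X)$. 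This is where simple connectedness of $\Omega$ comes in. I would use the projection $p\colon X\to\Omega$, $p(z,w)=w$, which is a fibre bundle with fibre $\Omega\setminus\{w\}$ (a special case of the Fadell--Neuwirth configuration‑space fibrations for surfaces): its long exact homotopy sequence together with $\pi_1(\Omega)=0$ shows that $\pi_1(\Omega\setminus\{w_0\})\to\pi_1(X)$ is surjective, and $\pi_1(\Omega\setminus\{w_0\})\cong\Z$ is generated by a small positively oriented circle around $w_0$; hence $\pi_1(X)$ is generated by the single loop $\sigma_0(t)=(w_0+\varepsilon e^{2\pi i t},\,w_0)$, for any $\varepsilon>0$ small enough that $\{\,|\zeta-w_0|\le\varepsilon\,\}\subset\Omega$. (One could avoid fibre bundles: a loop in $X$ bounds a disc in the simply connected $\Omega\times\Omega$; making that disc transverse to the diagonal exhibits the loop as a product of conjugates of meridian loops around the diagonal, each homotopic in $X$ to $\sigma_0^{\pm 1}$.)

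It then remains to compute $n(F\circ\sigma_0)$, and this is the single place the differentiability hypothesis is used. Along $\sigma_0$ the denominator $z-w=\varepsilon e^{2\pi i t}$ has winding number $1$. For the numerator, differentiability at $w_0$ gives $f(w_0+h)-f(w_0)=D\!f(w_0)h+o(|h|)$, where the real‑linear map $D\!f(w_0)$, written as $h\mapsto ah+b\bar h$ with $a,b\in\C$, has $|a|^2-|b|^2=\det D\!f(w_0)>0$, so it is invertible and orientation preserving and $|a|>|b|$. The auxiliary loop $t\mapsto D\!f(w_0)[\varepsilon e^{2\pi i t}]=\varepsilon(ae^{2\pi i t}+be^{-2\pi i t})$ parametrises an ellipse once counterclockwise, hence has winding number $1$ (factor out $\varepsilon a e^{2\pi i t}$; the remaining factor $1+(b/a)e^{-4\pi i t}$ stays in the disc about $1$ of radius $|b/a|<1$, which omits the origin), and it obeys $|D\!f(w_0)[\varepsilon e^{2\pi i t}]|\ge\varepsilon(|a|-|b|)>0$. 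Consequently, for $\varepsilon$ small, the straight‑line homotopy from this auxiliary loop to $t\mapsto f(w_0+\varepsilon e^{2\pi i t})-f(w_0)$ never reaches $0$, so the numerator loop also has winding number $1$, and $n(F\circ\sigma_0)=1-1=0$. Thus $F_*\equiv 0$ and the branch exists.

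The step I expect to be the genuine obstacle is the topological one in the middle --- pinning down that $\pi_1$ of the complement of the diagonal in $\Omega\times\Omega$ is generated by meridian loops (whether through the Fadell--Neuwirth fibration or through a transversality argument), since that is exactly where the hypothesis on $\Omega$ is used. The concluding winding‑number computation, though it is where the hypothesis on $f$ enters, is routine.
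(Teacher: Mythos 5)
Your proof is correct, and your final computation of $n(F\circ\sigma_0)$ is essentially the same as the paper's (the paper phrases it as: near $w_0$ the quotient $\frac{f(z)-f(w_0)}{z-w_0}$ takes values in a disc $B(a,R)$ with $a=f_z(w_0)$, $R$ slightly larger than $|f_{\bar z}(w_0)|$, which misses the origin because $J(w_0,f)>0$; you split the quotient into numerator and denominator loops and use the ellipse traced by $D\!f(w_0)$, which is the same estimate $|a|>|b|$ in a different guise). Where you genuinely diverge is the topological reduction. The paper proceeds in two concrete steps: first, any loop in $\Omega\setminus\{w_0\}$ is homotoped to a loop near $w_0$; second, to pass from the slice $\{w=w_0\}$ to the full configuration space $(\Omega\times\Omega)\setminus\Delta$, it normalises $\Omega\cong\DD$, $w_0=0$, and writes down the completely explicit M\"obius-type homotopy
\[
(\alpha_s,\beta_s)=\Bigl(\frac{\alpha-s\beta}{1-s\bar\beta\alpha},\ \frac{(1-s)\beta}{1-s|\beta|^2}\Bigr),
\]
which pushes any off-diagonal loop into the slice while never touching the diagonal. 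You instead invoke the Fadell--Neuwirth fibration $F_2(\Omega)\to\Omega$ and its long exact sequence to conclude directly that $\pi_1(X)$ is cyclic, generated by the meridian $\sigma_0$; you also sketch a transversality alternative. Both reductions are valid. What the paper's route buys is complete self-containment --- one short explicit formula, no imported theorems, no smoothness or general-position arguments --- which fits a preliminaries section. What your route buys is conceptual economy: the fibration hands you $\pi_1(X)\cong\Z$ in one stroke and makes it transparent that the whole proposition is a statement about one generator, at the cost of citing the Fadell--Neuwirth theorem (or, in the transversality variant, of having to smooth a topological disc and justify general position in a 4-manifold, which is more work than it looks). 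Your separate one-line treatment of the uniqueness clause is also fine; the paper leaves it implicit.
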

\begin{proof}
Since $f:\Omega \to \mathbb R^2\,$ is differentiable at $w_0 \in \Omega$ and $J(w_0,f) = |f_z(w_0)|^2 - |f_{\bar z}(w_0)|^2 > 0$, the function
\begin{equation}
\label{logprod}
z \mapsto \frac{f(z)-f(w_0)}{z-w_0} %, \qquad z \in \C,
\end{equation}
is continuous and non-vanishing in $\Omega \setminus \{ w_0\}$. Its accumulation set at $z=w_0$ is contained in a closed disk $\bar B(a,r)$, centered at $ a = f_z(w_0)$ and of  radius $\,r = | f_{\bar z}(w_0)| \geqslant 0\,$.  By our assumptions on the Jacobian, $\,0\not\in B(a, R)$ as soon as $\,R >r$ is sufficiently close to $\,r\,$. If $\gamma$ is any loop in $\Omega\setminus\{ w_0 \}$, it is homotopic to a loop that lies arbitrarily close to $w_0$, i.e. to a loop whose image under $\,f\,$ lies in $\,B(a,R)\,$. Therefore \eqref{logprod} admits a single valued logarithm as a function of $z \in \Omega$.

In order to show that  $\; \log \frac{f(z)-f(w)}{z-w}\,$ admits a single-valued continuous branch in the whole domain  $U:=(\Omega \times \Omega) \setminus\{(w,w):  w \in \Omega\}$, it is  then enough to verify that any loop in $U$ is homotopic  to one that lies in the section
$U\cap \{ (z,w)\,:\, w=w_0 \}.$ Since $\Omega$ is simply connected, it is homeomorphic to the unit disc  $\DD$ and we just need to
consider the case $U=\DD \,$. Moreover, we may assume that $w_0=0.$ Given any loop $\,(\alpha,\beta):[0,1]\to \DD\times \DD\,$
that avoids the diagonal (that is, $\alpha(t) \neq \beta(t)\,\textnormal{for all}\, 0\leqslant t \leqslant 1\,$), the required homotopy is given by

\[
 (\alpha_s\,,\, \beta_s ) = \left(\,\frac{\alpha - s \beta}{1 - \,s \overline{\beta}\alpha }\,,\;\frac{(1 - s)\, \beta}{1 - \,s\, |\beta|^2} \right)\;\;,\;\;\;\;0\leqslant s \leqslant 1
\]
Indeed,  $\alpha_s(t) \neq \beta_s(t)\,$ for all $\, 0\leqslant t \leqslant 1\,$ and $\, 0\leqslant s \leqslant 1\,$.
 \end{proof}

\begin{remark}\label{rem:principal}
In most cases that we will encounter there is a natural choice for the branch of the logarithm \eqref{logs4}. For instance, if $f:\C \to \C$ is normalized by $f(0)=0, \; f(1)=1$ we will choose
$\log f(1)=\log \big((f(1)-f(0)/(1-0)\big) =0$. Or if $f:\C \to \C$ is a principal solution to \eqref{eq:beltrami2} with  $f(z)= z+o(1)$   as $z\to\infty$, then we consider the continuous branch with
\begin{equation}
\label{branch}
\log \frac{f(z)-f(w)}{z-w} \;  \to \;  0 \qquad \mbox{as } z\to\infty. 
\end{equation}
and call it the  {\it principal branch}.
\end{remark}

\begin{remark}\label{rem:flow}
Later on, we  also need to consider  {\it flows of homeomorphisms} $f^\lambda:\C\to\C$  that depend continuously on a complex parameter $\lambda\in\DD .$ Proposition \eqref{branch5} generalizes to this situation, and there is a single-valued continuous  branch of the logarithm
\smallskip
$$
 \log \frac{f^\lambda(z)-f^\lambda(w)}{z-w} \quad {\rm defined \; for }\quad 
(z,w) \in  \C \times \C, \quad  z \neq w,\quad {\rm and}\;\; \lambda\in\DD.
$$
In order to prove this one observes that any loop $\gamma=(\gamma_1,\gamma_2,\gamma_3)$ in the domain  $$\big((\C \times \C)\setminus \{(z,w)\, : \,z=w\}\big)\times \DD$$ is homotopic to a curve lying in 
$\big((\C \times \C)\setminus \{(z,w)\, : \,z=w\}\big)\times \{ 0\}$
by the trivial homotopy $(t,s)\mapsto (\gamma_1(t),\gamma_2(t),(1-s)\gamma_3(t)).$ This reduces one to the case that was already handled in the proof of Proposition \eqref{branch5}. Naturally the branch is uniquely determined by its value at any $(z,w,\lambda)$ with $z\not= w.$
   
\end{remark}

\section{Notions of rotation}\label{se:rotation}

We start with the pointwise notions, i.e. describe the extremal behavior and the optimal bounds for the rotation and stretching at a point $z_0 \in \C$. 

There are (at least)  three natural different and geometric ways to describe the rotational properties under a planar mapping:  the infinitesimal, the local and the global concepts. 
They have different and complementary geometric and analytic descriptions, but we shall see later that they are all intimately related. These relations will then form the basis of the quasiconformal and bilipschitz multifractal properties  studied in Sections \ref{se:multifractal} and \ref{se:bi-Lipschitz}.

We begin with the local point of view, where one fixes the argument at a chosen pair of points $z_0 \neq z_1 \in \C$, and then studies the behavior of the logarithmic cross ratio 
\[ \log \left(\frac{f(z) - f(z_0)}{f(z_1) - f(z_0)}\right), \qquad \mbox{ as } \;  z \to z_0, \quad z \neq z_0. \]
The scale invariance of this expression allows universal distortion bounds.

As a next step, the local bounds enable one to study  the geometric rate of  spiraling for  the  image of an infinitesimal line segment; for  precise definitions see  \eqref{model12} - \eqref{modelg} and Theorem \ref{localrotstret}.

Finally, for principal mappings; that is, when $f(z) = z+o(1)$   as $z\to\infty $, we have the third possibility of taking $z= \infty$ as the reference point. Here it is natural to travel from  $\infty$ to $z_0$  along a line segment and study  how much the image rotates  around $f(z_0)$.

\subsection{Rotation and Stretching: The local picture}\label{se:localrotation} %bšš

By the classical  Mori's theorem, a $K$-quasi\-conformal mapping $f$ is locally H\"older continuous with exponent $1/K$. We look for a similar bound on the maximal change in argument. It turns out that a fruitful point of view is to measure twisting and stretching simultaneously, and this is one of the leading insights  in what follows. By scale invariance it makes no difference to take $z_0 = 0$ and $z_1 = 1$ and assume $f(0) =0, f(1) = 1$.

%We start with the following global and quantitative  version.

\begin{theorem} \label{ptwise}
Consider a $K$-quasiconformal mapping $f:\C \to \C$ normalized by $f(0)=0$ and $f(1) = 1$. Then for any 
$0  < r < 1$,
\begin{equation}
\label{logs}
 \left| \, \log f(r)\, - \,\frac12 \left(K+\frac{1}{K} \right) \,  \log r \right| \leqslant \frac12 \left(K-\frac{1}{K} \right)\, \log \frac{1}{r} + c(K),
 \end{equation}
 \vspace{.002cm}

\noindent where the constant $c(K)$ is independent of $\;r$. Here the continuous branch of the logarithm $\log f(r)$ is determined by $\log f(1) = 0$.
% \vspace{.3cm}

Furthermore, for any $0 < r < 1$ and for any number $\tau$ on the circle 
%\begin{equation*} \label{eq:hypdisk}
%\left| \, \tau \; - \;  \frac{1+k^2}{1-k^2}\,  \right|\;  \leqslant  \; \frac{2k}{1-k^2}\, ,  \qquad k = \frac{K-1}{K+1}\, .  
%\end{equation*}

\begin{equation}
\label{etas}
\left| \, \tau -  \,\frac12 \left(K+\frac{1}{K} \right) \right| = \frac12 \left(K-\frac{1}{K} \right)
 \end{equation}
 \vspace{.002cm}

\noindent  the equality in \eqref{logs} holds with $c(K) = 0$ for the  
%model map \eqref{eq:gammaalpha}, i.e. for the  
$K$-quasiconformal mapping 
\begin{equation}
\label{eq:gammaalpha}
f_\tau(z) = \frac{z}{|z|} |z|^{\tau}, \qquad \tau = \alpha(1+i\gamma). 
 \end{equation}
\end{theorem}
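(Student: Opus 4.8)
\textbf{Proof plan for Theorem \ref{ptwise}.}

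The plan is to deduce the two-sided bound \eqref{logs} from the quasisymmetry estimate \eqref{qsymmetry} together with a Koebe-type distortion control coming from the fact that $f$ is $K$-quasiconformal. First I would observe that it suffices to prove a bound of the shape $|\log|f(r)| - \tfrac12(K+K^{-1})\log|r|\,|\,\le$ (lower order), i.e. control of the \emph{modulus}, because the rotational part $\arg f(r)$ can then be pinned down by a separate argument: the real part of the displayed inequality \eqref{logs} is exactly the classical Mori/Gr\"otzsch two-sided H\"older bound $|r|^{K}\lesssim |f(r)| \lesssim |r|^{1/K}$ (valid for $0<r<1$ with the normalization $f(0)=0,f(1)=1$), which I would quote from \cite{AIMb}; and the imaginary part, i.e. the bound on $\arg f(r)$, is a rotation estimate that follows from applying the modulus bound not to $f$ but to a holomorphically rotated companion of $f$. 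Concretely, for $\theta\in\R$ one considers the $K$-quasiconformal map obtained by post-composing with $w\mapsto w|w|^{i\theta}$ (or, more precisely, by running the Beltrami coefficient through the holomorphic flow \eqref{eq:flow} and using an appropriate value of the parameter), and the modulus estimate for that map, after optimizing over $\theta$, yields precisely the circle \eqref{etas} as the locus of attainable normalized logarithmic derivatives. This "rotate, then estimate the modulus" device is the standard way the argument gets folded into the same inequality as the stretching, and it is the conceptual heart of the statement.

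The key steps, in order, would be: (1) normalize and reduce to $0<r<1$, recording that the branch of $\log f(r)$ is the one with $\log f(1)=0$; (2) prove the real-part inequality, namely $\frac12(K+K^{-1})\log\frac1r - \frac12(K-K^{-1})\log\frac1r - c(K)\le \log|f(r)| \le \frac12(K+K^{-1})\log\frac1r + \frac12(K-K^{-1})\log\frac1r + c(K)$, which collapses to $\frac1K\log\frac1r - c \le \log\frac1{|f(r)|}\le K\log\frac1r + c$; this is Mori's theorem in two-sided form, obtained from quasisymmetry \eqref{qsymmetry} by iterating on the dyadic scales $r=2^{-n}$ and using that $\eta$ is the distortion function of a $K$-quasiconformal map, so that on each scale the modulus changes by a controlled multiplicative factor with exponents $K$ and $1/K$; (3) for the imaginary part, introduce the rotated family $f_\theta := R_\theta\circ f$ where $R_\theta$ denotes the logarithmic spiral stretch, note $f_\theta$ is $K_\theta$-quasiconformal with $K_\theta$ controlled, apply step (2) to $f_\theta$, and read off that the vector $\big(\log|f(r)|,\arg f(r)\big)/\log\frac1r$ lies, up to the additive error $c(K)$, inside the disk bounded by the circle \eqref{etas}; (4) verify the sharpness claim by direct computation for $f_\tau(z)=\frac{z}{|z|}|z|^{\tau}$ with $\tau=\alpha(1+i\gamma)$: here $\log f_\tau(r) = \tau\log r$, so the left side of \eqref{logs} equals $|\tau - \frac12(K+K^{-1})|\cdot\log\frac1r$, and one checks that $f_\tau$ is $K$-quasiconformal precisely when $|\tau - \frac12(K+K^{-1})| \le \frac12(K-K^{-1})$, with equality on \eqref{etas}, by computing $f_{\bar z}/f_z$ for this explicit map and reading off $|\mu| = \frac{|\tau-1|}{|\tau+1|}\cdot(\text{something})$ — in fact the clean computation gives that the distortion of $z|z|^{\tau-1}$ depends only on $\tau$ through an elementary expression, and setting it equal to $K$ carves out exactly the circle.

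The main obstacle I anticipate is step (3): making the "rotate and estimate" reduction fully rigorous requires knowing that post-composing (or conjugating the Beltrami coefficient) by the spiral map keeps the quasiconformality constant under control \emph{uniformly}, and extracting the optimal circle \eqref{etas} rather than a suboptimal disk demands that the modulus bound from step (2) be sharp with the correct constants $K$ and $1/K$ in the exponents (not merely some $K'$). Equivalently, one must show that the extremal configurations for stretching and for twisting are governed by the \emph{same} extremal map $f_\tau$, and that the family of such $f_\tau$ sweeps out the whole circle \eqref{etas} as $\tau$ runs over it — this is where the explicit computation in step (4) is not just a sharpness check but actually feeds back to confirm that the disk obtained in step (3) cannot be enlarged. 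A secondary technical point is bookkeeping the branch of the logarithm consistently through the composition with $R_\theta$, which is handled by Proposition \ref{branch5} and Remark \ref{rem:principal} since all maps in sight are normalized homeomorphisms of the plane; I would invoke those rather than re-doing the homotopy argument.
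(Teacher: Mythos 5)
Your step~(3) has a genuine gap that the rest of the plan does not repair. Post-composing with a spiral map $R_\theta(w)=w|w|^{i\theta}$ leaves the modulus unchanged, since $|w|^{i\theta}$ is unimodular; so $|R_\theta\circ f(r)|=|f(r)|$, and the Mori-type modulus estimate applied to the rotated companion yields \emph{no} information about $\arg f(r)$ that you did not already have from $f$ itself. Worse, $R_\theta\circ f$ is only $K(R_\theta)K$-quasiconformal, so Mori applied to it is strictly weaker than Mori applied to $f$. Your parenthetical alternative, ``running the Beltrami coefficient through the holomorphic flow,'' is the right ingredient, but it is not the same operation as post-composition with $R_\theta$, and one cannot extract the sharp circle \eqref{etas} by ``optimizing over $\theta$'' a family of real one-variable modulus bounds: the inequality \eqref{logs} bounds the Euclidean norm of a two-dimensional vector, which is stronger than the conjunction of a bound on the real part and a bound on the imaginary part. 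You cannot assemble a Euclidean disk from independent one-coordinate estimates without losing constants.

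The paper's proof avoids this entirely and is worth internalizing as the conceptual heart of the argument. One embeds $f$ in the holomorphic flow $\{f^\lambda\}$, restricts first to principal solutions conformal outside $2\DD$, and observes (via Koebe distortion) that
$$
\psi(\lambda)\;=\;\log\frac{8}{f^\lambda(r)-f^\lambda(0)}
$$
is a holomorphic map of $\DD$ into the right half-plane with $\psi(0)>0$. The Schwarz--Pick lemma for $\DD\to\Hpl$ then directly produces the Euclidean disk estimate
$$
\left|\psi(\lambda)-\tfrac{1+|\lambda|^2}{1-|\lambda|^2}\,\psi(0)\right|\leqslant \tfrac{2|\lambda|}{1-|\lambda|^2}\,\psi(0),
$$
which, after unwinding at $|\lambda|=k$, \emph{is} \eqref{logs} for the principal branch. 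Both real and imaginary parts of $\log f(r)$ are controlled simultaneously by a single genuinely complex-analytic inequality --- there is no separation into ``stretching'' and ``rotation.'' Your plan also omits the Stoilow factorization step needed to pass from principal maps to arbitrary normalized $K$-quasiconformal maps of $\C$; the paper handles the conformal outer factor via Lemma \ref{riemann} and the estimate \eqref{ysi}. Your step~(4), the sharpness computation for $f_\tau$, is correct and matches the paper's.
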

\smallskip
% \vspace{2cm}
% \centerline{here: \quad KUVA (of the circle and coordinate axis) !}
 % \vspace{1cm}
  
% \bigskip

For an illustration of the parameter disk \eqref{etas}, see Figure \ref{fig:hypdisk} presented after Theorem \ref{localrotstret}.

\medskip

Before embarking to the proof, let us recall some of the earlier results on quasiconformal rotation.
Gutlyanski{\u\i} \;  and Martio \cite{GM} answer a problem of John \cite{Joh} on bilipschitz mappings by determining  the maximal rotation  for $K$-quasiconformal maps that fix given annuli, keep one boundary circle fixed and rotate the other boundary circle. The extremal in this situation is given by the map \refeq{eq:gammaalpha} where $\alpha=1$, corresponding  to a  pure rotation. 
Balogh, F\"assler and Platis \cite {BFP} extend this result to annuli with different modulus, and the extremal is of the general form \refeq{eq:gammaalpha}. Both of these works actually consider a fairly general class of maps of finite distortion. However, they only consider mappings between round annuli. Our result relaxes on this hypothesis,  and we simply ask for the maximal combined rotation and H\"older distortion of $f(z+r) - f(z)$ for any mapping defined in the entire plane; after a normalization we are reduced to \eqref{logs}. 
%Once this global result is established, we will consider the local aspects in Theorem \ref{localrotstret}.
%\vspace{.4cm}

\begin{proof}[Proof of Theorem \ref{ptwise}]
The case of the equality for $f_\tau$ is immediate. Note, in particular,  that the circle \eqref{etas} is precisely the set of points $\tau$ such that
$$\left|\frac{\tau-1}{\tau +1}\right| =  \frac{K-1}{K+1}\, .
$$ 
Since each  $f = f_\tau$ in \eqref{eq:gammaalpha} satisfies the Beltrami equation $f_{\bar z} = \frac{\tau-1}{\tau +1} \frac{z}{\bar z} f_z$, the mappings are indeed  $K$-quasiconformal. 

Next, let $0 < r \leqslant 1$, and consider first $K$-quasiconformal mappings which are conformal outside the disk $B(0,2)$ with 
\begin{equation}
\label{taas}
 f(z) =  z + {\mathcal O}(1/z)\qquad \mbox{as } |z| \to \infty. 
\end{equation}
Embed then $f$ to a holomorphic flow 
$\{f_{\lambda} \}_{\lambda \in \DD}$ as in \eqref{eq:flow}, so that each map has  the same asymptotics  \eqref{taas}.  By Koebe distortion (see e.g. \cite[Theorem 2.10.4]{AIMb}) $f_\lambda B(0,2) \subset B(0,4) $, so that the values of $f_\lambda(z) - f_\lambda(0)$ for $|z| \leqslant 2$ all stay within the disk $B(0,8)$. We thus observe that the holomorphic function 
$$\psi(\lambda) := \log \frac{8}{f_\lambda(r) - f_\lambda(0)}, \qquad \lambda \in \DD,
$$
where the  branch for the logarithm is fixed by setting $\psi (0)>0$, has positive real part. 

These preparations take us to the main point of the argument, where we simply apply  Schwarz lemma to $\psi: \DD \to \Hpl$, with $\Hpl$ the right half plane. This implies  $\left|\psi(\lambda) - \frac{1+|\lambda|^2}{1-|\lambda|^2}\, \psi(0) \right| \leqslant \frac{2|\lambda|}{1-|\lambda|^2}\, \psi(0) $. Unwinding the definitions  gives us 
\begin{eqnarray}
\label{prince}
\hspace{1cm}  \left| \log [f(r) - f(0)]  - \frac12 \left(K+\frac{1}{K} \right) \,  \log r \right| &\\ \nonumber \\
  \leqslant  \,\frac12 \left(K-\frac{1}{K} \right)  \log \frac{1}{r} & + \;\;(K-1)\log 8  \nonumber 
\end{eqnarray}
\vspace{-.01cm}
Thus a version of  \eqref{logs} is shown to hold for the principal branch.

It remains to find estimates when we do not have  conformality outside $2\DD$. For this, given a quasiconformal mapping $f$ of $\C$ normalized by $f(0)=0$ and  $f(1)=1$, apply Stoilow factorization (\cite[Theorem 5.5.1]{AIMb}) and represent $f = F \circ f_0$, where $f_0$ is a principal map conformal outside the disk $2\DD$, and $F$ a quasiconformal map which is conformal in the domain $\Omega = f_0(2\DD)$.  

Here we will make use of the following lemma.

\begin{lemma} 
\label{riemann}
Suppose  that $F \colon \Omega \to \C$ is conformal, where $\Omega \subsetneq \C$ is simply connected. Then for any $x, y, w \in \Omega$  with $x\not=w$, $y\not= w$, and any chosen branch of the logarithm {\rm (}c.f. Proposition \ref{branch5}{\rm )} it holds that
$$ \left| \log \frac{F(x)  -  F(w)}{x-w} \, - \,  \log \frac{F(y) - F(w)}{y-w} \right| \leqslant 10 \rho_\Omega(x,y).
$$
where $\rho_\Omega$ is the hyperbolic metric of the domain $\Omega$.
\end{lemma}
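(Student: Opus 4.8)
The plan is to reduce the estimate to a standard distortion inequality for conformal maps by realizing the difference quotient $z \mapsto \log \frac{F(z)-F(w)}{z-w}$ as (a branch of) $\log g'$ for a suitable conformal $g$, and then invoke the Koebe-type bound that $|\nabla \log g'|$ is comparable to the hyperbolic density. First I would fix $w \in \Omega$ and consider the holomorphic function $z \mapsto h_w(z) := \frac{F(z)-F(w)}{z-w}$ on $\Omega$; since $F$ is a conformal embedding, $h_w$ is non-vanishing on $\Omega$ (it extends holomorphically across $z=w$ with value $F'(w) \neq 0$), so on the simply connected domain $\Omega$ it admits a single-valued holomorphic logarithm $\log h_w$, matching the branch selected via Proposition \ref{branch5}. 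The quantity to estimate is then $|\log h_w(x) - \log h_w(y)|$, which we bound by integrating $|(\log h_w)'|$ along the hyperbolic geodesic from $x$ to $y$ in $\Omega$; thus it suffices to show the pointwise bound $|(\log h_w)'(\zeta)| \le 10\,\lambda_\Omega(\zeta)$ for all $\zeta \in \Omega$, where $\lambda_\Omega$ is the hyperbolic density.

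The key computation is that $(\log h_w)'(\zeta) = \frac{h_w'(\zeta)}{h_w(\zeta)} = \frac{F'(\zeta)}{F(\zeta)-F(w)} - \frac{1}{\zeta - w}$. To estimate this I would pass to the unit disk: let $\phi \colon \DD \to \Omega$ be a Riemann map with $\phi(0) = \zeta$, and consider $G = F \circ \phi$, a conformal map on $\DD$. The expression $\frac{F'(\zeta)}{F(\zeta)-F(w)} - \frac{1}{\zeta-w}$, when transported by $\phi$ and evaluated at $0$, becomes a combination of $G'(0)$, $G''(0)$ and the location of the point $\phi^{-1}(w) \in \DD$; after normalizing $G$ (composing with an affine map, which does not change difference quotients of the relevant form) one reduces to bounding, for a univalent $G$ on $\DD$ with $G(0)=0$, $G'(0)=1$, the quantity $\left|\frac{1}{G(a)} - \frac{1}{a} + \frac{G''(0)}{2}\cdot(\text{bounded})\right|$ uniformly over $a \in \DD$. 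By the Koebe distortion and growth theorems $|G(a)| \ge \frac{|a|}{(1+|a|)^2}$ and $|G''(0)| \le 4$, and the Taylor expansion $\frac{1}{G(a)} = \frac{1}{a} - G''(0)/2 + O(|a|)$ near $a=0$ handles small $|a|$; combining the two regimes yields an absolute constant, and tracking it gives something comfortably below $10$ once the factor $\lambda_\DD(0) = 2$ and the conformal invariance $\lambda_\Omega(\zeta) = \lambda_\DD(0)/|\phi'(0)|$ are accounted for (the $h_w'/h_w$ term scales like $1/|\phi'(0)|$). Finally, integrating over the geodesic gives $|\log h_w(x) - \log h_w(y)| \le 10\,\rho_\Omega(x,y)$, which is exactly the claim.

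The main obstacle is the bookkeeping in the second paragraph: making sure that the auxiliary point $w$ (which is \emph{not} the center of the Riemann map) is handled uniformly, i.e. that the bound on $\frac{F'(\zeta)}{F(\zeta)-F(w)} - \frac{1}{\zeta-w}$ is genuinely independent of where $w$ sits in $\Omega$. The clean way around this is to treat $w$ as frozen and only differentiate in $\zeta$, so that $w$ enters merely through the univalent function $G$ and the single point $a = \phi^{-1}(w)$; then the required estimate is a statement purely about the class $S$ of normalized univalent functions on $\DD$, uniform in $a \in \DD$, and follows from the classical distortion bounds. A secondary technical point is confirming that the selected branches of the two logarithms $\log h_w(x)$ and $\log h_w(y)$ are the \emph{same} branch of $\log h_w$ — but this is immediate since $h_w$ has a single-valued logarithm on the connected set $\Omega$, so any continuous branch of $\log\frac{F(\cdot)-F(w)}{\cdot - w}$ along a path from $y$ to $x$ agrees with $\log h_w$ up to one global additive constant $2\pi i k$, which cancels in the difference.
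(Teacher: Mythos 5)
Your approach is correct and genuinely different from the paper's. The paper centers a Riemann map $\phi:\DD\to\Omega$ at the \emph{fixed} auxiliary point, $\phi(0)=w$, and then writes
$$\log\frac{F(\phi(z))-F(\phi(0))}{\phi(z)-\phi(0)} = \log\frac{F(\phi(z))-F(\phi(0))}{z}-\log\frac{\phi(z)-\phi(0)}{z}\pmod{2\pi i},$$
so that the problem reduces to showing $\bigl|\log\frac{\Psi(x)}{x}-\log\frac{\Psi(y)}{y}\bigr|\leqslant 5\rho_\DD(x,y)$ for a normalized conformal map $\Psi$ of $\DD$; this in turn is assembled from the two classical inequalities $\abs{\Psi''/\Psi'}\leqslant 6(1-|z|^2)^{-1}$ and $\bigl|\log\frac{z\Psi'(z)}{\Psi(z)}\bigr|\leqslant\log\frac{1+|z|}{1-|z|}$. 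Applying this twice (to $F\circ\phi-F(w)$ and to $\phi-w$) yields $5+5=10$ cleanly, with no differentiation in the running variable. You instead center the Riemann map at the running variable $\zeta$, differentiate $\log h_w$, and reduce to the conformally invariant quantity $\bigl|\frac{1}{\widetilde\phi(a)}-\frac{1}{\widetilde G(a)}\bigr|$ for two class-$S$ functions at $a=\phi^{-1}(w)$; integrating the pointwise bound over the geodesic recovers the claim.

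Two remarks on the bookkeeping of your route, since you left the constant as ``comfortably below 10.'' First, after normalization the object to bound is not really a combination of $G'(0)$, $G''(0)$ and $a$ separately; it collapses exactly to $\frac{1}{\widetilde G(a)}-\frac{1}{\widetilde\phi(a)}$, and $G''(0)$ only reappears as the second coefficient when you Taylor-expand $1/\widetilde G$ near $a=0$. Second, to actually land at $10$ you need $\bigl|\frac{1}{\widetilde\psi(a)}-\frac{1}{a}\bigr|\leqslant 10$ uniformly for $\psi\in S$, $a\in\DD$ (recall $\lambda_\DD(0)=2$ with the paper's normalization of the hyperbolic density, so the two class-$S$ terms together must stay below $20$). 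This does hold, but one must patch two regimes: near $a=0$ the identity $\frac{1}{\widetilde\psi(a)}-\frac{1}{a}=\frac{a-\widetilde\psi(a)}{a\widetilde\psi(a)}$ together with the coefficient bounds $|c_n|\leqslant n$ and the Koebe lower bound $|\widetilde\psi(a)|\geqslant |a|(1+|a|)^{-2}$ gives $\bigl|\frac{1}{\widetilde\psi(a)}-\frac{1}{a}\bigr|\leqslant\frac{(2-|a|)(1+|a|)^2}{(1-|a|)^2}$, while for $|a|$ away from $0$ the crude triangle inequality gives $\leqslant\frac{(1+|a|)^2+1}{|a|}$; the minimum of the two peaks around $|a|\approx 0.38$ at roughly $8$, so the argument closes with constant about $8<10$. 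So the approach is sound, but the numerical check — which you asserted but did not carry out — is the step that actually delivers the stated constant.
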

\begin{proof}  The lemma is basically a reformulation of the classical distortion bounds for conformal mappings in the unit disk. Indeed, suppose first that $F$ is conformal in $\DD$ with $F(0)=0$.  Then  \cite[p.21]{Pomm} we have $\left| \frac{F''(z)}{F'(z)} \right| \leqslant 6(1- |z|^2)^{-1}$  uniformly in $\DD$ which implies  
for any branch of the logarithm that
$$|\log F'(x) - \log F'(y)| \leqslant 3 \rho_\DD(x,y), \qquad x, y \in \DD.$$ On the other hand \cite[p.66]{Pomm}, we also have
$$ \left| \log \frac{z F'(z)}{F(z)} \right| \leqslant \log \frac{1+|z|}{1-|z|}, \qquad z \in \DD,
$$  
with the normalization $\log 1=0$ at $z=0.$
Combining the estimates gives, again for any branch of the logarithm, 
\begin{equation} \label{class3} \left|  \log \frac{F(x)}{x} -  \log \frac{F(y)}{y} \right| \leqslant 5\rho_\DD(x,y), \qquad x, y \in \DD. 
\end{equation}

In the general case compose with the Riemann map $\phi: \DD \to \Omega$, $\phi(0) = w$. 
%Further,  if $h, g :\C \to \C$ are quasiconformal, t
Then for any choices of the respective branches
\vspace{-.3cm}

\begin{eqnarray}
\label{logcomp}
 \hspace{-.94cm} \log \frac{F(\phi(z))-F(\phi(0))}{\phi(z)-\phi(0)}  &=&  \log \frac{F(\phi(z))-F(\phi(0))}{z} -  \log \frac{\phi(z)-\phi(0)}{z} \nonumber \\
 &+& \;2\pi  i \, n, \qquad 0 \neq z \in \DD, \nonumber
\end{eqnarray}
 for some $n\in \Z$. Applying now \eqref{class3} twice proves the claim.
\end{proof}

Returning to the proof of Theorem \ref{ptwise} and the factorization $f = F \circ f_0$,  the  triple $x= f_0(r), y=f_0(1)$, $w=f_0(0)$ is contained in $ f_0( \overline{\DD} )$, and this set  has hyperbolic diameter  in $\Omega =  f_0(2\DD)$ bounded uniformly in terms of $K$ only.
As $F(x) = f(r), F(y) = 1$ and $F(w) = 0$, Lemma \ref{riemann} shows that
$$ \Bigl| \log f(r) - \log\left( f_0(r) - f_0(0) \right) + \log\left(f_0(1) - f_0(0) \right)  \Bigr| \leqslant  M(K) < \infty.
$$
On the other hand, applying \eqref{prince} with $r = 1$ to $f_0$ implies 
\begin{equation}
\label{ysi}
|\log\left(f_0(1) - f_0(0)\right) | \leqslant (K-1)\log 8. 
\end{equation}
Finally, combining these bounds with \eqref{prince}  for a general $0 < r < 1$  completes the proof of the theorem.
\end{proof}

\subsection{The Infinitesimal Rate of Rotation} \label{infit}

For a quasiconformal mapping $f:\Omega \to \Omega'$ defined in a proper subdomain $\Omega \subset \C$ it is senseless to ask for uniform bounds for  rotation or stretching in all of $\Omega$ - the  general properties  are determined by the geometry of the specific domains in question.  One should recall in this context that even  a conformal map from unit disc onto e.g. a snowflake domain rotates in an unbounded manner  as one approaches suitably the boundary. 

On the other hand, in any domain  for {\it  infinitesimal} rotation and stretching we have uniform bounds.
To describe this more precisely, first consider for $\tau: =\alpha(1+i \gamma)$  the prototype example 

\begin{equation} \label{model12}
f_\tau(z) =  \frac{z}{|z|} |z|^{\alpha(1+i\gamma)}, \;\; \mbox{ for }\;  |z| \leqslant 1 \; \;\;\mbox{ with }\;\; \; f_\tau(z) = z, \; \mbox{ for }\; |z| >1.
\end{equation}
\smallskip

\noindent  The map $f_\tau$ takes a radial segment in the unit disk to  a logarithmic spiral with  rate of rotation  $\gamma \in \R$, see Figure  \ref{fig:spiral5}, while $\alpha > 0$\,  can be considered as the  
 stretching exponent of $f_\tau$ at the origin.
 
 \begin{figure}
\includegraphics[width=6cm]{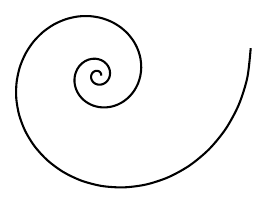}
\caption{A logarithmic spiral with rotation rate $\gamma=5$.}
\label{fig:spiral5}
\end{figure}
 
Intuitively, keeping the model map $f_\tau$ in mind, for a general mapping $f$ the local exponent  for stretching at a point $z_0 \in \Omega$ should then be given by 
\begin{equation}
\label{modela}
 \alpha(z_0) = \; \lim_{r \to 0} 
\, \frac{\; \log \big|f(z_0 + r) - f(z_0) \big|\; }{\log r}\,,
\end{equation}
while the (geometric)  rate of  rotation one would define as 
\begin{equation}
\label{modelg}
 \gamma(z_0) 
 = \lim_{r \to 0} \,
\frac{\; \arg\bigl(f(z_0 + r) - f(z_0)\bigr) \; }{\;\vspace{.1cm} \log | f(z_0 +r) - f(z_0)| \;}.
\end{equation}
In  defining the latter expression one takes into account  that  the more the mapping is compressing the more it has to rotate to produce  a given geometric spiral as the image of  a  half-line emanating from $z_0$. An easy way to illustrate this is to consider  the curve $t\to t^{1+i\gamma}$ and the change of variable $t = s^K$.

\medskip

The problem with the above  intuitive notions is, of course, that the respective limits need not exist as $f$ may have different behaviour at different scales. Therefore we formulate the  infinitesimal joint stretching-rotation bounds as follows.

\begin{theorem} %Name for this theorem ??
\label{localrotstret}
Assume $f:\Omega \to \Omega'$  is a $K$-quasiconformal homeomorphism between two planar domains,  and that $z_0 \in \Omega$ is given.

Suppose we can find radii $r_k \to 0$, such that the limits 
\vspace{-.3cm}

\begin{eqnarray}
\alpha &=& \lim_{k\to \infty} \frac{\; \log \big|f(z_0 + r_k) - f(z_0) \big|\; }{\log r_k}\,, \label{alffa}\\
\gamma &=&  \lim_{k\to \infty}  \frac{\; \arg\bigl(f(z_0 + r_k) - f(z_0)\bigr) \; }{\;\vspace{.1cm} \log | f(z_0 + r_k) - f(z_0)| \;} \label{gamma}
\end{eqnarray}
\smallskip
\vspace{-.4cm}

\noindent both exist. Then  the exponent $\tau= \alpha(1+i\gamma)$ satisfies
\vspace{-.1cm}

\begin{equation}
\label{etas2}
\left| \, \tau -  \,\frac12 \left(K+\frac{1}{K} \right) \right| \leqslant \frac12 \left(K-\frac{1}{K} \right).
 \end{equation}
 \end{theorem}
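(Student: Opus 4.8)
\textbf{Plan for the proof of Theorem \ref{localrotstret}.}
The idea is to reduce the infinitesimal statement to the local one already proved in Theorem \ref{ptwise}. Since $f$ is only defined on a proper subdomain, the first step is a localization: pick a small disk $B(z_0, \rho) \subset \Omega$, and on it compare $f$ with its renormalizations. Concretely, for each scale $r_k$ introduce the normalized maps
\[
g_k(z) = \frac{f(z_0 + r_k z) - f(z_0)}{f(z_0 + r_k) - f(z_0)}, \qquad z \in \tfrac{1}{r_k}(\Omega - z_0),
\]
so that $g_k(0) = 0$ and $g_k(1) = 1$, and each $g_k$ is $K$-quasiconformal on an increasingly large domain. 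The key compactness input is that a family of $K$-quasiconformal maps normalized at two points is precompact (in the locally uniform topology) and every limit is again $K$-quasiconformal; thus after passing to a subsequence $g_k \to g$, a $K$-quasiconformal map of $\C$ with $g(0)=0$, $g(1)=1$.

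Next I would track what the hypotheses \eqref{alffa}--\eqref{gamma} say about the $g_k$'s and hence about $g$. The quantities $\alpha$ and $\gamma$ are exactly encoding the asymptotics of $\log\bigl(f(z_0+r_k)-f(z_0)\bigr)$ relative to $\log r_k$; feeding this into Theorem \ref{ptwise} applied to each $g_k$ (with its role of ``$r$'' being a further auxiliary small radius), and letting $k \to \infty$, should yield a statement about $g$ at the origin. The branch-of-logarithm bookkeeping from Section \ref{se:logarithmic branches} (Proposition \ref{branch5} and Remark \ref{rem:flow}) is what guarantees that the arguments behave continuously under this limiting procedure, so that $\arg$ and $\log$ of the relevant differences pass to the limit without ambiguity. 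The upshot I expect is that $g$ has a genuine stretching exponent $\alpha$ and rotation rate $\gamma$ at $0$, i.e. the limits \eqref{modela}--\eqref{modelg} exist for $g$ at $z_0 = 0$ with these values.

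Finally, for the limiting map $g$ one applies the already-established local bound. Indeed, for $g$ normalized by $g(0)=0,\ g(1)=1$, Theorem \ref{ptwise} gives, for every $0 < r < 1$,
\[
\left| \log g(r) - \tfrac12\!\left(K + \tfrac1K\right)\log r \right| \le \tfrac12\!\left(K - \tfrac1K\right)\log\tfrac1r + c(K).
\]
Writing $\log g(r) = \log|g(r)| + i\arg g(r)$ and using that $\log|g(r)|/\log r \to \alpha$ and $\arg g(r)/\log|g(r)| \to \gamma$ (so $\arg g(r)/\log r \to \alpha\gamma$), divide through by $\log\frac1r$ and let $r \to 0$: the additive constant $c(K)$ washes out, and what survives is precisely
\[
\left| \alpha(1+i\gamma) - \tfrac12\!\left(K+\tfrac1K\right) \right| \le \tfrac12\!\left(K - \tfrac1K\right),
\]
which is \eqref{etas2}.

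\textbf{Main obstacle.} The delicate point is the passage to the limit $g_k \to g$ together with the claim that the hypothesized subsequential limits \eqref{alffa}--\eqref{gamma} really do transfer into honest (full, not subsequential) limits for $g$ at the origin; one must make sure the two-point normalization at scale $r_k$ does not distort the argument information, and that the logarithmic branches chosen for the $g_k$ are mutually compatible so that $\arg g_k(r) \to \arg g(r)$ without $2\pi$ jumps. This is exactly where the systematic branch choice of Proposition \ref{branch5} and the quasisymmetry/compactness of $K$-quasiconformal maps (Section \ref{se:pre}) do the real work; the final division-and-limit computation is routine once this foundation is in place.
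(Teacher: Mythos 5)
Your overall strategy is recognizable — reduce to the normalized two-point picture and then quote Theorem~\ref{ptwise} — but the particular reduction you propose does not work, and the gap is precisely the one you flag at the end, not a mere branch-bookkeeping issue.

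The problem is that your normalization kills the very data you need. You set $g_k(z)=\bigl(f(z_0+r_kz)-f(z_0)\bigr)/\bigl(f(z_0+r_k)-f(z_0)\bigr)$, so $g_k(1)=1$ for every $k$. The numbers $\alpha,\gamma$ are carried entirely by the asymptotics of $f(z_0+r_k)-f(z_0)$ relative to $r_k$, and that quantity is exactly what you divide out. After passing to a locally uniform subsequential limit $g_k\to g$, the limit $g$ is indeed a $K$-quasiconformal map of $\C$ fixing $0$ and $1$, but there is no reason whatsoever for $g$ to possess \emph{honest full} limits \eqref{modela}--\eqref{modelg} at the origin, let alone with the prescribed values $\alpha,\gamma$. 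Concretely: take $f$ radial, $f(z)=z|z|^{\sigma(\log|z|)}$ with $\sigma(s)=a+ib+\eta(s)$ for a bounded oscillating perturbation $\eta$, and choose $r_k$ so that $\eta(\log r_k)\to 0$. The hypotheses of the theorem are met with $\alpha=1+a$, $\gamma=b/(1+a)$, but the subsequential blow-up limit $g$ is of the form $z\mapsto z\exp\!\bigl[\,\Sigma(\log|z|)\,\bigr]$ with $\Sigma$ still oscillating, so $\log|g(r)|/\log r$ does not converge as $r\to 0$. Your final division-and-limit step therefore has nothing to divide-and-limit against; at best it bounds $\limsup$ and $\liminf$ of $\log g(r)/\log(1/r)$, which is not the same thing as bounding $\tau=\alpha(1+i\gamma)$.

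The fix, which is what the paper actually does, is to normalize at a \emph{fixed} reference scale $r_0$ rather than at the varying scale $r_k$: apply the local bound to the quantity
$\log\bigl[\bigl(f(z_0+tr_0)-f(z_0)\bigr)/\bigl(f(z_0+r_0)-f(z_0)\bigr)\bigr]$
for $t\in(0,1]$, then substitute $t=r_k/r_0$ and send $k\to\infty$. Now the $\alpha,\gamma$ hypotheses enter directly (the inner point is $z_0+r_k$, precisely where you control $f$), the term $\log\bigl[f(z_0+r_0)-f(z_0)\bigr]$ is a harmless additive constant that disappears after dividing by $\log(1/r_k)$, and no compactness or subsequential blow-up limit is needed. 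One further point: to localize, the paper uses Stoilow factorization $f=\varphi\circ f_0$ together with Lemma~\ref{riemann} to control the conformal outer factor $\varphi$; this replaces your appeal to normal-family compactness and avoids all of the pitfalls above. Your branch-consistency remarks via Proposition~\ref{branch5} are correct in spirit but do not repair the loss of information caused by normalizing at $r_k$.
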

 
\begin{figure}
\includegraphics[width=6.5cm]{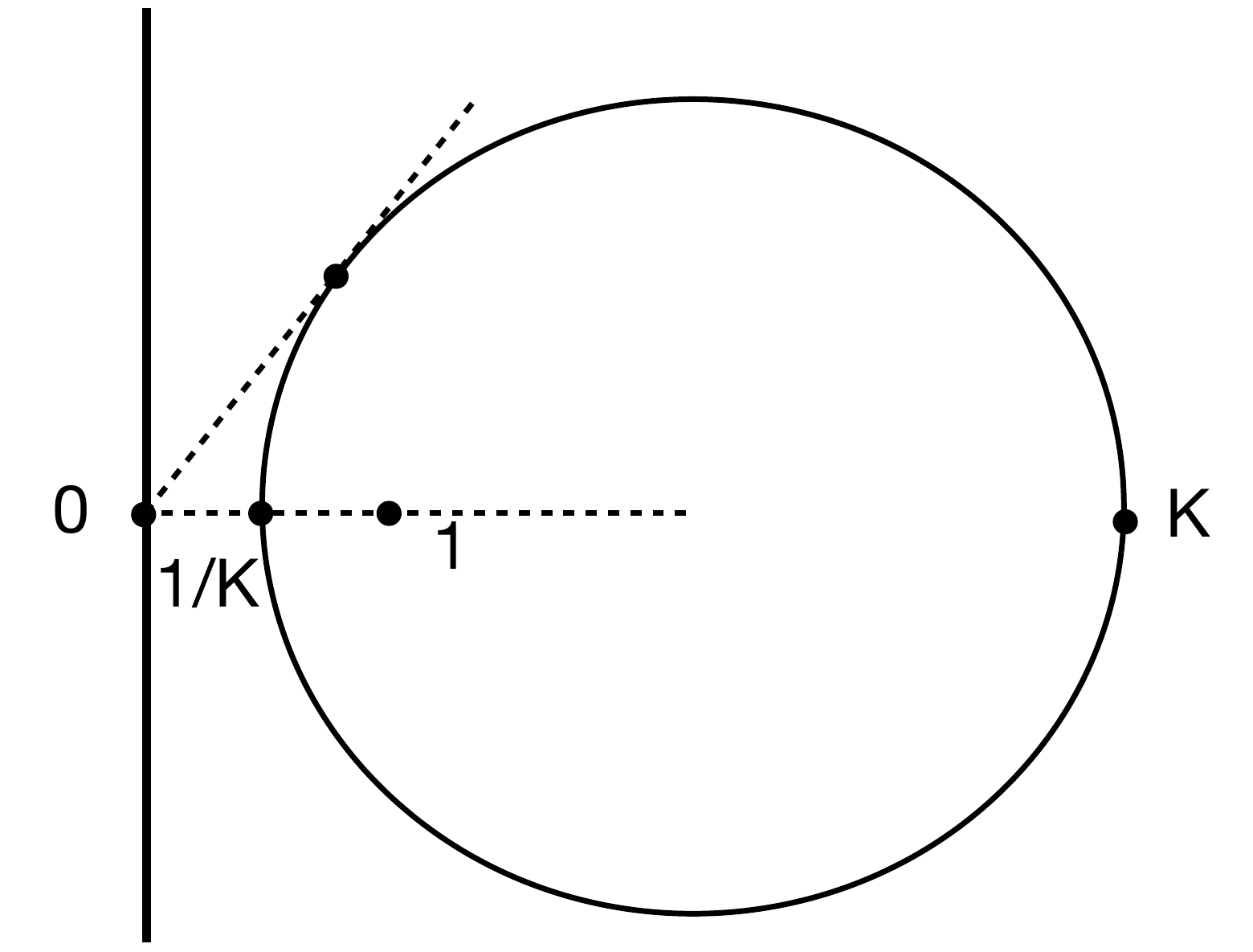}
\caption{The exponent $\tau$ from \eqref{etas2} lies in the hyperbolic disk pictured above.}
\label{fig:hypdisk}
\end{figure}

\medskip
\begin{remark}\label{rem:explicate}
To explicate the statement above, in \eqref{gamma} we use  some continuous branch of  argument $r \mapsto \arg\bigl(f(z_0 + r) - f(z_0)\bigr)$. However, it is  important to notice that the limit  \eqref{gamma}, if  exists, is independent of the choice of branch used.
\end{remark}

\begin{corollary}\label{rem:rateofrot} In particular, for every $K$-quasiconformal mapping \break $f:\Omega \to \Omega'$ and for every point  $z_0 \in \Omega$,  each limiting rate of rotation  $\gamma$ in  \eqref{gamma} satisfies
\begin{equation}
%\label{ }
|\gamma| \leqslant \frac12 \left(K-\frac{1}{K} \right).
\end{equation}
\end{corollary}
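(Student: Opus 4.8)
The plan is to reduce the statement immediately to Theorem~\ref{localrotstret}. The only gap is that Theorem~\ref{localrotstret} requires the \emph{stretching} limit \eqref{alffa} to exist along the \emph{same} radii $r_k\to 0$ along which the rotation limit \eqref{gamma} converges, while here we are only given a sequence $r_k\to0$ realizing a limiting rotation rate $\gamma$. So the first — and essentially only — task is to extract from such a sequence a subsequence along which the stretching exponent also converges; after that everything is a one-line computation with the disk \eqref{etas2}.

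First I would record an a priori two-sided H\"older bound near $z_0$. Fix a disk with $\overline{B(z_0,\rho)}\subset\Omega$. By Mori's theorem $f$ is H\"older-$1/K$ on $\overline{B(z_0,\rho)}$, and applying the same theorem to the ($K$-quasiconformal) inverse $f^{-1}$ gives a reverse bound; together they yield a constant $C=C(K,f,z_0)\ge 1$ with
\[
 C^{-1}\,r^{K}\;\le\;|f(z_0+r)-f(z_0)|\;\le\;C\,r^{1/K},\qquad 0<r<\rho .
\]
Taking logarithms and dividing by $\log r<0$ shows that, for the radii $r_k\to 0$ appearing in \eqref{gamma}, the numbers
\[
 \alpha_k:=\frac{\log|f(z_0+r_k)-f(z_0)|}{\log r_k}
\]
eventually lie in a fixed compact subinterval of $(0,\infty)$; more precisely $\tfrac1K+o(1)\le\alpha_k\le K+o(1)$. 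Hence I may pass to a subsequence $\{r_{k_j}\}$ along which $\alpha_{k_j}\to\alpha$ for some $\alpha\in[1/K,K]$, while the limit \eqref{gamma} is of course still $\gamma$ along this subsequence.

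Now Theorem~\ref{localrotstret} applies to $\{r_{k_j}\}$: both \eqref{alffa} (with value $\alpha$) and \eqref{gamma} (with value $\gamma$) hold, so $\tau=\alpha(1+i\gamma)$ lies in the disk \eqref{etas2}. Write $c=\tfrac12(K+\tfrac1K)$ and $R=\tfrac12(K-\tfrac1K)$, and note the identity $c^2-R^2=1$. Since $\im\tau=\alpha\gamma$, the inclusion $|\tau-c|^2\le R^2$ expands to $\alpha^2\gamma^2\le R^2-(\alpha-c)^2=-1-\alpha^2+2\alpha c$; dividing by $\alpha^2>0$ and completing the square in $1/\alpha$ gives
\[
 \gamma^2\;\le\;R^2-\Big(\tfrac1\alpha-c\Big)^2\;\le\;R^2 ,
\]
that is, $|\gamma|\le R=\tfrac12\!\left(K-\tfrac1K\right)$, as claimed. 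Geometrically this is just the fact that the two tangent lines from the origin to the disk \eqref{etas2} have slopes $\pm R$, since the distance $c$ from the origin to the centre satisfies $c=\sqrt{R^2+1}$.

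The only non-formal ingredient is the displayed two-sided bound; its upper half is Mori's theorem and its lower half is Mori's theorem applied to $f^{-1}$, so I expect no real obstacle — the remainder is a subsequence extraction and an elementary estimate for the hyperbolic disk. If one prefers to avoid invoking $f^{-1}$, one can instead use that $f$ is quasisymmetric on $\overline{B(z_0,\rho)}$ (cf.\ \eqref{qsymmetry}), which supplies the two-sided bound directly.
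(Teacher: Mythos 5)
Your proof is correct and takes the route the paper intends: the corollary is presented as an immediate consequence of Theorem~\ref{localrotstret}, with the geometry (tangent lines from the origin to the disk \eqref{etas2} having slope $\pm R$ because $c^2-R^2=1$) exactly as you compute. The only detail the paper leaves implicit — extracting, via Mori's theorem applied to $f$ and $f^{-1}$, a subsequence of the $r_k$ along which the stretching exponent \eqref{alffa} also converges so that Theorem~\ref{localrotstret} is applicable — is the same device the authors themselves invoke later in the proof of Corollary~\ref{co:multifractal1}, so you have simply made explicit what the paper takes for granted.
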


\noindent The tangent line in Figure \ref{fig:hypdisk} illustrates the extremal case in Corollary \ref{rem:rateofrot}.

Recall furthermore that  for $\left|\frac{\tau-1}{\tau +1}\right| =  \frac{K-1}{K+1}\,$  the  model map
\eqref{model12} is  $K$-quasiconformal,  that for $z_0 = 0$ the exponents  $\alpha$ and $\gamma$ are given by \eqref{alffa} and \eqref{gamma}, respectively,  for any sequence of radii $r_k \to 0$,   and that the exponent $\tau= \alpha(1+i\gamma)$ satisfies
  \eqref{etas2} as an equality. Thus the bounds of  Theorem \ref{localrotstret} are optimal.
\smallskip

\noindent {\it Proof of Theorem \ref{localrotstret}.} It suffices to consider simply connected domains 
$\Omega \subset \C$. We first apply  Stoilow factorization 
\cite[Theorem 5.5.1]{AIMb} and represent
$$ f(z) = \varphi \circ f_0(z), \qquad z \in \Omega,
$$
where $f_0 :\C \to \C$ is $K-$quasiconformal and Ê$\varphi$ is conformal in the domain $\Omega'' = f_0(\Omega)$. If $\Omega = \C$ we take $\varphi(z) = z$.

Theorem \ref{ptwise} now implies

\begin{eqnarray}\label{dupl}
\left|\,  \log\left[\frac{f_0(z_0 + t r_0) - f_0(z_0)}{f_0(z_0 + r_0) - f_0(z_0)} \right]  -   \,\frac12 \left(K+\frac{1}{K} \right)\log t \,, \right|  &  \\
 \leqslant  \frac12 \left(K-\frac{1}{K} \right)\, \log \frac1t \;  + &  \hspace{-.1cm} c(K),  \qquad t \in (0,1],
 \nonumber
\end{eqnarray}
where $r_0$ is such that $B(z_0,r_0) \subset \Omega$.
On the other hand,  Lemma \ref{riemann} gives
\begin{equation} \label{eq:cor2.3}
\left| \log \frac{\varphi(x)  -  \varphi(z)}{x-z} \, - \,  \log \frac{\varphi(y) - \varphi(z)}{y-z} 
\right| \leqslant 10 \rho_{\Omega''}(x,y).\nonumber
\end{equation}
for $x,y,z \in \Omega''$.
Choosing
 $x= f_0(z_0 + t r_0)$, $y = f_0(z_0 + r_0)$ and $z =  f_0(z_0)$ shows that \eqref{dupl} holds for $f$, too, with $c(K)$ replaced by a constant depending on $K$ and the hyperbolic distance in $\Omega''$ between  $ f_0(z_0 + t r_0)$ and $ f_0(z_0 +  r_0)$. Adjusting $r_0$ this distance can be made arbitrarily small uniformly in $t\in (0,1]$. 
 % Here we choose branches so that $t \mapsto \arg[f_0(z_0 + t r_0)]$
 %and $t \mapsto \arg[f(z_0 + t r_0)]$ are continuous on $(0,1]$.
 
 Finally, letting $t = r_k/r_0$, dividing by $\log (1/r_k)$ and taking $k \to \infty$ gives the bound \eqref{etas2}. Note here that by Mori's theorem we necessarily have $\alpha \geqslant 1/K > 0$.
 \hfill $\Box$
 \smallskip

\subsection{Global Aspects of Rotation: The Complex Logarithm $\log f_z$}\label{se:complex log}

With Theorems \ref{ptwise} and  \ref{localrotstret} one has  a first description of  the  rotation and stretching properties of quasiconformal mappings.  Theorem  \ref{localrotstret} determines the extremal infinitesimal  phenomena, and it is of interest only at the points of non-differentiability.  

For a principal mapping  with $f(z) = z+o(1)$   when $z\to\infty $,  it is also natural  to consider a global notion, the rotation the image of a half line from $\infty$ to $z_0$ makes around the image point $f(z_0)$, see Figure \ref{fig:winding}. This global measure of rotation, however,  works best  at points of differentiability  and as it turns out,  is described by  the function  $\log  f_z$.

 Before embarking into the several interesting analytic and geometric properties that 
 the complex logarithm of $f_z$ encodes,  we of course need to clarify the pointwise definition of  the function $\,\log f_z\,$. There are actually  two alternative ways for this: an analytic approach and a geometric one. 
\vspace{.3cm}

\noindent{\it Analytic definition}.  
 As explained in Subsection \ref{bilipqc} above, by letting $\mu(z) = \mu_\lambda(z)$ depend holomorphically on a  parameter $\lambda \in \DD$, we have  natural ways to embed a given principal mapping $f$  into a family of quasiconformal maps $f^\lambda$, with  analytic dependence on the parameter $\lambda \in \DD.$  To make further use of this property, we need
 a   refinement of  \cite[Lemma 3.7]{AIPS}.
\begin{lemma}\label{le:simple}
There is a set $E \subset \C$ of full measure, such that for every $\lambda \in \DD$ and for every $z \in E$, the map $ f^\lambda$ is differentiable at $z$ with  $f^\lambda_z \neq 0$. 

In addition, for a fixed $z\in E$  and for any compact $A\subset \DD$ the differential quotients $$ \frac{f^\lambda(z+t)-f^\lambda(z)}{t}, \qquad t\in (0,1), \; \lambda\in A,$$ are uniformly bounded. % in   $\lambda\in A$ and $t\in (0,1)$. 

Furthermore, for $z \in E$ the function $\lambda \mapsto  f^\lambda_z(z)$ is  holomorphic in $\lambda \in \DD$ and the equation % {\rm \eqref{eq:flow}}, or more generally
 {\rm \eqref{eq:beltrami2}} with $\mu = \mu_\lambda$ holds true pointwise in $E$.  
\end{lemma}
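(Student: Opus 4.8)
The plan is to establish the three assertions of Lemma~\ref{le:simple} --- (i) almost-everywhere differentiability with $f^\lambda_z \neq 0$ for all $\lambda$ simultaneously, together with the uniform boundedness of difference quotients, (ii) holomorphy of $\lambda \mapsto f^\lambda_z(z)$, and (iii) the pointwise validity of the Beltrami equation on the good set --- by upgrading \cite[Lemma~3.7]{AIPS} from a single map to the whole holomorphic flow. First I would recall that for a \emph{fixed} $\lambda$, the principal solution $f^\lambda$ is differentiable a.e.\ with $J(z,f^\lambda)>0$ a.e.\ and satisfies \eqref{eq:beltrami2} pointwise a.e.; the issue is to make the exceptional null set independent of $\lambda$. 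The key device is separability: since $\lambda \mapsto f^\lambda$ depends holomorphically (hence continuously, locally uniformly in $z$) on $\lambda \in \DD$, it suffices to control a countable dense set $\{\lambda_n\} \subset \DD$ and then propagate by a normal-families / Hurwitz argument. So I would set $E$ to be the intersection over $n$ of the full-measure sets on which $f^{\lambda_n}$ is differentiable with nonvanishing $f^{\lambda_n}_z$ and satisfies the Beltrami equation, possibly further intersected with the set where the relevant maximal functions (of $|Df^{\lambda}|$, controlled uniformly in $\lambda$ via the area distortion / higher-integrability bounds of \cite{As}) are finite.

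Next I would address differentiability for \emph{every} $\lambda$, not just the dense set. Fix $z \in E$. For the difference quotients $g^\lambda_t(z) := (f^\lambda(z+t)-f^\lambda(z))/t$, the plan is to show they are uniformly bounded for $t\in(0,1)$ and $\lambda$ in a compact $A\subset\DD$: this follows from the uniform higher integrability of $f^\lambda_z$ together with the Sobolev embedding / telescoping estimate used in \cite[Lemma~3.7]{AIPS}, where finiteness of a maximal function of $|Df^\lambda|$ at $z$ (which holds on $E$, uniformly over $A$ by the uniform $L^p$ bounds) gives a modulus-of-continuity-type control $|f^\lambda(z+t)-f^\lambda(z)| \le C_A |t|^{?}$ --- more precisely the argument controls $|g^\lambda_t(z)|$ by the maximal function value. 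Holomorphy in $\lambda$ of $f^\lambda_z(z)$ then comes for free: each $g^\lambda_t(z)$ is holomorphic in $\lambda$, they are locally uniformly bounded on $\DD$, and they converge as $t\to0$ to $f^\lambda_z(z)$ for $\lambda$ in the dense set $\{\lambda_n\}$; by Vitali's theorem (normal families) the convergence is locally uniform in $\lambda$ and the limit is holomorphic and equals $f^\lambda_z(z)$ for \emph{all} $\lambda\in\DD$. In particular $f^\lambda$ is (complex-)differentiable at $z$ for every $\lambda$, and $f^\lambda_z(z)\neq0$: indeed $\lambda\mapsto f^\lambda_z(z)$ is holomorphic and nonzero on the dense set, hence by Hurwitz either identically zero or nowhere zero on $\DD$; it is nonzero at $\lambda$'s in the dense set, so it is nowhere zero.

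Finally, the pointwise Beltrami equation $f^\lambda_{\bar z}(z) = \mu_\lambda(z) f^\lambda_z(z)$ on $E$: one direction is immediate since $\mu_\lambda(z)=\lambda\mu(z)/\|\mu\|_\infty$ and both $f^\lambda_z(z)$ and $f^\lambda_{\bar z}(z)$ are holomorphic (resp.\ anti-holomorphic data) in $\lambda$ --- more precisely, both sides are holomorphic in $\lambda$ once we know $z$ is a point of differentiability for every $\lambda$, they agree on the dense set where the classical a.e.\ identity holds, hence they agree for all $\lambda$. This is where one must be slightly careful: differentiability of $f^\lambda$ at $z$ gives both partials $f^\lambda_z(z), f^\lambda_{\bar z}(z)$ as honest limits, so the identity $Df^\lambda(z)$ acting as a real-linear map has its anti-conformal part equal to $\mu_\lambda(z)$ times its conformal part. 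I expect the main obstacle to be the uniform-in-$\lambda$ control of the difference quotients: making the boundedness genuinely uniform on compact $A\subset\DD$ requires the uniform higher integrability of $\{f^\lambda_z\}_{\lambda\in A}$ (from Astala's area distortion theorem \cite{As}, with exponent depending only on $\|\mu\|_\infty$, not on $\lambda$) and a maximal-function argument that is uniform over $A$; once that is in place, Vitali/Hurwitz do the rest cleanly.
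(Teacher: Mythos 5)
Your overall skeleton (reduce to \cite[Lemma~3.7]{AIPS}, control a dense set of $\lambda$'s, then pass to all $\lambda\in\DD$ by the boundedness-plus-Vitali-plus-Hurwitz argument for the family of difference quotients) is a natural plan, and the endgame --- once uniform boundedness of $\lambda\mapsto (f^\lambda(z+t)-f^\lambda(z))/t$ is in hand --- is fine: Vitali gives locally uniform convergence and holomorphy of the limit, Hurwitz gives nonvanishing since the limit equals $1$ at $\lambda=0$, and the pointwise Beltrami equation propagates from the dense set by analyticity.

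But the crux of the lemma is exactly the step you flag as "the main obstacle" and then assert rather than prove: that for $z$ in a fixed full-measure set $E$, the quantity $\sup_{\lambda\in A}M_p(Df^\lambda)(z)$ is finite on $E$. The justification you offer --- "which holds on $E$, uniformly over $A$ by the uniform $L^p$ bounds" --- is not correct. Uniform bounds $\sup_{\lambda\in A}\|Df^\lambda\|_{L^p}<\infty$ give you, for each fixed $\lambda$, that $M_p(Df^\lambda)$ is finite a.e.; but the exceptional null set is a priori $\lambda$-dependent and $A$ is uncountable, so you cannot simply intersect over a countable dense set and conclude finiteness for every $\lambda\in A$. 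The dense-set-plus-separability device propagates identities once you have boundedness, but it does not produce the boundedness itself, because $M_p(Df^\lambda)(z)$ is not continuous (let alone subharmonic or otherwise controlled) in $\lambda$ without further structure.

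The paper fills precisely this gap with a Neumann-series expansion. Taking $p>2$ close enough to $2$ that the $L^p$ theory of the Beurling transform applies with constant close to $1$, one writes $f^\lambda(z)-z=\sum_{n\ge1}\lambda^n f_n(z)$ where the coefficient maps $f_n$ are $\lambda$-\emph{independent} and satisfy $\|Df_n\|_p\le c_0\,\delta^{-n/2}$ for any prescribed $\delta<1$. Via the Morrey estimate $Q_{f_n}(x)\le c_p M_p(Df_n)(x)$ and the weak $(1,1)$ inequality for the maximal function, the sets $A_k:=\{Q_{f_k}>\delta^{-3k/4}\}$ have geometrically small measure, so $F_n:=\bigcup_{k\ge n}A_k$ shrinks to a null set, and off $F_n$ the tail $h_{n,\lambda}=\sum_{k\ge n}\lambda^k f_k$ has $Q_{h_{n,\lambda}}(x)\lesssim\delta^{n/4}$ \emph{uniformly over} $|\lambda|\le\delta$ because the geometric factor $\lambda^k$ dominates the controlled growth $\delta^{-3k/4}$. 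Intersecting with the full-measure set where each fixed $f_k$ is classically differentiable produces $E$ directly, gives differentiability of $f^\lambda$ for all $\lambda$ at once with $Df^\lambda(x)=\mathrm{Id}+\sum\lambda^k Df_k(x)$, and yields the uniform difference-quotient bound from the two displayed maximal-function estimates. Your proposal needs this (or an equivalent) $\lambda$-independent decomposition; without it the uniform-in-$\lambda$ pointwise control is unjustified.
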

\noindent We postpone the  proof  of the Lemma to the end of this section.
As $f^0(z) \equiv z $,  it is natural to set $\log f_z^0 \equiv 0$, and thus requiring that  $\log f^\lambda_z(z)$ remains holomorphic in $\lambda$ defines the logarithm uniquely at every $z \in E$.
Returning to the original solution $f$ we can now set the analytic definition
\[ \log f_z(z) = \log f^k_z(z), \qquad k = \| \mu \|_{\infty}, \quad z \in E.
\]

\vspace{.2cm}
\noindent{\it Geometric definition}.  While the analytic approach to $\log f_z$ requires mappings defined in the entire plane, there is a  more geometric approach that works in  general settings. Indeed, by Proposition \ref{branch5} given a quasiconformal mapping $f:\Omega \to \Omega'$ between simply connected domains, {\it each} branch of the logarithm
 \begin{equation}
\label{logs44}
\qquad \; \log \frac{f(z)-f(w)}{z-w}\,, \qquad %\mbox{defined for } 
(z,w) \in  \Omega \times \Omega, \quad  z \neq w,% \setminus \Delta,
\end{equation}
determines at  points  %the set $E$ (defined in Lemma \ref{le:simple}) 
of differentiability of $f$ (such that also  $f_{\bar z} (z)= \mu (z) f_z(z)$, hence almost everywhere) 
a corresponding branch of the complex logarithm of $f_z$, namely
 \begin{equation}
\label{logs7}
 \log \bigl( \partial f(z)  \bigr) = \lim_{t\to 0^+}\;   \log \left[\frac{f(z + t) - f(z)}{t} \right] - \log[ 1 + \mu(z)].
\end{equation}
The role of the auxiliary horizontal approach  is of course redundant, and one could as well apply an approach  from any other direction,
\vspace{-.5cm}

$$ \log \bigl( \partial f(z)  \bigr) = \lim_{t\to 0^+}  \log \left[\frac{f(z + te^{i\theta}) - f(z)}{t} \right] - \log[ 1 + e^{-2i\theta} \mu(z)] - i\theta
$$
\vspace{.01cm}

For general  $f:\Omega \to \Omega'$ there is no natural choice for the argument of $\partial f(z)$ in \eqref{logs7} but for principal mappings $f:\C \to \C$, unless otherwise explicitly stated, we will always consider 
%, conformal near infinity with asymptotics 
%$$ f(z) =  z + {\mathcal O}(1/z)\qquad \mbox{as } |z| \to \infty, 
 the principal  branch of $\log f_z(z_0)$, determined by the conditions \eqref{branch} and \eqref{logs7}. This branch  admits a clear geometric interpretation.
Namely, %given the point $z_0 \in \C$ 
we may  consider  the corresponding  continuous branch of the argument,
\begin{equation}
\label{arg}
\arg\bigl[f(z_0 + t) - f(z_0) \bigr] = \im  \log \left[\frac{f(z_0 + t) - f(z_0)}{t} \right] , \qquad 0 < t < \infty,\nonumber 
\end{equation}
where in the case of the principal branch \eqref{branch} we have  $ \arg\bigl[f(z_0 + t) - f(z_0) \bigr] \to 0$ as $t \to +\infty$. 
%For a real number $t_0 \in (0,\infty)$,
The integer part of $\frac{1}{2\pi} \arg\bigl[f(z_0 + t_0) - f(z_0) \bigr]$ then gives the number of times the point $f(z) = f(z_0 + t)$ winds around  $f(z_0)$, when $z= z_0 + t$ moves from $t= +\infty$ to $t=t_0$ along the horizontal line through $z_0$.  
Further, at any point  $z_0$ of differentiability of $f$   the  argument %defined at \refeq{arg} 
satisfies
\begin{eqnarray*}
\arg\bigl[f(z_0 + t) - f(z_0) \bigr]  & = &  \arg\left[\frac{f(z_0 + t) - f(z_0)}{t} \right]  \\
 & = &  \arg\bigl[\partial f(z_0) + \mu(z_0) {\partial} f(z_0) + {\mathcal O}(t) \bigr]
\end{eqnarray*}
as $t \to 0^+$. Since $|\mu| \leqslant k < 1$, we may choose $\arg[ 1 + \mu(z)] \in (-\frac{\pi}{2}, \frac{\pi}{2})$, and obtain almost everywhere the {\it geometric definition} of the argument \;
$ \arg\left[\partial f(z_0) \right] = \im   \log \partial f(z_0)$.

As the above discussion indicates, with this definition {\it at points of differentiability, for the principal branch we have   $|\im \, \log(\partial f)(z) - 2 \pi\, n| < \pi$ where $n$ is the number of times (with sign) that $f(w)$ winds around $f(z)$ when $w$ travels from $\infty$ to $z$ along a fixed radius, see Figure \ref{fig:winding}.} 
\medskip

\begin{figure}
\includegraphics[width=12cm]{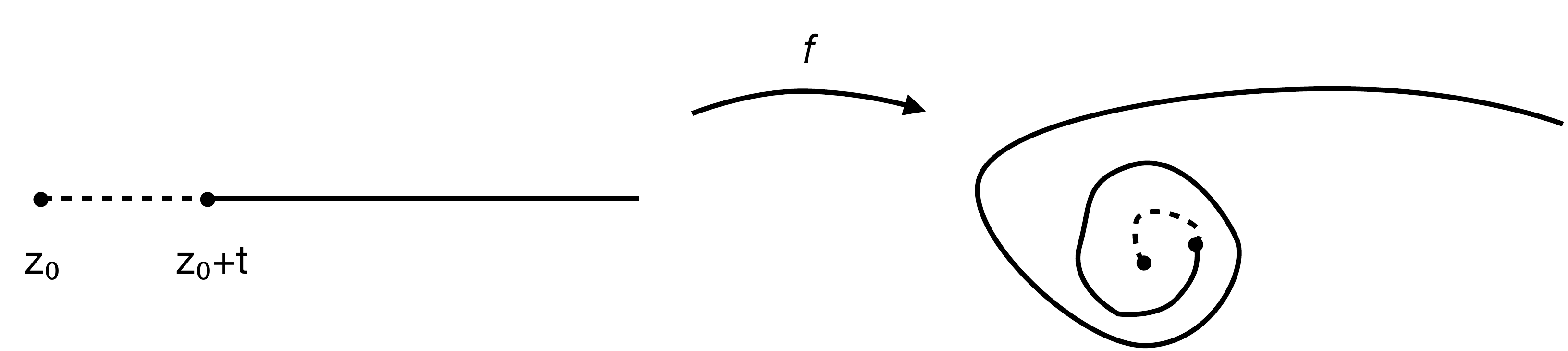}
\caption{$\arg \partial f(z_0)$ measures the total winding of the image curve around the point $f(z_0)$.}
\label{fig:winding}
\end{figure}
%\medskip

Even if above we used two quite different approaches to the complex logarithm of $f_z$, both methods lead to the same concept.

\begin{lemma}\label{le:agree}
 For any principal quasiconformal mapping $f:\C \to \C$, the geometric and analytic definition of  the principal branch of $\log f_z$ agree on a set of full measure.
\end{lemma}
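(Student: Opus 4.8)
The plan is to show that the two branches of $\log f_z$ — the analytic one built from the holomorphic flow $\lambda \mapsto f^\lambda$, and the geometric one built from the limit of difference quotients in \eqref{logs7} — agree on the set $E$ of full measure provided by Lemma \ref{le:simple}. Since both are genuine branches of the \emph{same} multivalued function $\log f_z$ (they differ at each point only by an integer multiple of $2\pi i$), it suffices to pin down the integer and show it is zero; equivalently, it is enough to prove the agreement at a \emph{single} point $z \in E$ along the whole flow, and then propagate by continuity in $\lambda$. Concretely, I would fix $z \in E$ and consider the two functions of $\lambda \in \DD$: the analytic branch $G(\lambda) := \log f^\lambda_z(z)$ (holomorphic in $\lambda$, normalized by $G(0) = 0$ since $f^0 = \id$), and the geometric branch $H(\lambda)$ defined via \eqref{logs7} applied to $f^\lambda$, using the principal branch \eqref{branch} of $\log\frac{f^\lambda(w)-f^\lambda(z)}{w-z}$ as furnished by Remark \ref{rem:flow}.

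The key steps, in order, are: (1) By Remark \ref{rem:flow} the logarithm $\log\frac{f^\lambda(w)-f^\lambda(z)}{w-z}$ has a single-valued continuous branch jointly in $(w,z,\lambda)$, determined by its value as $w \to \infty$ (principal normalization). Hence $H(\lambda)$, obtained from this by the limit \eqref{logs7}, is continuous in $\lambda$ on $\DD$. (2) At $\lambda = 0$ both branches vanish: $f^0(w) = w$ gives $\log\frac{f^0(w)-f^0(z)}{w-z} \equiv 0$, so $H(0) = 0 = G(0)$. (3) For each fixed $\lambda \in \DD$, the difference $G(\lambda) - H(\lambda)$ lies in $2\pi i \Z$, because $e^{G(\lambda)} = f^\lambda_z(z) = e^{H(\lambda)}$ — the exponential of either branch recovers $f^\lambda_z(z)$, which is nonzero on $E$ by Lemma \ref{le:simple}. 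Here I must check that $H(\lambda)$ as defined by \eqref{logs7} really does exponentiate to $f^\lambda_z(z)$: taking the real exponential of \eqref{logs7} gives $\lim_{t\to 0^+}\frac{f^\lambda(z+t)-f^\lambda(z)}{t}\cdot\frac{1}{1+\mu_\lambda(z)} = \frac{f^\lambda_z(z)(1+\mu_\lambda(z))}{1+\mu_\lambda(z)} = f^\lambda_z(z)$, using differentiability at $z$ and the pointwise Beltrami identity $f^\lambda_{\bar z}(z) = \mu_\lambda(z) f^\lambda_z(z)$, both valid on $E$ by Lemma \ref{le:simple}. (4) Now $\lambda \mapsto \frac{1}{2\pi i}(G(\lambda) - H(\lambda))$ is a continuous $\Z$-valued function on the connected set $\DD$, hence constant; its value at $\lambda = 0$ is $0$, so $G \equiv H$ on $\DD$. (5) Specializing to $\lambda = k = \|\mu\|_\infty$ recovers $f$ itself, and gives $\log f_z(z) = G(k) = H(k)$, i.e. the analytic and geometric principal branches coincide at $z$. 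Since $z \in E$ was arbitrary, they agree on the full-measure set $E$.

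The main obstacle I anticipate is step (3): making sure the geometric branch, defined through the limit of difference quotients and the auxiliary subtraction of $\log(1+\mu_\lambda(z))$, is honestly well-defined at each $z \in E$ and exponentiates to $f^\lambda_z(z)$ — this requires that the differential quotients $\frac{f^\lambda(z+t)-f^\lambda(z)}{t}$ genuinely converge (differentiability at $z$, all $\lambda$), stay away from $0$, and that the continuous branch of their logarithm matches up with the principal branch of \eqref{logs44} under the limit. The uniform-boundedness and differentiability clauses of Lemma \ref{le:simple} are exactly designed for this, together with continuity of $\lambda \mapsto \mu_\lambda(z)$ and of $\lambda \mapsto f^\lambda_z(z)$, which keep $H$ continuous and prevent the integer in (4) from jumping. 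A secondary subtlety is the interchange of the $w\to\infty$ normalization of the joint branch with the $t\to 0^+$ limit defining $\log f_z$; this is handled by noting both normalizations single out branches of the same function and only need to be reconciled at one convenient configuration, e.g. near $\lambda=0$ where everything is trivial. Once these continuity and non-vanishing facts are in place, the topological argument in (4) closes the proof with no further computation.
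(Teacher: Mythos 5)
Your proposal follows essentially the same route as the paper's proof: fix $z\in E$, regard both branches as functions of the flow parameter $\lambda$, normalize at $\lambda=0$, and propagate the identity across $\DD$ by a connectedness argument. The technical inputs you invoke (differentiability, nonvanishing of $f^\lambda_z$, uniform boundedness of difference quotients, the pointwise Beltrami identity) are exactly the clauses of Lemma~\ref{le:simple} the paper uses, and your step (3) verification that $e^{H(\lambda)}=f^\lambda_z(z)$ is correct and is the same computation.

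The one genuine gap is the \emph{Hence} in your step (1). Joint continuity of $(w,z,\lambda)\mapsto\log\frac{f^\lambda(w)-f^\lambda(z)}{w-z}$ on the set $\{w\neq z\}$ does \emph{not} by itself give continuity in $\lambda$ of the limit defining $H(\lambda)$: a pointwise limit of continuous functions need not be continuous, and the uniform boundedness clause of Lemma~\ref{le:simple} alone does not rescue this either. You sense the difficulty in your obstacles paragraph and gesture at Lemma~\ref{le:simple}, but the precise mechanism — which is what the paper uses — deserves to be named: for each fixed $t>0$ the function $a_t(\lambda,z)=\log\bigl[\tfrac{f^\lambda(z+t)-f^\lambda(z)}{t}\bigr]-\log(1+\mu_\lambda(z))$ is \emph{holomorphic} in $\lambda$, and the uniform boundedness of the difference quotients makes the family $\{a_t(\cdot,z)\}_{t\in(0,1)}$ locally uniformly bounded on $\DD$; Vitali's theorem then upgrades the pointwise limit (which exists by differentiability at $z$) to a \emph{holomorphic} function of $\lambda$, a fortiori continuous. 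Once this is in place, your integer-constancy argument in steps (3)--(4) closes the proof, though at that point you actually have holomorphy of $H$, and the paper's shorter conclusion — two holomorphic branches of $\log f^\lambda_z(z)$ agreeing at $\lambda=0$ must coincide — is available.
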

\begin{proof} 
\smallskip

%For further purposes i
It is useful to observe that  the definition of the principal branch of $\arg\bigl[f(z_0 + t) - f(z_0) \bigr] $ given above % \refeq{arg} 
can  equivalently be  obtained from the imaginary part of the analytic continuation 
$h(\lambda)=\log (f^\lambda (z_0 + t) - f^\lambda (z_0))$, when one sets
$h(0)=\log t.$

For the proof we only need to consider the  set $E \subset \C$ given by Lemma \ref{le:simple}. At any $z\in E$ define
$$ a_t(\lambda, z) =  \log \left[\frac{f^\lambda(z + t) - f^\lambda(z)}{t} \right] - \log[ 1 +  \mu_\lambda(z)]
$$
Then for a fixed $z \in E$, $\lim_{t\to 0+} a_t(\lambda,z)$ is holomorphic in $\lambda$, as a pointwise limit of bounded holomorphic functions (here we use the statement of uniform boundedness of the differential quotients in Lemma \ref{le:simple}). Further, the function vanishes at $\lambda = 0$ and one has $\exp\bigl(\lim_{t\to 0+} a_t(\lambda,z)\bigr)= \partial f^\lambda(z)$ for  every $\lambda \in \DD$.
\end{proof}

In particular, the definition of $\log f_z$  does not depend on how the mapping is embedded to a holomorphic flow.
\smallskip

Once the pointwise notion of   $\log f_z$ is settled,  one can ask for its function space properties. Here it follows from the work of  D. Hamilton \cite{Ham} that  $\log f_z \in  BMO(\C)$. He complexifies  an argument of  Reimann \cite{Reim} and estimates the $\lambda$-derivative 
 of the function. Indeed, writing $f^{\lambda + \epsilon} =  h_\epsilon \circ f^\lambda$ and using \cite[Thm. 5.5.6]{AIMb}  we have 
 \vspace{-.3cm}
 
 \begin{equation}
%\label{ }
\mu_{h_\epsilon} \left(f^\lambda(z)\right) =\;  \epsilon \,  \frac{\mu \, \| \mu \|_\infty}{\| \mu \|_\infty^2 - |\lambda|^2 |\mu|^2}\,  \frac{f^\lambda_z}{\; {\overline{ f^\lambda_z}}\; } \; + \;  {\mathcal O}(\epsilon^2) =: \epsilon \, \nu_\lambda \circ f^\lambda(z) \; + \;  {\mathcal O}(\epsilon^2)
\end{equation}
For the derivatives of $h_\epsilon$ we have at almost every point $( h_\epsilon)_{ z} = 1 + \epsilon S(\nu_\lambda)  \; + \;  {\mathcal O}(\epsilon^2)$  with 
$ ( h_\epsilon)_{\bar z} = \epsilon \nu_\lambda  \; + \;  {\mathcal O}(\epsilon^2)$, where $S$ is the Beurling transform, see \cite[Section 5.7]{AIMb}. Thus, from the chain rule we have
\vspace{-.2cm}

\begin{eqnarray}
 \partial_z f^{\lambda + \epsilon} -\partial_z  f^{\lambda}  & = &  \partial_z  h_\epsilon (f^\lambda) \, \partial_z f^\lambda - \partial_z f^\lambda +  \partial_{\bar z}  h_\epsilon (f^\lambda) \, {\overline{\partial_{\bar z} f^\lambda}}  \nonumber \\ 
 & = & \epsilon \, (S\nu_\lambda)(f^\lambda) \, \partial_z f^\lambda +  \epsilon \, \nu_\lambda(f^\lambda)  \, {\overline{\partial_{\bar z} f^\lambda}}\; + \;  {\mathcal O}(\epsilon^2) \nonumber
\end{eqnarray}
Thus
\vspace{-.2cm}

$$\frac{\partial_\lambda f_z^\lambda}{f_z^\lambda} =  (S\nu_\lambda)(f^\lambda) + \nu_\lambda(f^\lambda)  \, \overline{  \, \mu_\lambda\, } \,\frac{\; {\overline{ f^\lambda_z}}\;  }{f^\lambda_z}\,  = \,  (S\nu_\lambda)(f^\lambda)\, + \, \, \overline\lambda \, \,\frac{ |\mu|^2}{\| \mu \|_\infty^2 - |\lambda|^2 |\mu|^2}
$$
\smallskip
\vspace{-.2cm}

\noindent Since  by another theorem of Reimann,  quasiconformal mappings preserve the space  $BMO(\C)$, the right hand side has $BMO$-norm uniformly bounded for $|\lambda| \leqslant k_0 < 1$. By integrating along a radius from $\lambda = 0$ to $\lambda = \|\mu\|_\infty$ we obtain

\begin{proposition}{\rm \cite{Ham}} \label{prop:hamilton}
Suppose $f\in W^{1,2}_{loc}(\C)$ is a  principal  quasiconformal mapping. Then
$$ \log f_z \in BMO(\C).
$$
\end{proposition}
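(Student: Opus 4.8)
The plan is to follow the computation already outlined in the text and turn it into a clean proof. The key idea is the $\lambda$-derivative formula obtained via holomorphic motions: if $\{f^\lambda\}_{\lambda\in\DD}$ is the holomorphic flow from \eqref{eq:flow} through the principal solution $f$, then $f^0\equiv\id$, so $\log f^0_z\equiv 0$, and we recover $f$ at $\lambda=\|\mu\|_\infty$. Thus $\log f_z = \int_0^{\|\mu\|_\infty}\partial_\lambda \log f^\lambda_z\, d\lambda$ along the radial segment, and it suffices to bound $\partial_\lambda\log f^\lambda_z = \partial_\lambda f^\lambda_z/f^\lambda_z$ in $BMO(\C)$ uniformly for $\lambda$ in that segment.

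First I would justify the chain-rule computation rigorously. Writing $f^{\lambda+\epsilon}=h_\epsilon\circ f^\lambda$ and using the composition formula for Beltrami coefficients \cite[Thm.~5.5.6]{AIMb}, one gets $\mu_{h_\epsilon}(f^\lambda(z))=\epsilon\,\nu_\lambda(f^\lambda(z))+\mathcal O(\epsilon^2)$ with the explicit $\nu_\lambda$ displayed in the excerpt. Then by the standard Taylor expansion of solutions of the Beltrami equation in terms of the Beurling transform $S$ (again \cite[Section~5.7]{AIMb}), $(h_\epsilon)_z = 1+\epsilon S\nu_\lambda+\mathcal O(\epsilon^2)$ and $(h_\epsilon)_{\bar z}=\epsilon\nu_\lambda+\mathcal O(\epsilon^2)$ a.e. Differentiating the chain rule $\partial_z(h_\epsilon\circ f^\lambda)=(\partial_z h_\epsilon)(f^\lambda)f^\lambda_z+(\partial_{\bar z}h_\epsilon)(f^\lambda)\overline{f^\lambda_{\bar z}}$ and dividing by $f^\lambda_z$ yields
\[
\frac{\partial_\lambda f^\lambda_z}{f^\lambda_z}=(S\nu_\lambda)(f^\lambda)+\overline\lambda\,\frac{|\mu|^2}{\|\mu\|_\infty^2-|\lambda|^2|\mu|^2},
\]
after substituting $\mu_\lambda=\lambda\mu/\|\mu\|_\infty$ and $f^\lambda_{\bar z}=\mu_\lambda f^\lambda_z$.

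Next I would bound the right-hand side in $BMO(\C)$ uniformly for $|\lambda|\le k_0<1$ (it suffices to take $k_0$ slightly above $\|\mu\|_\infty$). The second term is a bounded measurable function with sup-norm controlled by $|\lambda|/(\|\mu\|_\infty^2-|\lambda|^2\|\mu\|_\infty^2)$, hence uniformly in $BMO$. For the first term, $\nu_\lambda$ is bounded with compact support (since $\mu$ is), so $S\nu_\lambda\in BMO(\C)$ with norm bounded by a constant times $\|\nu_\lambda\|_\infty$, uniformly in $\lambda$; and then by Reimann's theorem \cite{Reim} a $K$-quasiconformal map preserves $BMO(\C)$ with norm bound depending only on $K$, so $(S\nu_\lambda)\circ f^\lambda\in BMO(\C)$ uniformly in $\lambda\in\overline{B(0,k_0)}$. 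Finally, integrating the $BMO$-valued map $\lambda\mapsto \partial_\lambda\log f^\lambda_z$ along the segment $[0,\|\mu\|_\infty]$ and using that $BMO$ is a Banach space (so the integral of a continuous bounded $BMO$-valued curve again lies in $BMO$, with norm at most the length times the sup of the norms) gives $\log f_z\in BMO(\C)$.

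The main obstacle is making the differentiation-under-composition step and the passage to a $BMO$-valued integral fully rigorous: one must ensure that $\lambda\mapsto \partial_\lambda f^\lambda_z/f^\lambda_z$ is genuinely the (a.e.-pointwise, and $BMO$-sense) derivative of $\lambda\mapsto \log f^\lambda_z$ and that it depends continuously on $\lambda$ in $BMO$, so that the fundamental theorem of calculus applies in the Banach space $BMO(\C)$. This is exactly where Lemma~\ref{le:simple} is used: it guarantees that on a full-measure set $E$ the maps $f^\lambda$ are differentiable with $f^\lambda_z\ne 0$ and that $\lambda\mapsto \log f^\lambda_z(z)$ is holomorphic, so the pointwise identification of the derivative is legitimate, and the explicit formula above shows the derivative is locally bounded in $\lambda$ with values in $BMO$. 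Granting these regularity points, which are by now standard for holomorphic flows of quasiconformal maps, the estimate is immediate and the proposition follows.
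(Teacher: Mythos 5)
Your proposal is correct and reproduces the paper's own argument: embedding $f$ in the holomorphic flow $f^\lambda$, computing $\partial_\lambda \log f^\lambda_z = (S\nu_\lambda)(f^\lambda) + \overline{\lambda}\,|\mu|^2/(\|\mu\|_\infty^2-|\lambda|^2|\mu|^2)$, invoking Reimann's theorem that quasiconformal maps preserve $BMO$ to control the first term uniformly in $|\lambda|\leqslant k_0<1$, and integrating radially to $\lambda=\|\mu\|_\infty$. The extra elaboration you add (Lemma \ref{le:simple} to justify pointwise differentiation in $\lambda$, and the Banach-space fundamental theorem of calculus to make the radial integration rigorous) is exactly the level of detail the paper leaves implicit, so this is the same proof, fleshed out; one small slip is that the sup-norm of the second term is bounded by $|\lambda|/(1-|\lambda|^2)$, not $|\lambda|/(\|\mu\|_\infty^2-|\lambda|^2\|\mu\|_\infty^2)$, though the conclusion of uniform boundedness is unaffected.
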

\smallskip

In particular, $\arg f_z$ is exponentially integrable, i.e. $$e^{b|\arg f_z|} \in L^1_{loc} \qquad \mbox{for some positive constant} \;  b.$$ One of the key points of the present  work is  finding the optimal form of the exponential integrability. This will be done in Corollary \ref{cor:argumentintegrability} where we give  the sharp bounds for  $b$ in terms of the distortion $K(f)$.
\medskip

To complete the subsection we need to  justify Lemma \ref{le:simple}.

\begin{proof}[Proof of Lemma \ref{le:simple}]
After  \cite[Lemma 3.7]{AIPS} one only needs to verify the (simultaneous and uniform for all $\lambda$)  differentiability almost everywhere. Recall that in plane continuous functions $f\in W^{1,p}_{loc}(\C)$ with $p>2$ are differentiable almost everywhere. A proof is based on  the classical Morrey estimate (see \cite[Section 4.5.3]{EG}),
\begin{equation}\label{morrey}
|g(y)-g(x)|\leqslant c_p|y-x|\left(\frac{1}{B(x,|y-x|)}\int_{B(x,|y-x|)}|Dg|^p\right)^{1/p}, \quad g\in W^{1,p}_{loc}(\C), \nonumber 
\end{equation}
where $p>2.$
The differentiability a.e.  follows by applying the above estimate to the function $
g(y)= f(y)-f(x)-Df(x)(y-x)$ and using a Lebesgue point argument. 

For our purposes we only need to quantify this a little bit. Denote
% by $L_f(x)$  the infinitesimal Lipschitz constant of $f$ at $x$, i.e. $$L_f(x):=\limsup_{y\to x}
%|f(y)-f(x)|/|y-x|,$$ and 
by $Q_f(x)$ the maximal difference quotient at $x$, given by
 $$Q_f(x):= \sup_{|y-x|\leqslant 1}\frac{|f(y)-f(x)|}{|y-x|}.$$ As a direct consequence of Morrey's estimate, any continuous $f\in W^{1,p}_{loc}(\C)$ satisfies the pointwise estimate
$$
%L_f(x) \leq 
Q_f(x)\leqslant c_pM_p(Df)(x),
$$
where  $M_p(g)(x):=\sup_{r>0} \left(\frac{1}{B(x,r)}\int_{B(x,r)}|g|^p\right)^{1/p}.$ Especially, when 
derivatives of $f$ 
belong to $ L^{p}(\C)$ 
the weak (1,1)-continuity of the maximal function yields for any $s >0$
\begin{equation}\label{morrey2}
%|\{ L_g> s\}|\leq 
|\{ Q_f> s\}|\leqslant c'_p \; s^{-p}\|Df\|_p^p.
\end{equation}

In our situation,   for any given $\delta <1$  we may pick $p = p(\delta) >2$ so close to 2 that the standard $\,\mathscr L^p$ --properties of the Beurling operator and the Neumann series representation (see \cite[p. 163]{AIMb}) enable one to write, for  any $\lambda \in \DD$,
\begin{equation}\label{morrey3}
f^{\lambda}(z)-z=\sum_{n=1}^\infty\lambda^n f_n(z)\quad {\rm with}\;\; \|Df_n\|_p\leqslant c_0 \, \delta^{-n/2}.
\end{equation}
Here  $c_0$ depends only on the size of  the support of $\mu$ in \eqref{eq:flow},  and  each $f_n\in C(\C) \cap W_{loc}^{1,q}(\C)$ for every $q<\infty$. The series converges locally uniformly in $\C$.

%It is elementary that a 
%A function $f$ is differentiable at $x$ if and only if for any $\varepsilon >0$ there is a  decomposition  $f=g+h$, 
%where  $g$ is differentiable at $x$ and $Q_h(x)<\varepsilon .$

For $n\geqslant1$ write $h_{n,\lambda}:=\sum_{k=n}^\infty\lambda^k f_k$ . Fix $\delta <1$, denote $A_k:=\{ Q_{f_k}> \delta^{-3k/4}\}$, and use (\ref{morrey}) to estimate  $|A_k|\leqslant c_0^p \, \delta^{-kp/2}\delta^{3kp/4}= c \,\delta^{kp/4}.$ Next, write  $F_n:=\bigcup_{k=n}^\infty A_k$, so that
 $|F_n|\lesssim \delta^{np/4}$ while if $x\in F_n^c$, we have
\begin{equation}\label{morrey2''}
%L_{h_{n,\lambda}}(x)\leq 
Q_{h_{n,\lambda}}(x)\leqslant \sum_{k=n}^\infty \delta^k \delta^{-3k/4} \lesssim \delta^{n/4}
 \quad \mbox{for every }\; |\lambda| \leqslant \delta. % \in \DD.
\end{equation}
Clearly this  implies that  the set
$E_{\delta}:= \cap_{n=1}^\infty F_n$ has full measure with $ \lim_{n\to \infty} Q_{h_{n,\lambda}}(x) =0$ for every $|\lambda| \leqslant \delta$ and $x \in E_\delta$.  

 Pick now a set $\widetilde E$ of full measure such that each $f_k$ in (\ref{morrey3}) is differentiable at every point of $\widetilde E$. 
 Especially, for every $|\lambda| \leqslant \delta$ \, the function $f^{\lambda}$  is differentiable at each $x\in \widetilde E\cap E_\delta,$ with
 $$ Df^{\lambda}(x) = Id +  \sum_{k=1}^\infty \lambda^k Df_k(x).
 $$
 %satisfies the above mentioned condition for differentiability  at each $x\in \widetilde E\cap E_\delta,$ 
 Whence we may choose $E:=\widetilde E\cap\bigcup_{\ell=1}^\infty E_{1-1/\ell}.$ Finally,
the statement about uniform boundedness of the differential quotients with respect to $\lambda$ in compacts follows directly from \eqref{morrey2} and \eqref{morrey2''}.
\end{proof}

\section{Interpolation with complex exponents and Burkholder integrals}\label{se:burkholder}

\subsection{Interpolation with complex exponents} %\label{se:interpolation}

Let $\,(\Omega,\sigma)\,$ be a measure space and let $\mathscr M(\Omega,\sigma) \,$ denote the space of  complex-valued $\,\sigma$ -measurable functions on $\Omega$. We consider  $\,\mathscr L^p (\Omega,\sigma)\,$ spaces in which the (quasi-)norms are defined by
$$
  \| \Phi\|_p  = \left(\int_\Omega |\Phi(x)|^p \;\textrm{d}\sigma(x)\right)^{\frac{1}{p}}\,,\;\;\;0< p<\infty\;, \;\;\; \textrm{and}\;\;\|\Phi\|_\infty \;= \underset{x\in\Omega}{\textrm{ess\,sup}} \,\;|\Phi(x)|.
$$
We shall consider analytic families $\Phi_\lambda$ of measurable
functions in $\Omega$, i.e. jointly measurable functions $ (x, \lambda) \mapsto \Phi_\lambda (x)$ defined on $\Omega \times U$, where $U\subset\C$ is a domain, and
such that for each fixed $x \in\Omega$  the map $\lambda \mapsto \Phi_\lambda(x)$ is analytic in $U$. The family is
said to be \emph{non-vanishing} if there exists a set $E \subset\Omega$ of $\sigma$-measure zero such that $\Phi_\lambda(x)\not=0$ 	for all $x \in\Omega\setminus E$ and for  all $\lambda \in U$. 

In the following we give a generalization  of the
interpolation results from \cite{AIPS} to  complex exponents. In fact, restricting below  in \eqref{lisa66} to the real interval $\beta \in \left[\, p_0, \, \frac{p_0}{2}\left(1+ \frac{1}{|\lambda|}\right)\right]$ one arrives at the bounds of  \cite[Lemma 1.6]{AIPS}.

\begin{lemma}[Interpolation Lemma with complex exponents]\label{le:interpolation}
Suppose  \\ $\;\{\Phi_{\lambda}\,;\; |\lambda| < 1\}\, \subset\,\mathscr M
(\Omega, \sigma)\,$ is a non-vanishing analytic family of functions, parametrized by  complex numbers $\lambda \in \DD$, such that for some  $p_0>0$,
\[ \Phi_0 \equiv 1, \quad \mbox{and} \quad  \| \Phi_\lambda \|_{p_0} \leqslant 1 \quad \text{ for every } \, \lambda \in \DD.
 \]

Then, for every $|\lambda|<1$ and for every complex  exponent $\beta \in \mathbb{C}$ contained in the closed ellipse
\begin{equation}
\label{lisa66}
 \, |\beta| + \, |\beta - p_0| \leqslant \frac{p_0}{|\lambda|}, 
\end{equation}
  we have
\begin{equation}\label{eq:betaint}
\int_\Omega \left| \Phi_\lambda^{\; \beta}  \right| \; \textrm{d} \sigma \leqslant 1.
\end{equation}
The choice of branch in \eqref{eq:betaint} is the natural one, determined by the condition $\log \Phi_0 = 0$. 
\end{lemma}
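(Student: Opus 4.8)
The plan is to strip the statement down to a one–complex–variable problem and then obtain \eqref{eq:betaint} as a single application of Jensen's inequality to a cleverly chosen probability measure on $\overline{\DD}$; the ellipse \eqref{lisa66} should fall out as exactly the condition making that measure exist.

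\emph{Reductions.} For $x$ outside the $\sigma$–null set on which some $\Phi_\lambda$ vanishes, $\lambda\mapsto\Phi_\lambda(x)$ is zero–free and analytic on the simply connected disc $\DD$, so it has a single–valued analytic logarithm $g(x,\lambda):=\log\Phi_\lambda(x)$ normalized by $g(x,0)=\log\Phi_0(x)=0$; this is the branch meant in \eqref{eq:betaint}, and $\bigl|\Phi_\lambda^{\,\beta}(x)\bigr|=\exp\bigl(\re(\beta\,g(x,\lambda))\bigr)$. Since $\{\Phi_{e^{i\theta}\lambda}\}_\lambda$ satisfies the same hypotheses with the same normalized logarithm, I may rotate and assume $\lambda=r\in[0,1)$; the case $r=0$ is immediate ($\Phi_0\equiv1$, so the left side of \eqref{eq:betaint} equals $\sigma(\Omega)\le\|\Phi_0\|_{p_0}^{p_0}\le1$), so assume $0<r<1$. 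It also suffices to treat the strict inequality $|\beta|+|\beta-p_0|<p_0/r$: the borderline $\beta$ follows from the strict case applied at $\lambda=tr$, $t\uparrow1$ (then $\beta$ lies in the open ellipse attached to $tr$), passing to the limit by Fatou since $\Phi_{tr}^{\,\beta}(x)\to\Phi_r^{\,\beta}(x)$ pointwise.

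\emph{Main step.} Fix $0<r<1$ and $\beta$ with $|\beta|+|\beta-p_0|<p_0/r$; put $c:=\beta/p_0$ and $F(x,\lambda):=p_0\,g(x,\lambda)$, so $F(x,\cdot)$ is analytic on $\DD$ with $F(x,0)=0$ and $\int_\Omega e^{\re F(x,\lambda)}\,d\sigma=\|\Phi_\lambda\|_{p_0}^{p_0}\le1$ for all $\lambda\in\DD$. The crux is to produce a Borel probability measure $\mu$ on $\overline{\DD}$, supported on $\{0\}$ together with some circle $\{|\lambda|=R\}$ with $r<R<1$, whose moments satisfy $\int\lambda^{n}\,d\mu(\lambda)=c\,r^{n}$ for every $n\ge1$. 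Once this is done, on $\{|\lambda|\le R\}$ the function $F(x,\cdot)$ equals its uniformly convergent power series (constant term $0$), so $\int F(x,\lambda)\,d\mu(\lambda)=cF(x,r)$, hence $\int\re F(x,\lambda)\,d\mu(\lambda)=\re\bigl(\beta\,g(x,r)\bigr)=\log\bigl|\Phi_r^{\,\beta}(x)\bigr|$, and Jensen in $\lambda$ (for fixed $x$) followed by Tonelli gives
\[
\int_\Omega\bigl|\Phi_r^{\,\beta}(x)\bigr|\,d\sigma(x)
=\int_\Omega e^{\int\re F(x,\lambda)\,d\mu(\lambda)}\,d\sigma(x)
\le\int_{\overline{\DD}}\Bigl(\int_\Omega e^{\re F(x,\lambda)}\,d\sigma(x)\Bigr)d\mu(\lambda)
\le 1 .
\]

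\emph{Construction of $\mu$, and the main obstacle.} Write $\mu=(1-m)\delta_0+\nu$ with $\nu=\tfrac1{2\pi}w(\theta)\,d\theta$ on $\{|\lambda|=R\}$, $m=\nu(\{|\lambda|=R\})$. Matching the prescribed moments forces $\widehat w(-n)=c\,\rho_0^{\,n}$ ($n\ge1$) with $\rho_0:=r/R\in(r,1)$, that is $w(\theta)=m+2\,\re\!\bigl(\bar c\,\rho_0 e^{i\theta}/(1-\rho_0 e^{i\theta})\bigr)$; as $\theta$ varies, $\rho_0 e^{i\theta}/(1-\rho_0 e^{i\theta})$ traces the circle of center $\rho_0^2/(1-\rho_0^2)$ and radius $\rho_0/(1-\rho_0^2)$, so $w\ge0$ exactly when $m\ge\psi(\rho_0)$, where $\psi(\rho):=2\rho(|c|-\rho\,\re c)/(1-\rho^{2})\ge0$. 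This paragraph is where the real work lies: one must check that nonnegativity of $w$ together with $\widehat w(0)=m\le1$ is governed by precisely \eqref{lisa66}. I would argue as follows — on $\{|c|+|c-1|=1/\rho\}$ one has $\re c=\tfrac12(1-\rho^{-2})+|c|/\rho$, whence $\psi(\rho)=1$; consequently $G(c):=2r|c|-2r^{2}\re c+r^{2}-1$ (a convex function of $c$) vanishes on $\{|c|+|c-1|=1/r\}$, while $G(0)=r^{2}-1<0$, so by the maximum principle $G<0$ throughout the open ellipse. Thus under the strict hypothesis $\psi(r)<1$, and by continuity of $\psi$ on $[r,1)$ I may fix $\rho_0\in(r,1)$ with $\psi(\rho_0)\le1$; taking $m:=\psi(\rho_0)$ yields $w\ge0$ and $m\le1$, so $\mu$ has the required form. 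Everything else — joint measurability of $(x,\lambda)\mapsto g(x,\lambda)$, the appeals to Tonelli and Fatou, and the power–series manipulation — is routine.
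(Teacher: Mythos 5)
Your proof is correct, but it follows a genuinely different route from the paper. The paper's argument is \emph{variational in $\wp$}: one fixes an arbitrary probability density $\wp$ on $\Omega$, forms the single holomorphic function $f(\lambda)=I^{-1}\!\int_\Omega\wp\log\Phi_\lambda\,d\sigma$ (with $I=\int\wp\log\wp$), uses Jensen in $x$ to see that $f$ maps $\DD$ into the half-plane $\{\re z\le\tfrac12\}$ with $f(0)=0$, applies the Schwarz lemma to locate $f(\lambda)$ in a hyperbolic disk, reads off the linear constraint $\re(\beta w)\le 1$ on that disk (which is precisely the ellipse condition), and finally specializes $\wp\propto|\Phi_\lambda^\beta|$ to extract \eqref{eq:betaint}. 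Your argument is \emph{primal}: for each fixed $x$ you regard $\lambda\mapsto\log\Phi_\lambda(x)$ as the analytic object, construct a bona fide probability (Jensen-type representing) measure $\mu$ on a compact subdisk of $\DD$ satisfying $\int h\,d\mu=(1-c)h(0)+c\,h(r)$ for analytic $h$ with $c=\beta/p_0$, apply Jensen in $\lambda$ pointwise in $x$ to get $|\Phi_r^\beta(x)|\le\int|\Phi_\lambda(x)|^{p_0}\,d\mu(\lambda)$, and then integrate out $x$ by Tonelli. The ellipse appears in your proof as exactly the nonnegativity and normalization constraint ($\psi(\rho_0)\le 1$, $m\le1$) making the ansatz $\mu=(1-m)\delta_0+\tfrac1{2\pi}w\,d\theta$ a probability measure, and your verifications (the circle traced by $\rho_0e^{i\theta}/(1-\rho_0e^{i\theta})$, the identity $\psi(\rho)=1$ on $\{|c|+|c-1|=1/\rho\}$, the convexity/interior-strictness argument for $G$, the continuity argument to push $\rho_0$ slightly above $r$) are all sound. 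What your approach buys is conceptual transparency: it makes the representing measure explicit and collapses the whole proof to one application of Jensen, with the ellipse emerging as a positivity condition; what the paper's approach buys is that it sidesteps the Fourier/moment-matching computation and the ansatz for $\mu$ altogether, at the cost of the non-obvious final specialization of $\wp$. Both arguments exploit the same Schwarz/Harnack constraint on half-plane-valued holomorphic functions, realized on the two dual sides of the same inequality.
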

\medskip

\begin{proof} 
By considering the analytic family $\Phi_\lambda^{\, p_0/2}$ we may restrict our attention to the $p_0=2$ case, which will later make calculations simpler.
Observe that our assumption $\|\Phi_0\|_2 \leqslant 1$ implies $\sigma(\Omega) \leqslant 1.$ Actually, by the maximum principle for analytic $\mathscr L^2$-valued functions, we may further assume the strict inequality
\begin{equation}\label{eq:size}
\sigma(\Omega) <1.
\end{equation}
Otherwise $\Phi_\lambda$ would be constant in $\lambda $, as we have
$\|\Phi_\lambda\|_2\leqslant 
1$ for all $\lambda\in \DD.$
We may also assume in the proof that $0<c\leqslant |\Phi_\lambda(x)|\leqslant C<\infty $
uniformly for all $(x,\lambda)\in \Omega\times U$,  as the reduction of the 
general situation to this is done exactly as in \cite[Section 2]{AIPS}. Similarly, 
we choose an arbitrary positive probability density $\wp$, uniformly bounded away from $0$ and $\infty$,
 $$\, \wp\in \mathscr M(\Omega, \sigma)\,,\;\;\;
 \quad\; \|\, \wp\,\|_1 = \int_\Omega \;\wp(x)\,\textrm{d}\sigma(x)\;= 1.\;$$
By Jensen's inequality using the convexity of $x\mapsto x\log (x)$ and \refeq{eq:size} we have $I:=\int_\Omega \wp(x)\log \wp(x)\, dx>0.$
Temporarily assuming that $\,\wp\,$ is fixed, we consider the holomorphic function $f$ in the unit disk
$$
f(\lambda)=\frac{1}{I} \int \wp(x) \log \Phi_\lambda (x) \,\textrm{d}\sigma(x)\;.
$$
Again by Jensen's inequality  we have the bound
\begin{eqnarray}
\label{lasku}
2\re f(\lambda)-1 &=&\frac{1}{I} \int_\Omega \wp \log \frac{|\Phi_\lambda|^2}{\wp}\,\textrm{d}\sigma \leqslant \, \frac{1}{I} \log \left(\int_\Omega \wp\, \frac{ |\Phi_\lambda|^2}{\wp}\,\textrm{d} \sigma\right) \nonumber  \\ &=& \frac{1}{I}\;\log \|\Phi_\lambda\|_2^2  \;\leqslant 0 \;,\quad \,\textrm{for} \;\;|\lambda| < 1. \nonumber 
\end{eqnarray}
Thus $f$ maps the unit disk into a half-plane $\re f(\lambda) \leqslant 1/2$, while $f(0)=0$ by our assumption $\Phi_0 \equiv 1$. 
At this stage we appeal to Schwarz lemma and deduce that  for any $0\leqslant  k <1$, the image $f(\{ |\lambda| \leqslant k \})$ lies in a hyperbolic disk $D_k$,  centered at $0$,  of the above half-plane. Precisely,
\[
f(\{ |\lambda| \leqslant k \}) \subset  D_k=\left\{ \frac{z}{1+z} : |z| \leqslant k \right\}.
\]

Our objective is to find all  exponents $\beta \in \C$ such that for all $|\lambda| \leqslant k$,
\begin{equation}\label{eq:goal}
 \re \left(\beta f(\lambda) \right) = \frac{1}{\int \wp(x) \log \wp(x)} \re \left(\beta \int \wp(x) \log \Phi_\lambda(x) \;\textrm{d}\sigma(x) \right) \leqslant 1.
\end{equation}
Equivalently, we aim at  the estimate $$\int_\Omega \wp(x) \log \frac{|\Phi_\lambda(x)^\beta|}{\wp(x)}\,\textrm{d}\sigma(x) \leqslant 0,
$$
and once this is achieved, by specialising the choice of $\wp$, that is,  choosing  $\wp(x):=\left| \Phi_\lambda(x)^\beta  \right|\left(\int_\Omega \big|\Phi_\lambda^\beta\big|\right)^{-1}$, 
%one computes that this will indeed imply 
we will obtain our assertion
\[
 \log \left( \int_\Omega \left| \Phi_\lambda^\beta  \right| \;\textrm{d}\sigma \right) \leqslant 0.
\]

Given $|\lambda| \leqslant k$, the condition \refeq{eq:goal} is now ensured by
\begin{equation}
\label{left1}
 \re \bigl( \beta w\bigr) \leqslant 1, \quad \text{for all $w \in D_k$},
\end{equation}
as the range of $f(\lambda)$ lies in $D_k$.  To state this requirement more explicitly,   note  first that as
$$ \frac{z}{1+z}  =  \frac{z}{1+z} \cdot \frac{1 + \overline z}{\;1-|z|^2} - \frac{|z|^2}{1-|z|^2} \, ,
$$
 inequality  \eqref{left1} takes the form
\begin{equation}
\label{lisays}
\re  \left( \beta \; z \;   \frac{1+ \overline z}{1+z}\;  \right) - |z|^2 \; \re  \beta \leqslant 1 - |z|^2
,\qquad |z| \leqslant k.
\end{equation}
Here only the first term depends on the argument of $z$, with $ z \mapsto \; z \;   \frac{1+ \overline z}{1+z}\;$ preserving the circle of radius $k$. Thus \eqref{left1} is equivalent to
\begin{equation}
\label{lisa4}
 k \,|\beta| - k^2 \;  \re  \, \beta \; \leqslant \; 1-k^2\nonumber
\end{equation}
By simple algebra, we have here the equality if and only if 
\begin{equation}
\label{eq:repre}
\beta =  1+\frac{1}{k}  \cos \theta + i \frac{\sqrt{1-k^2}}{k}\sin \theta, \quad \mbox{ for some } \theta \in [0,2 \pi].
\end{equation} 
Thus the extremal $\beta$ lie on  the ellipse with foci $\{0,2\}$ and eccentricity $k$,  so that
   \eqref{left1} is equivalent to $k|\beta| + k|\beta -2| \leqslant  2$. 
For a general exponent $p_0>0$, the ellipse takes the form of \eqref{lisa66}.   
\end{proof}

\begin{remark} \label{kuusi} Given $\beta$, 
the left hand side of \eqref{lisays} attains its maximum over $\{ \lambda : |\lambda | = k \}$  
at the point where 
$$\arg \beta = 2 \, \arg(1+\lambda) -  \arg\, \lambda ,$$
that is when
$$\beta =   \frac{1+ \lambda}{\lambda}\, (1+\lambda) \,s(\lambda), \qquad s(\lambda) \in \R_+
$$
Testing this requirement against \eqref{left1} shows that $s(\lambda) \leqslant 1/\bigl(1 + \re  \lambda \bigr)$.
Therefore  the equality in \eqref{left1} is attained at $\lambda, |\lambda|=k$, if and only if
\begin{equation}
\label{left2}
 \beta =\frac{(1+\lambda)^2}{\lambda(1+\re \lambda)}= \left(1+ \frac{1}{\lambda} \right) \frac{1+\lambda}{1+\re \lambda}.
\end{equation}
\end{remark}
\bigskip

\subsection{Burkholder integrals}
Let $f \colon \C \to \C$ be the principal solution of a Beltrami equation
\begin{equation}
\label{eq:beltrami}
f_{\bar z}(z)=\mu(z) \, f_z(z), \qquad |\mu(z)| \leqslant k\, \chi_{\mathbb{D}}(z), \quad 0\leqslant k<1.
\end{equation}
In studying the rotation spectrum of bilipschitz or quasiconformal mappings we are faced with the question for which  exponents $\beta \in \C$ is the complex power $(f_z)^\beta$ locally integrable? It turns out that the universal bounds are given exactly by \eqref{lisa66} with $p=2$, $|\lambda| = k$, that is, in terms of an ellipse having foci $\{0,2\}$ and eccentricity determined by the ellipticity constant $k$ of the equation. See Theorem \ref{co:localcomplexintegrability} for the precise statement.

In fact,  we are going to carry out our analysis in the weighted setting and  consider  the so called Burkholder type integrals. Here recall the  functionals introduced by Burkholder \cite{Bu1}, which applied to the derivatives of a map $f:\R^2 \to \R^2$ take the form
\begin{eqnarray}
B_p(Df) & = &  \frac{1}{2} \,\Big(\; p \; J(z,f)   + ( 2 - p)\; |Df|^2 \; \Big)\cdot |Df|^{p-2} \nonumber \\
& &= \;  \; \big(\;|f_{z}|\,-\,(p-1)\,|f_{\bar{z}}| \;\big)\cdot \big(\;|f_z|\;+\;|f_{\bar{z}}|\;\big)^{p-1},\, \qquad  p \in \R. \nonumber
\end{eqnarray}

Originally the functional was discovered by Burkholder in his studies of optimal martingale inequalities - since then optimal integral identities related to $B_p$, and in particular its conjectured quasiconcavity  (for $|p-1| \geqslant1 $) have been of wide interest. For recent advances and background see e.g. \cite{AIPS}, \cite{Ban}, \cite{Iw1}. 

Here we search for  corresponding functionals determined by a complex 
%In order to introduce this family of functionals, fix a complex
 parameter $p \in \C \setminus B(1,1)$. We define an auxiliary unimodular function $\rho=\rho(z)$, $|\rho| \equiv 1$, by requiring that the complex numbers
%\begin{equation}
%\label{uus6}
 %\frac{1}{p}  \quad  \mbox{ and }  \quad   \frac{\rho(z) |\mu(z)|}{1 + \rho(z) |\mu(z)|} 
 %\quad \mbox{ have the same argument.}
%\end{equation}
\begin{equation}
\label{uus6}
p \, \rho(z)   \quad  \mbox{ and }  \quad   1 + \rho(z) |\mu(z)| \quad \mbox{ have the same argument.}
\end{equation}

\begin{theorem} \label{main}
Suppose  we are given  a complex parameter $p$ with $1 \leqslant |p - 1| \leqslant \frac{1}{k} $ and an exponent $\beta \in \C$ with 
    \begin{equation}
\label{uus7}
 |\beta| + |\beta -2| \leqslant  2 |p-1|.
 \end{equation}
 Then for every  principal solution to \eqref{eq:beltrami} we have
\begin{equation}
\label{uus8}
 \frac{1}{\pi} \int_{\mathbb{D}} \Bigl( \Bigl| |f_{z}| + \rho |f_{\bar z}|\Bigr|- |p|\,|f_{\bar z}|  \Bigr)
\left| \Bigl(f_{z} + \rho |\mu| f_{z} \Bigr)^{\beta-1} \right| \leqslant 1,
\end{equation}
where the unimodular function $\rho = \rho(z)$ is determined  by \eqref{uus6}.
\smallskip
\vspace{.1cm}

The estimate holds as an equality for $f(z) \equiv z$. Furthermore, when $\beta$ is determined (uniquely) in terms of $p$ by the equations
\begin{equation}
\label{eq:betap}
|\beta|+|\beta-2|=2|p-1| \quad \mbox{ and }Ê\quad \re(\beta/p)=1,
%\beta=p \cdot \frac{\frac{p}{p-1}}{\re \frac{p}{p-1}}, \quad \mbox{(see Figure \ref{fig:ellipse0} for illustration)}
\end{equation}
we  have the equality in \eqref{uus8} for every power map of the form
$$ \hspace{1.5cm}f(z) = 
%\frac{z}{|z|} |z|^{\frac{1-\rho k}{1+ \rho k}}, \hspace{.8cm}  \mbox{where \, \,}   
\frac{z}{|z|} |z|^{\frac{1-\eta}{1+ \eta}}, \hspace{.8cm}  \mbox{where \, \,}   
% p \frac{\rho k}{1+\rho k} \ge 0\quad {\rm and}\;\; |\rho|=1.
 p \frac{\eta}{1+\eta}  \in [0,1] \quad {\rm and}\;\; %|\mu_f| \equiv  
 |\eta|=k.
$$
%$$ \hspace{1.5cm}f(z) = \frac{z}{|z|} |z|^{\frac{1-\eta}{1+ \eta}}, \hspace{.8cm}  \mbox{where \, \,}    p \frac{\eta}{1+\eta} \ge 0 
% \mbox{ \, with \, \,} |\eta| \leqslant  1/|p-1|
%$$

%The exponent $\beta$ is
%\[ \beta(p)=p \cdot \frac{\frac{p}{p-1}}{\Re \frac{p}{p-1}}.
%\]
%In other words, for a principal $k$-quasiconformal map we have quasiconcavity in the annulus
%$1 \leq |p-1| \leq 1/k$.
\end{theorem}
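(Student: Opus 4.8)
\textbf{Proof plan for Theorem \ref{main}.}
The plan is to deduce the weighted estimate \eqref{uus8} from the Interpolation Lemma with complex exponents (Lemma \ref{le:interpolation}) applied to a judiciously chosen non-vanishing analytic family $\Phi_\lambda$. The natural candidate is built from the holomorphic flow $f^\lambda$ of principal solutions to \eqref{eq:flow}, together with the a.e.\ pointwise structure furnished by Lemma \ref{le:simple}: on the full-measure set $E$ one has $f^\lambda_z(z)\neq 0$ depending holomorphically on $\lambda$, so that $\log f^\lambda_z$ is well defined with $\log f^0_z\equiv 0$. The first step is therefore to identify the correspondence between the parameter $p$ and the flow parameter $\lambda$: writing $\eta=\lambda k$ (so that $|\mu_\lambda|=|\eta|$ and $|\eta|\le k$), the hypothesis $1\le|p-1|\le 1/k$ should match $p-1 = \frac{1+\lambda}{\lambda}\cdot(\text{something unimodular})$ — compare Remark \ref{kuusi} and formula \eqref{left2}, which single out exactly the exponent $\beta=\bigl(1+\tfrac1\lambda\bigr)\frac{1+\lambda}{1+\re\lambda}$ at which equality is attained. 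I would fix $\lambda$ with $|\lambda|=k$ (the borderline of the ellipse; smaller $|\lambda|$ gives the strict interior of \eqref{uus7} by the same argument), let $\rho=\rho(z)$ be the unimodular function from \eqref{uus6}, and define
\begin{equation*}
\Phi_\lambda(z) \;=\; c(\lambda)\,\bigl(f^\lambda_z(z) + \rho(z)\,|\mu(z)|\,f^\lambda_z(z)\bigr)
\end{equation*}
with a normalising constant so that $\Phi_0\equiv 1$; the unit-disk parameter of the lemma is $\lambda/k$ (radius one), and $p_0=2$.

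The second step is to verify the hypotheses of Lemma \ref{le:interpolation}, and here the essential input is the (known, sharp) $\mathscr L^2$-bound coming from the Burkholder/area-type estimate: one must show $\|\Phi_\lambda\|_{2}\le 1$ with respect to the measure $\sigma$ defined by the left factor $\bigl(\bigl||f_z|+\rho|f_{\bar z}|\bigr|-|p|\,|f_{\bar z}|\bigr)$ — that is, one packages the quadratic (real-linear, $p=2$) Burkholder identity on $\DD$, exactly as in \cite{AIPS}, into the statement $\int_\DD |\Phi_\lambda|^2\,\mathrm d\sigma \le 1$. This is where the specific shape of the weight and of the inner factor $f_z+\rho|\mu|f_z$ is forced: the combination is chosen precisely so that the Burkholder-type pointwise inequality, after integrating the Jacobian $J(z,f^\lambda)$ over $\DD$ (principal solution, conformal at $\infty$), collapses to the clean $\mathscr L^2$-bound. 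Non-vanishing of $\Phi_\lambda$ on $E$ follows from Lemma \ref{le:simple}; joint measurability and analyticity in $\lambda$ follow from the holomorphic dependence of the flow.

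The third step is bookkeeping with the complex exponent. Lemma \ref{le:interpolation} then gives $\int_\Omega|\Phi_\lambda^{\,\beta/?}|\,\mathrm d\sigma\le 1$ for every $\beta$ in the ellipse $|\beta|+|\beta-2|\le 2/|\lambda/k|\cdot(\dots)$; after translating $|\lambda|=k$ and $|p-1|=|{\tfrac{1+\lambda}{\lambda}}|$ (or, on the interior, $|p-1|\ge \bigl|\tfrac{1+\lambda}{\lambda}\bigr|$ for suitable $\lambda$), this becomes exactly \eqref{uus7}, and the resulting inequality, written out, is \eqref{uus8}. The branch of the power is the natural one, $\log\Phi_0=0$, matching the statement. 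For the equality cases: $f\equiv z$ corresponds to $\mu\equiv 0$, $\rho\equiv 1$, where both sides equal $\frac1\pi|\DD|=1$; and the power maps $f(z)=\frac{z}{|z|}|z|^{(1-\eta)/(1+\eta)}$ with $|\eta|=k$ are the extremals of the underlying $\mathscr L^2$ Burkholder bound (they are precisely the radial-stretch solutions of $f_{\bar z}=\eta\frac{z}{\bar z}f_z$), so they saturate every inequality used along the way, provided $\beta$ is chosen on the boundary ellipse with $\re(\beta/p)=1$ — which is exactly the condition \eqref{left1}/\eqref{left2} for equality, re-expressed as \eqref{eq:betap}.

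The main obstacle I expect is the second step: pinning down the exact algebraic identity that converts the $p=2$ Burkholder inequality into $\|\Phi_\lambda\|_2\le 1$ with the prescribed weight, and in particular checking that the unimodular twist $\rho$ from \eqref{uus6} is the one that makes the inner factor $f_z+\rho|\mu|f_z$ interface correctly with the flow's Beltrami coefficient $\mu_\lambda=\lambda\mu/\|\mu\|_\infty$ (so that $f^\lambda$, not $f$, is what appears). A secondary technical point is justifying the manipulations pointwise a.e.\ — differentiability of the whole flow, non-vanishing of $f^\lambda_z$, and holomorphy of $\log f^\lambda_z$ in $\lambda$ — but this is precisely what Lemma \ref{le:simple} was set up to provide, so it should go through routinely.
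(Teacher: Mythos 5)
Your overall strategy — interpolation lemma, holomorphic flow, area ($\mathscr L^2$) estimate — is the right one, and your discussion of the equality cases is correct. But there is a genuine gap at exactly the point you flag as "the main obstacle," and it is not just a matter of bookkeeping: the standard flow $\mu_\lambda=\lambda\mu/\|\mu\|_\infty$ does \emph{not} work for this theorem, and the family $\Phi_\lambda(z)=c(\lambda)\bigl(f^\lambda_z+\rho|\mu|f^\lambda_z\bigr)$ you propose fails already at $\lambda=0$: since $f^0_z\equiv 1$, one has $\Phi_0(z)=c(0)\bigl(1+\rho(z)|\mu(z)|\bigr)$, which cannot be made identically $1$ by a constant $c(0)$ unless $\mu\equiv 0$. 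So the hypothesis $\Phi_0\equiv 1$ of Lemma \ref{le:interpolation} is violated, and without that the Schwarz-lemma mechanism driving the interpolation does not even start.

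The missing idea in the paper's proof is a \emph{custom}, $p$- and $\rho$-dependent flow: one sets $\mu_\lambda=\alpha_\lambda\,\overline{\rho}\,\frac{\mu}{|\mu|}$ with $\frac{\alpha_\lambda}{1+\alpha_\lambda}=p\cdot\frac{\rho|\mu|}{1+\rho|\mu|}\cdot\frac{\lambda}{1+\lambda}$, and takes $\Phi_\lambda=(1+\alpha_\lambda)f^\lambda_z$. With this choice $\alpha_0\equiv 0$, hence $\Phi_0\equiv 1$ honestly; $|\mu_\lambda|=|\alpha_\lambda|\leqslant|\lambda|$, so the flow is admissible on all of $\DD$; the original equation is recovered at $\lambda=1/(p-1)$ (not at $|\lambda|=k$, as your proposal suggests), which is exactly why the interpolation-lemma ellipse $|\beta|+|\beta-2|\leqslant 2/|\lambda|$ turns into the hypothesis $|\beta|+|\beta-2|\leqslant 2|p-1|$ of \eqref{uus7}; and, crucially, the identity $\frac{J(z,f^\lambda)}{|\Phi_\lambda|^2}=1-2\re\frac{\alpha_\lambda}{1+\alpha_\lambda}\geqslant 1-p\frac{\rho|\mu|}{1+\rho|\mu|}=:\omega(z)$ produces a \emph{$\lambda$-independent} weight $\omega$, so that the area theorem gives $\frac1\pi\int_{\DD}|\Phi_\lambda|^2\,\omega\leqslant 1$ uniformly in $\lambda$, which is the hypothesis needed for Lemma \ref{le:interpolation} with $d\sigma=\frac1\pi\omega\,dz$. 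With the standard flow, the analogous ratio $J(z,f^\lambda)/|\Phi_\lambda|^2$ depends on $\lambda$ in a way that does not collapse to a fixed weight, and there is no uniform $\mathscr L^2$ bound with the weight $\omega$. (Also a smaller bookkeeping slip: the measure in the interpolation step is $\omega\,dz/\pi$, not the full left factor of \eqref{uus8}; the latter equals $\omega\cdot|(1+\rho|\mu|)f_z|$ and emerges only after peeling one power off $|\Phi^\beta|=|\Phi|\cdot|\Phi^{\beta-1}|$ and using that $p\rho|\mu|$ and $1+\rho|\mu|$ have the same argument.)

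So to close the gap you would need to replace the standard flow with this tailored flow and redo the $\mathscr L^2$ estimate with the fixed weight $\omega$; the rest of your outline (non-vanishing via Lemma \ref{le:simple}, holomorphy in $\lambda$, the algebraic rewriting of the integrand, and the equality discussion via \eqref{eq:betap}) then goes through essentially as you describe.
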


In other words, the functional \eqref{uus8} is quasiconcave for the parameter values
$1 \leqslant |p-1| \leqslant 1/k$, within the class of principal $k$-quasiconformal deformations.

\begin{figure}
\includegraphics[width=6cm]{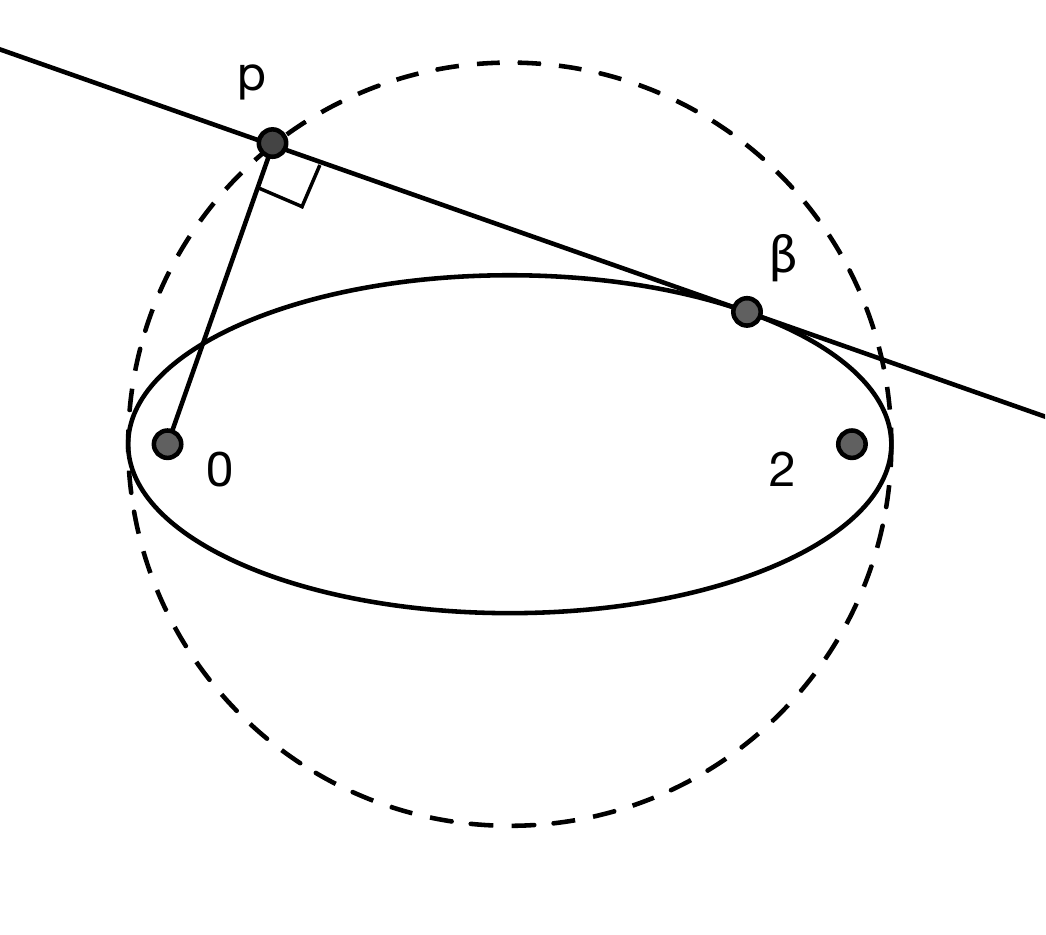}
\caption{The relation between $p$ and $\beta$ in \eqref{eq:betap}, \newline 
\centerline{$|\beta|+|\beta-2|=2|p-1|$ and $\re(\beta/p)=1$. }}
\label{fig:ellipse0}
\end{figure}

\begin{remark} Using the disk filling procedure as in \cite{AIPS}, we get many more extremals for the functional \eqref{uus8}.
\end{remark}

\begin{remark}
For $p\ge 2$  we have  $\rho \equiv 1$, while  for $p<0$, $\rho \equiv -1$. Thus for real $p$ with the choice of \eqref{eq:betap} we get back the Burkholder functionals $B_p(Df)$. On the ``phase transition'' boundary 
$|p-1|=1$, \eqref{eq:betap} forces $\beta=2$ and we recover the Jacobian.
\end{remark}

\begin{proof}[Proof of the Theorem \ref{main}] 
The proof adopts ideas from \cite{AIPS} to the case of complex exponents. 
% We may assume that $|p-1|>1$ as the case $|p-1|=1$ then follows by a limiting argument.
Given the Beltrami equation \eqref{eq:beltrami},
we define a holomorphic variation as follows. For $\lambda \in \mathbb{D}$, set 
\[ \mu_\lambda(z)=\alpha_\lambda(z) \, \overline{ \rho(z) } \cdot  \frac{\mu(z)}{|\mu(z)|}, \quad \mbox{where} \quad
\frac{\alpha_\lambda(z)}{1+\alpha_\lambda(z)} = p \cdot \frac{\rho(z) |\mu(z)|}{1+\rho(z) |\mu(z)|} \cdot \frac{\lambda}{1+\lambda}
\]
Above we use the convention ``$\, 0/0=0$'' whenever dividing by zero.

By the choice of $\rho$ in  \eqref{uus6}  we have  $\; p \cdot \frac{\rho |\mu|}{1+\rho |\mu|} \geqslant 0$. Together with the assumption $1 < |p-1| \leqslant \frac{1}{k}$, in fact $ \; p \cdot \frac{\rho |\mu|}{1+\rho |\mu|} \in [0,1]$. This can be seen, for instance, by considering the half-plane $U=\{w \colon \re w <1/2 \}$ and
observing the following inequality in terms of the hyperbolic metric in $U$,
\[ d_U(0,\frac{\rho |\mu|}{1+\rho |\mu|}) \leqslant \log \frac{1+k}{1-k} \leqslant d_U(0,\frac{1}{p}).
\]
Consequently, $|\mu_\lambda(z)|=|\alpha_\lambda(z)| \leqslant |\lambda|<1$, which makes legitimate to solve the Beltrami equations
\[ f_{\bar z}^\lambda =\mu_\lambda \, f_z^\lambda, \qquad \lambda \in \mathbb{D}
\]
 under the normalization of the principal solution.

We recover the original equation for the complex value $\lambda=\frac{1}{p-1}$ and the Cauchy-Riemann equations for $\lambda=0$.

Next,   interpolate the analytic family of functions given by
\[ \Phi_\lambda(z)= \bigl(1 + \alpha_\lambda(z)\bigr) \, f_z^\lambda(z) \neq 0.
\]
Indeed, $\Phi_0 \equiv 1$ and according to Lemma \ref{le:simple}, $\{ \Phi_\lambda\}$ is a non-vanishing family in the sense required by Lemma \ref{le:interpolation}.
%, see \cite[Lemma ]{AIPS}. 
Furthermore,  we have the comparison
\[ \frac{J(z,f^\lambda)}{|\Phi_\lambda(z)|^2} = 1-2 \, \re \, \frac{\alpha_\lambda(z)}{1+\alpha_\lambda(z)} \ge 1-p \cdot \frac{\rho(z) |\mu(z)|}{1+\rho(z) |\mu(z)|}=: \omega(z).
\]
Thus by the classical area theorem, see e.g.\cite[p. 41]{AIMb}, we have the $\mathscr L^2$-bounds
$$  \frac{1}{\pi} \int_{\mathbb{D}} |\Phi_\lambda|^2 \; \omega \leqslant 1,
$$
and we may apply Lemma \ref{le:interpolation} with the measure space $\mathscr M(\DD, \frac{1}{\pi}w\, \mathrm{d}z)$  to obtain
\begin{equation} \label{perusmuoto}
 \frac{1}{\pi} \int_{\mathbb{D}} \Bigl(1 - p \frac{\rho |\mu|}{1+\rho |\mu|}  \Bigr)
\left| \Bigl(f_{z} + \rho |\mu| f_{z} \Bigr)^{\beta} \right| \leqslant 1,
 \end{equation}
with $\beta \in \C$ as in \eqref{uus7}. Since by \eqref{uus6} the complex numbers $\; p \, \rho(z) |\mu(z)|$ and $1 + \rho(z) |\mu(z)|$ have the same argument, the integrand, in fact, takes the equivalent form of \eqref{uus8}.

Concerning sharpness, let  $ f(z) = \frac{z}{|z|} |z|^{\frac{1-\eta}{1+\eta}}$ with $|\eta| =  k$. As  $\mu_f(z) = -\frac{z}{\bar z} \eta$,  the requirement $p \frac{\eta}{1+\eta} \geqslant 0$ determines the unimodular factor  
$\rho \equiv \eta/|\eta|$, i.e. $\eta=\rho |\mu|$. %is determined by the requirement $p \frac{\rho k}{1+\rho k} \geqslant 0$. 
%Since $f_z = \frac{1}{1+\eta}  |z|^{\frac{- 2\eta}{1+\eta}}$ and 
From $\re(\beta/p)=1$ one computes that $\; \re \frac{ \beta \eta}{1+\eta} = p \frac{\eta}{1+\eta}$. Since $f_z = \frac{1}{1+\eta}  |z|^{\frac{- 2\eta}{1+\eta}}$, a direct substitution shows that  the equality holds  in \eqref{uus8}.
\end{proof}

\subsection{Higher complex integrability}\label{subse:higher}

By Stoilow factorization,  for local integrability issues it is enough to control the behaviour of principal maps. Hence, our previous theorem yields  immediate corollaries.
\begin{theorem}\label{co:complexintegrability}
Suppose $f:\C \to \C$ is a $K$-quasiconformal mapping and $B = B(z,r) \subset \C$ is a disk. Then for any exponent $\beta \in \C$ such that
\begin{equation} \label{aito3}
 |\beta| + |\beta -2| <  2\cdot \frac{K+1}{K-1},
 \end{equation}
 %and for any branch of the logarithm, % \eqref{logs44}-\eqref{logs7}
we have
%, up to  multiplicative constants depending only on $\beta$ and $K$, 
\begin{eqnarray} \label{loc10}
 \hspace{-.8cm} c_1(K,\beta)  \left| \left(\frac{f(z + r) - f(z)}{r}\right)^\beta \right|
&  \leqslant & \frac{1}{|B|}  \int_B \left| f_{z}^\beta \right| \quad \leqslant   \\
   && \hspace{.8cm} c_2(K,\beta)  \left| \left(\frac{f(z + r) - f(z)}{r}\right)^\beta \right| \nonumber 
\end{eqnarray}
where the constants $c_1, c_2$ depend only on $K$ and $\beta$.
\end{theorem}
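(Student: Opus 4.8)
The plan is to reduce Theorem~\ref{co:complexintegrability} to the weighted estimate of Theorem~\ref{main} via Stoilow factorization, following the pattern already used in the proof of Theorem~\ref{ptwise}. The main point is that both the local average $\frac{1}{|B|}\int_B |f_z^\beta|$ and the normalized difference quotient $\left|\left(\frac{f(z+r)-f(z)}{r}\right)^\beta\right|$ transform in a controlled way under composition with a conformal map, so that only the behaviour of principal solutions near their support matters.

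First I would normalize. By translating and scaling the independent variable we may assume $B = \DD$ (and, by dividing $f$ by a constant, that $f(0)=0$); the constants $c_1,c_2$ are allowed to depend on $K$, which absorbs these normalizations since everything is scale invariant in the way built into \eqref{loc10}. Next I would apply Stoilow factorization $f = F\circ g$, where $g$ is the principal solution of the Beltrami equation with coefficient $\mu\chi_{2\DD}$ (extend $f$'s Beltrami coefficient by zero outside $2\DD$) and $F$ is conformal on $g(2\DD)$. Since $F$ is conformal on the image, Lemma~\ref{riemann} together with the distortion estimate \eqref{class3} shows that on the relevant region $|\log F'|$, hence $|F'^\beta|$ and $\left|\left(\frac{F(x)-F(w)}{x-w}\right)^\beta\right|$, all oscillate by at most a multiplicative constant depending on $K$ and $\beta$. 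Thus $\int_B |f_z^\beta| = \int_B |F'(g)^\beta|\,|g_z^\beta|$ is comparable, up to $c(K,\beta)$, to $|F'(g(0))^\beta|\int_{\DD}|g_z^\beta|$, and similarly $\left|\left(\frac{f(r)-f(0)}{r}\right)^\beta\right|$ is comparable to $|F'(g(0))^\beta|\left|\left(\frac{g(r)-g(0)}{r}\right)^\beta\right|$. The factors $|F'(g(0))^\beta|$ cancel, so it suffices to prove \eqref{loc10} for the principal map $g$.

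For the principal map, the upper bound $\frac{1}{\pi}\int_{\DD}|g_z^\beta| \le c(K,\beta)$ comes directly from Theorem~\ref{main}: choose the complex parameter $p$ with $|p-1| = \frac{K+1}{K-1} = 1/k$ (the extreme allowed value, corresponding to the open condition \eqref{aito3}), note that on $\DD$ the weight $\bigl||g_z|+\rho|g_{\bar z}|\bigr| - |p|\,|g_{\bar z}|$ is bounded below by a positive constant $\omega(z) \ge c(K) > 0$ (this is exactly the quantity $\omega$ appearing in the proof of Theorem~\ref{main}, and $1 - p\frac{\rho|\mu|}{1+\rho|\mu|}$ stays in $[1-k\cdot\text{something},1]$ away from $0$ since $p\frac{\rho|\mu|}{1+\rho|\mu|}\in[0,1]$ and $|\mu|\le k<1$), and also $|g_z + \rho|\mu|g_z| = |g_z|\,|1+\rho|\mu||$ is comparable to $|g_z|$; hence \eqref{uus8} gives $\frac{1}{\pi}\int_{\DD}|g_z^{\beta}| \lesssim_K \frac{1}{\pi}\int_{\DD}\omega(z)\bigl|(g_z+\rho|\mu|g_z)^{\beta-1}\bigr|\,|1+\rho|\mu||\le c(K,\beta)$. (A small wrinkle: Theorem~\ref{main} is stated with $\beta$ replaced by $\beta-1$ inside the power and an extra weight; matching exponents just means applying it to the exponent $\beta+1$ in the range $|\beta+1|+|\beta-1|\le 2|p-1|$, which still covers the ellipse \eqref{aito3} after possibly shrinking $p$ slightly, using that \eqref{aito3} is a strict inequality.) For the remaining inequalities relating $\int_{\DD}|g_z^\beta|$ to the difference quotient, I would invoke Theorem~\ref{ptwise} (more precisely its principal-branch form \eqref{prince}) applied to $g$: it controls $\log(g(r)-g(0))$ in terms of $\log r$, and combined with the pointwise relation between $\log g_z$ and $\log(g(r)-g(0))/r$ (the geometric definition of $\log g_z$, via the analytic continuation argument of Lemma~\ref{le:agree}, or simply Koebe-type distortion for the holomorphic flow as in the proof of Theorem~\ref{ptwise}) it yields the two-sided comparison with explicit dependence only on $K$ and $\beta$.

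The main obstacle I expect is the bookkeeping of branches and the precise matching of the ellipse parameters. One must check that the branch of $z\mapsto z^\beta$ used in $\int_B|f_z^\beta|$ (determined by the analytic/geometric definition of $\log f_z$) is consistent under the factorization $f = F\circ g$ with the branches forced on $\log F'$ and $\log g_z$, so that the cancellation of $|F'(g(0))^\beta|$ is legitimate and not off by $e^{2\pi i\beta n}$; this is exactly the kind of issue handled in Remark~\ref{rem:principal} and Lemma~\ref{riemann}, but it needs care because $|e^{2\pi i \beta n}| \ne 1$ when $\beta$ is genuinely complex. Provided one fixes the principal branch of $\log g_z$ and controls the winding of the conformal factor $F$ over the compact set $g(\overline{\DD})$ (whose hyperbolic diameter in $g(2\DD)$ is bounded by $K$ alone), the integer $n$ is bounded in terms of $K$, so $|e^{2\pi i\beta n}|$ is bounded above and below by constants depending on $K$ and $\beta$ — which is all we need. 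The only other mild subtlety is confirming that the strict inequality \eqref{aito3} gives enough room to pick $p$ with $|p-1| = 1/k$ exactly while keeping $\beta$ (or $\beta\pm1$) strictly inside the closed ellipse \eqref{uus7}, which it does by a compactness/continuity argument.
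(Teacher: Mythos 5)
Your overall strategy matches the paper's: normalize (the paper uses the auxiliary map $F(w)=\frac{f(z+rw)-f(z)}{f(z+r)-f(z)}$, which turns the difference quotient into $1$ and the disk into $\DD$), apply Stoilow factorization, feed the principal factor into Theorem~\ref{main}, and handle the conformal factor with Lemma~\ref{riemann} together with the branch bookkeeping of Proposition~\ref{branch5}. However there are two substantive gaps.

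\emph{Choice of $p$.} You propose $|p-1|=1/k$ ``the extreme allowed value,'' and then claim the weight $\omega(z)=1-p\frac{\rho|\mu|}{1+\rho|\mu|}$ is bounded below by a positive constant because $p\frac{\rho|\mu|}{1+\rho|\mu|}\in[0,1]$. That implication fails: the quantity lying in $[0,1]$ does not keep it away from $1$. At a point where $|\mu(z)|=k$ and $|p-1|=1/k$, one has $p\frac{\rho k}{1+\rho k}=1$ and the weight in \eqref{uus8} vanishes, so the weighted estimate gives no information on $\int|g_z^\beta|$ there. The paper flags exactly this issue and instead picks $p=1+\frac{|\beta|+|\beta-2|}{2}$, which under the \emph{strict} inequality \eqref{aito3} satisfies $2\le p<1+1/k$; then $\rho\equiv 1$ and the weight is bounded below by $1-\frac{pk}{1+k}>0$, so \eqref{uus8} really does produce $\int_{\DD}|(\partial f_0)^\beta|\le c(K,\beta)$. (Your ``small wrinkle'' about shifting $\beta$ to $\beta+1$ is also misplaced: the factor $\bigl||f_z|+\rho|f_{\bar z}|\bigr|-|p||f_{\bar z}|$ in \eqref{uus8} is itself homogeneous of degree one in $|f_z|$, so the integrand has total degree $\beta$, and Theorem~\ref{main} applied directly with exponent $\beta$ is what you want; no shift is needed.)

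\emph{The lower bound.} You ``invoke Theorem~\ref{ptwise}'' to relate the integral to the difference quotient, but Theorem~\ref{ptwise} is a pointwise estimate on $\log(g(r)-g(0))$ and does not by itself yield a lower bound on $\int_{\DD}|g_z^\beta|$. The paper's argument here is genuinely different and worth noting: since quasisymmetry and Koebe give $\int_{\DD}|\partial f_0|\ge c(K)$, one writes $|\partial f_0|=|(\partial f_0)^{\beta/2}|\,|(\partial f_0)^{1-\beta/2}|$ and applies Cauchy--Schwarz,
\begin{equation*}
c(K)^2\le \int_{\DD}|(\partial f_0)^\beta|\cdot \int_{\DD}|(\partial f_0)^{2-\beta}|.
\end{equation*}
Because the ellipse condition \eqref{aito3} is invariant under $\beta\mapsto 2-\beta$, the upper bound applies to the second factor, yielding the required lower bound for the first. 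You would need to supply an argument of this nature (or make precise how Theorem~\ref{ptwise} is to be combined with a distortion estimate) to close the lower bound; as written that step is a gap.

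The branch discussion at the end is on the right track and essentially aligned with how the paper treats it via the normalization $\log F(1)=0$, Proposition~\ref{branch5}, and the hyperbolic-metric control of $\log\varphi'$ inside $f_0(2\DD)$.
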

\begin{remark}The branches of $\; \log f_z(w)$, for $w \in B$, and of $\left(\frac{f(z + r) - f(z)}{r}\right)^\beta$ are here chosen  as in  \eqref{logs44} - \eqref{logs7}, using the {\it same} branch for both. More precisely, consider any of the branches of   $ \log \frac{f(z)-f(w)}{z-w}\,$ discovered in Proposition \ref{branch5}. As explained in \eqref{logs7}, this determines at a.e $z \in \C$ a branch of $\log \partial f(z)$, thus also the branches in \eqref{loc10}.  
On the other hand, the result holds for any choice of the branch.
\end{remark}
\begin{proof} By a change of variables and scaling, i.e. by using the auxiliary function
$$ F(w) = \frac{f(z+rw)-f(z)}{f(z+r)-f(z)}, \qquad w \in \C,
$$
 we may assume that $B = \DD$ and that $f$ fixes  the points $0$ and $1$.  With this scaling we are also reduced to  the case where the branch of  $\log \frac{f(z)}{z}$ is determined by the condition $\log f(1) = 0$. 

Use then the Stoilow factorization,
\begin{equation} \label{stoi}
 f(z)  = \varphi \circ f_0(z), %\qquad z \in \DD,
 \end{equation}
 where $\varphi$ is conformal on $f_0(2\DD)$ and $f_0:\C \to \C$ is a principal quasiconformal mapping, with $\mu_{f_0}(z) = \mu_f(z)$ for $|z| < 2$ and  $\mu_{f_0}(z) = 0 $ for $|z| > 2$. In particular, 
 $$(f_0)_{\bar z}=\mu(z) \, (f_0)_z, \qquad |\mu(z)| \leqslant k\, \chi_{2\mathbb{D}}(z), \quad k \equiv \frac{K-1}{K+1} <1.
 $$

Considering first the inner factor, one applies Theorem \ref{main} to $f_0(2z)/2$, with complex parameter $1 \leqslant |p - 1| \leqslant \frac{1}{k} $. However, if we take $|p - 1| = \frac{1}{k}  $,Ê at points where $|\mu(z)| = k=\frac{K-1}{K+1} $, we have 
$p \, \rho k  = 1 + \rho k$ and  the integrand in  \eqref{uus8} vanishes, hence the estimate becomes useless for \eqref{loc10}.
 It is for this reason that we need to assume the strict inequality in \eqref{aito3}. 
 
Setting  $p=1+\frac{|\beta|+|\beta-2|}{2}$, we then have the strict inequalities 
$2< p < 1 + 1/k$, $k =  \frac{K-1}{K+1}$. Now $\rho(z) \equiv 1$ and \eqref{uus8}  gives
\begin{equation} \label{perusmuoto2}
 \frac{1}{\pi}\int_{B} 
\left| \Bigl(\partial f_{0}  \Bigr)^{\beta} \right| \leqslant  4 \, \frac{\max\{1,(1+k)^{1-\re \beta}\}}{1-k(p-1)} \;.
 \end{equation}
 
On the other hand,  \cite[(3.35)]{AIMb} shows for any quasisymmetric map $g:\Omega \to \Omega'$  the estimate
$$ |g(z) - g(z_0)| \leqslant \frac{c(\eta)}{r} \int_{\DD(z_0,r)} |\partial g|, \qquad   z \in \DD(z_0,r) \subset \Omega,
$$
where $\eta(t)$ is the modulus of quasisymmetry as in \eqref{qsymmetry}. 
 Further,  quasisymmetry  with Koebe distortion or
  \cite[(2.61)]{AIMb} gives  $\; \diam(f_0 B) \geqslant c_1(K)$. Therefore 
 $ \int_{B} \left| \partial f_{0}   \right|  \geqslant c(K)$.
 
Consequently, 
   $$ c(K)^2  \leqslant \left(\  \int_{B} 
\left| \Bigl(\partial f_{0}  \Bigr)^{\beta/2}  \Bigl(\partial f_{0}  \Bigr)^{1-\beta/2} \right|\right)^2
\leqslant  \int_{B} 
\left| \Bigl(\partial f_{0}  \Bigr)^{\beta}   \right|\,  \cdot \,  \int_{B} 
\left| \Bigl(\partial f_{0}  \Bigr)^{2-\beta}  \right|
  $$
  The requirement \eqref{aito3} holds for $\beta$ if and only if it does for $2-\beta$, and therefore
  \eqref{perusmuoto2} gives, too, the lower bound
  $$c(K,\beta) \leqslant   \int_{B} 
\left| \Bigl(\partial f_{0}  \Bigr)^{\beta}   \right|.
  $$

For the outer factor in the Stoilow factorization we apply Lemma  \ref{riemann}  in $\Omega = f_0(2\DD)$. This  gives 
$$ \left| \log \frac{\varphi(x)  -  \varphi(w)}{x-w} \, - \,  \log \varphi'(w) \right| \leqslant 10 \rho_\Omega(x,w), \qquad x,w \in \Omega.
$$
Choosing $x_0 = f_0(1)$, $w_0 =f_0(0)$ it follows that
$$  \left| \log \frac{\varphi(x_0)  -  \varphi(w_0)}{x_0-w_0} \right|  = \left| \log (f_0(1) -  f_0(0))\right| \leqslant (K-1) \log 8
$$
where the last estimate was shown in  \eqref{ysi}. Consequently, $\left|   \log \varphi'(w_0) \right| \leqslant C(K)$. To complete the argument note that as in the proof of Lemma \ref{riemann} the function  $|\log \varphi'(z)|$ is uniformly  Lipschitz with respect to the hyperbolic metric of $\Omega$. Since $f_0(\DD)$ has hyberbolic diameter in $\Omega$  
Êbounded in terms of $K$ only, Ê$|\log \varphi'(z)|\leqslant c(K) < \infty$ for $z \in f_0(\DD)$. With the chain rule and \eqref{stoi} we finally have 
$$ c_1(K,\beta)  \leqslant   \int_{B}  \left| \Bigl(\partial f  \Bigr)^{\beta} \right|  \leqslant c_2(K,\beta),$$
proving the claim.
\end{proof}

As for higher integrability with real exponents \cite[Section 13.4.1]{AIMb}, the theorem 
%\ref{co:complexintegrability}
 can be interpreted in terms of the Muckenhoupt   $A_p$-weights.

\begin{corollary}\label{aapee} Suppose  $1 < p < \infty$ and $f:\C \to \C$ is a $K$-quasiconformal mapping. If   the  exponent $\beta \in \C$ satisfies
both  \eqref{aito3} and the dual condition 
\begin{equation} \label{aito7}
 |\beta| + |\beta +2(p-1)| <  2\cdot \frac{K+1}{K-1}(p-1),
 \end{equation} 
then $\left| f_{z}^\beta \right| \in A_p$.
  
Moreover, for each $1 < p < \infty$, outside this range of exponents the conclusion fails for some $K$-quasiconformal mapping $f:\C \to \C$.
\end{corollary}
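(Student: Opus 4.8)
The plan is to recognize that Corollary \ref{aapee} is a routine consequence of Theorem \ref{co:complexintegrability} together with the standard characterization of Muckenhoupt $A_p$ weights via simultaneous higher integrability of a weight and its dual weight. Recall that a nonnegative weight $w$ lies in $A_p$ precisely when there is a constant $C$ with
\begin{equation*}
\left(\frac{1}{|B|}\int_B w\right)\left(\frac{1}{|B|}\int_B w^{-1/(p-1)}\right)^{p-1}\le C
\end{equation*}
uniformly over all disks $B$. Here the candidate weight is $w=\left|f_z^{\,\beta}\right|=\left|f_z\right|^{\re\beta}\,e^{-(\im\beta)\arg f_z}$, so that $w^{-1/(p-1)}=\left|f_z^{\,\beta'}\right|$ with the dual exponent $\beta'=-\beta/(p-1)$. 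The first step is therefore to check that the defining inequality \eqref{aito3} for $\beta$ and the condition \eqref{aito7} together say exactly that both $\beta$ and $\beta'$ satisfy the critical-ellipse bound \eqref{aito3}: indeed \eqref{aito7} reads $|{-}\beta/(p-1)|+|{-}\beta/(p-1)-2|<2\frac{K+1}{K-1}$ after dividing by $p-1>0$, which is precisely \eqref{aito3} applied to $\beta'$.

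Next, apply Theorem \ref{co:complexintegrability} twice, once to the exponent $\beta$ and once to $\beta'$. For each disk $B=B(z,r)$ this gives
\begin{equation*}
\frac{1}{|B|}\int_B\left|f_z^{\,\beta}\right|\le c_2(K,\beta)\left|\left(\frac{f(z+r)-f(z)}{r}\right)^{\!\beta}\right|
\end{equation*}
and, since $\frac{1}{|B|}\int_B w^{-1/(p-1)}=\frac{1}{|B|}\int_B\left|f_z^{\,\beta'}\right|$,
\begin{equation*}
\left(\frac{1}{|B|}\int_B w^{-1/(p-1)}\right)^{\!p-1}\le c_2(K,\beta')^{p-1}\left|\left(\frac{f(z+r)-f(z)}{r}\right)^{\!\beta'}\right|^{p-1}.
\end{equation*}
Multiplying the two bounds, the factors involving the difference quotient combine: $\left|(\cdot)^\beta\right|\cdot\left|(\cdot)^{\beta'}\right|^{p-1}=\left|(\cdot)^{\beta+\beta'(p-1)}\right|=\left|(\cdot)^{0}\right|=1$, because $\beta+(p-1)\beta'=\beta-\beta=0$ and the branches in \eqref{loc10} are taken consistently. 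Hence the $A_p$-product is bounded by $c_2(K,\beta)\,c_2(K,\beta')^{p-1}$, a constant independent of $B$, so $w=\left|f_z^{\,\beta}\right|\in A_p$. One should also note that if $\re\beta$ is such that $w$ or $w^{-1/(p-1)}$ fails to be locally integrable the statement is vacuous, but \eqref{aito3} and \eqref{aito7} precisely guarantee $L^1_{loc}$ for both via Theorem \ref{complexintegrability1}, so no such degeneracy occurs.

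For the sharpness claim, the plan is to invoke the extremal mappings already in hand: the power maps $f(z)=\frac{z}{|z|}|z|^{\frac{1-\eta}{1+\eta}}$ with $|\eta|=k=\frac{K-1}{K+1}$ appearing as equality cases in Theorem \ref{main}, and more generally the examples witnessing sharpness of Theorem \ref{complexintegrability1} (to be constructed in Section \ref{se:burkholder}). For such a map $f_z=c\,|z|^{-2\eta/(1+\eta)}$ near the origin, so $\left|f_z^{\,\beta}\right|$ is comparable to a radial power weight $|z|^{-2\re(\beta\eta/(1+\eta))}$; such a power weight $|z|^{-a}$ lies in $A_p(\C)$ if and only if $-2<a$ and $a<2(p-1)$, i.e. the two one-sided conditions correspond exactly to $\left|f_z^{\,\beta}\right|\in L^1_{loc}$ and $\left|f_z^{\,\beta'}\right|\in L^1_{loc}$. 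Choosing $\eta$ (equivalently, choosing which boundary component of the critical ellipse $\beta$ is pushed against) so that either \eqref{aito3} or \eqref{aito7} just fails then forces the corresponding power to be non-integrable on small disks centered at $0$, breaking the $A_p$ condition. The main obstacle is bookkeeping the branch of the complex logarithm: one must be sure that the \emph{same} branch is used when combining the two applications of Theorem \ref{co:complexintegrability}, so that the difference-quotient factors genuinely cancel; this is exactly the content of the Remark following Theorem \ref{co:complexintegrability}, and the argument above uses it. Beyond that, the proof is a short formal manipulation.
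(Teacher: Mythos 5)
Your argument is correct and is the intended one. The paper states the corollary without proof, remarking only that it follows from Theorem \ref{co:complexintegrability} via the Muckenhoupt interpretation (with a pointer to the real-exponent case in [AIMb, Section 13.4.1]), and your proof fills this in exactly: identify the dual weight $w^{-1/(p-1)}=|f_z^{\beta'}|$ with $\beta'=-\beta/(p-1)$, observe that \eqref{aito7} for $\beta$ is literally \eqref{aito3} for $\beta'$ after multiplying through by $p-1>0$, apply the upper bound in \eqref{loc10} to $\beta$ and to $\beta'$, and note that $\beta+(p-1)\beta'=0$ so the two difference-quotient factors cancel (with the same branch, as the remark after the theorem guarantees). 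That gives a uniform $A_p$ constant $c_2(K,\beta)\,c_2(K,\beta')^{p-1}$. The sharpness argument via the power maps is also the expected one.

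The only slip is a sign error in the explicit power-weight characterization at the end: for $w=|z|^{-a}$ the condition $w\in A_p(\R^2)$ is $-2(p-1)<a<2$, not ``$-2<a$ and $a<2(p-1)$'' (the latter is the condition for $|z|^{a}$). This does not affect the structure of the argument — your identification of the two one-sided constraints with $|f_z^\beta|\in L^1_{loc}$ and $|f_z^{\beta'}|\in L^1_{loc}$ is correct — but the stated numerical range is transposed.
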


As a last remark, the Stoilow factorization  works as well for mappings defined in proper subdomains of $\C$, and therefore arguing as in Theorem \ref{co:complexintegrability} we have the following local higher integrability stated as Theorem \ref{complexintegrability1} in the introduction. 

%% PREV VERSION  %%%%
%As a last remark, the Stoilow factorization  works as well for mappings defined in proper subdomains of $\C$, and therefore arguing as in Theorem \ref{co:complexintegrability} we have the local higher integrability.

\begin{theorem}
\label{co:localcomplexintegrability}
Suppose  $f$ is a $K$-quasiconformal map on a domain $\Omega\subset\C.$ Then for any exponent $\beta \in \C$ in the critical ellipse
\begin{equation} \label{aito31}
 |\beta| + |\beta -2| <  2\cdot \frac{K+1}{K-1}.
 \end{equation}
we have 
\begin{equation}\label{loc}
\left| f_{z}^\beta \right| \in L^1_{loc}(\Omega).
\end{equation}
\end{theorem}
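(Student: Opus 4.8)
The plan is to deduce Theorem~\ref{co:localcomplexintegrability} from the already-established global result, Theorem~\ref{co:complexintegrability}, via Sto\"ilow factorization, exactly as indicated in the remark preceding the statement. First I would fix a point $z_0\in\Omega$ and a disk $B=B(z_0,R)$ with $2B=B(z_0,2R)\subset\Omega$; it suffices to show $|f_z^\beta|\in L^1(B)$ for every such disk, since $\Omega$ is covered by them. After the affine change of variables $w\mapsto (w-z_0)/(2R)$ we may assume $2B=2\DD\subset\Omega$, $B=\DD$, and (composing with an affine map in the target) that $f(0)=0$, $f(1)=1$, so that the branch of $\log(f(z)/z)$ is normalized by $\log f(1)=0$, matching the branch conventions in \eqref{logs44}--\eqref{logs7}.

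Next I would write the Sto\"ilow factorization $f=\varphi\circ f_0$ on $2\DD$, where $f_0:\C\to\C$ is the principal $K$-quasiconformal map with $\mu_{f_0}=\mu_f\,\chi_{2\DD}$ and $\varphi$ is conformal on $f_0(2\DD)$. For the inner factor, $f_0$ is a principal solution of a Beltrami equation with $\|\mu\|_\infty\le k=(K-1)/(K+1)$, so Theorem~\ref{main} applies after the harmless rescaling $f_0(2z)/2$: choosing $p=1+\tfrac12(|\beta|+|\beta-2|)$, the hypothesis \eqref{aito31} gives $1\le|p-1|<1/k$ strictly, so $\rho\equiv 1$ (as $p>2$... or more carefully $p\ge 1$ with the ``phase transition'' handled as in the proof of Theorem~\ref{co:complexintegrability}), and \eqref{uus8} yields the finite bound $\int_{\DD}|(\partial f_0)^\beta|\le C(K,\beta)<\infty$; the point of the strict inequality in \eqref{aito31} is precisely that it keeps $k(p-1)<1$ so the right-hand side of the Burkholder estimate does not blow up. For the outer factor, Lemma~\ref{riemann} applied in $\Omega'=f_0(2\DD)$ shows that $|\log\varphi'(w)|$ is Lipschitz in the hyperbolic metric of $\Omega'$, and since $f_0(\DD)$ has hyperbolic diameter in $\Omega'$ bounded in terms of $K$ alone (Koebe distortion, exactly as in the proof of Theorem~\ref{co:complexintegrability}), one gets $|\log\varphi'(w)|\le c(K)$ for $w\in f_0(\DD)$. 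Then the chain rule $\partial f = (\varphi'\circ f_0)\,\partial f_0$ combined with $|(\varphi'\circ f_0)^\beta|$ being bounded above on $\DD$ gives $\int_B|f_z^\beta|=\int_\DD|(\partial f)^\beta|\lesssim_{K,\beta}\int_\DD|(\partial f_0)^\beta|<\infty$, which is the desired local integrability; undoing the initial rescaling only changes the constant.

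The main obstacle — and really the only nontrivial ingredient — is the Burkholder-type estimate for the inner factor, i.e.\ Theorem~\ref{main}, which is where the complex-interpolation machinery of Lemma~\ref{le:interpolation} does its work; but that has already been proved, so here it is used as a black box. The remaining care is purely bookkeeping: making sure the branch of $\log f_z$ produced by the factorization (via \eqref{logs7}) is the one implicitly used when we write $f_z^\beta$, checking that the rescaling $f_0(2z)/2$ does not alter the Beltrami coefficient's $L^\infty$ bound, and confirming the constants in the chain-rule step depend only on $K$ and $\beta$. Since all of these reductions were already carried out in detail in the proof of Theorem~\ref{co:complexintegrability}, the proof of Theorem~\ref{co:localcomplexintegrability} is essentially a one-paragraph localization of that argument, replacing $\C$ by the disk $2B\subset\Omega$ throughout and discarding the lower bounds, which are not needed for \eqref{loc}.
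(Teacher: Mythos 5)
Your proposal is correct and follows the same route the paper itself takes: the paper's proof of this theorem is literally the remark ``the Stoilow factorization works as well for mappings defined in proper subdomains of $\C$, and therefore arguing as in Theorem~\ref{co:complexintegrability}\dots'', and you have simply spelled out that localization step by step (fix $2B\subset\Omega$, rescale to $2\DD$, factor $f=\varphi\circ f_0$, apply Theorem~\ref{main} to $f_0(2z)/2$, control $\varphi'$ via Lemma~\ref{riemann} plus Koebe, combine by the chain rule). Your parenthetical worry about $\rho\equiv 1$ is not an issue: with $p=1+\tfrac12\bigl(|\beta|+|\beta-2|\bigr)$ one always has $p\geqslant 2$, so $\rho\equiv 1$ as in the paper's argument.
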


\noindent Sharpness of the previous result   is seen by testing with maps 
\refeq{eq:gammaalpha}. 

\begin{figure}
\includegraphics[width=9cm]{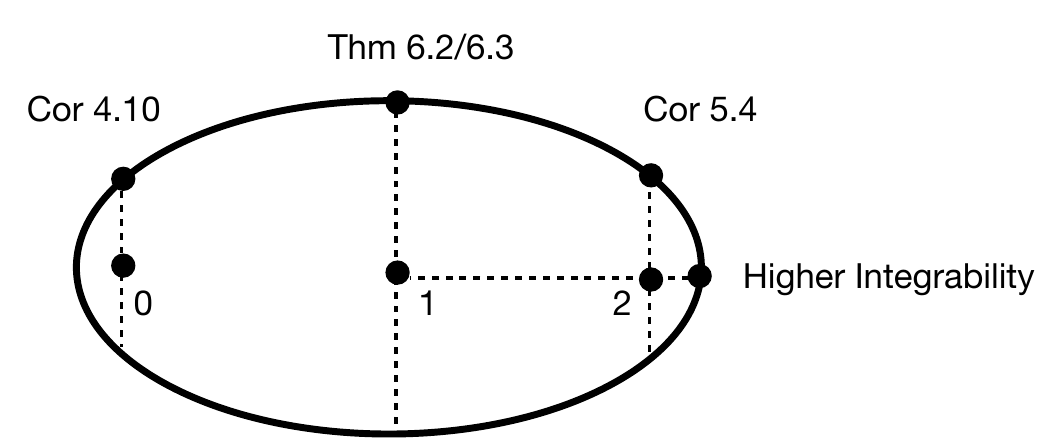}
\caption{The elliptical integrability region in Theorem \ref{co:localcomplexintegrability} and its various consequences.}
\label{fig:ellipseregion}
\end{figure}
\medskip

The previous theorem includes a number of special cases which are worth explicating, see Figure 
\ref{fig:ellipseregion}. The major axis corresponds to Higher Integrability of \cite{As}. Other boundary points on the ellipse uncover new phenomena. These will address exponential integrability of the argument for quasiconformal maps (Corollary \ref{cor:argumentintegrability}) as well for bilipschitz maps (Theorem \ref{thm:exparg}) and rotational multifractal spectrum (Corollary \ref{co:multifractal2} for quasiconformal maps and Theorem \ref{thm:bilipschitzspectrum} for bilipschitz maps). 

As a first special case with a purely imaginary exponent $\beta$ we obtain (cf.~Corollary \ref{cor:argumentintegrability-intro})

\begin{corollary}\label{cor:argumentintegrability} Suppose  $f$ is a $K$-quasiconformal map on a domain $\Omega\subset\C$. Then
$$
e^{b |\arg f_z|}\in L^1_{loc}\quad {\rm for \; all}\;\;{\rm positive }\;\;   b< \frac{4K}{K^2-1}.
$$
The result is optimal in the sense that it may fail with $b= \frac{4K}{K^2-1}$ for some $K$-quasiconformal $f$. Such an example is provided by {\rm\refeq{eq:gammaalpha}} with the choice 
$$ \tau = \frac12 \left(K+\frac{1}{K} \right) + \frac{i}{2} \left(K-\frac{1}{K} \right).
$$
\end{corollary}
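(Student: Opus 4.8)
The plan is to deduce the exponential integrability directly from Theorem \ref{co:localcomplexintegrability} by specializing to a purely imaginary exponent. Given $K$, set $k = \frac{K-1}{K+1}$, so that the major semi-axis datum is $\frac{K+1}{K-1} = \frac{1}{k}$. For a purely imaginary exponent $\beta = it$ with $t \in \R$, the ellipse condition \eqref{aito31} reads $|it| + |it - 2| < \frac{2}{k}$, i.e. $|t| + \sqrt{t^2 + 4} < \frac{2}{k}$. Solving this inequality for $|t|$: writing $s = |t|$, we need $\sqrt{s^2+4} < \frac{2}{k} - s$ (which in particular forces $s < \frac{2}{k}$), and squaring gives $s^2 + 4 < \frac{4}{k^2} - \frac{4s}{k} + s^2$, hence $s < \frac{1}{k} - \frac{k}{1} \cdot \frac{1}{1}$... more precisely $\frac{4s}{k} < \frac{4}{k^2} - 4$, i.e. $s < \frac{1}{k} - k = \frac{1-k^2}{k}$. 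Now substitute $k = \frac{K-1}{K+1}$: one computes $1 - k^2 = \frac{4K}{(K+1)^2}$ and $\frac{1-k^2}{k} = \frac{4K}{(K+1)^2} \cdot \frac{K+1}{K-1} = \frac{4K}{K^2-1}$. Thus $|f_z^{it}| \in L^1_{loc}(\Omega)$ for every real $t$ with $|t| < \frac{4K}{K^2-1}$.

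Next I would translate the integrability of $|f_z^{it}|$ into exponential integrability of $\arg f_z$. By the choice of branch discussed after \eqref{logs7}, $\log f_z$ is a well-defined measurable function a.e., and $|f_z^{it}| = |e^{it \log f_z}| = e^{-t \im \log f_z} = e^{-t \arg f_z}$. Hence $|f_z^{it}| \in L^1_{loc}$ and $|f_z^{-it}| \in L^1_{loc}$ together give $e^{|t| \, |\arg f_z|} \le e^{t \arg f_z} + e^{-t \arg f_z} \in L^1_{loc}$. Taking $|t|$ arbitrarily close to $\frac{4K}{K^2-1}$ yields $e^{b|\arg f_z|} \in L^1_{loc}$ for every positive $b < \frac{4K}{K^2-1}$, which is the assertion.

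For the sharpness statement I would exhibit the explicit map \eqref{eq:gammaalpha}, $f_\tau(z) = \frac{z}{|z|}|z|^\tau$, with $\tau = \frac12(K + \tfrac1K) + \frac{i}{2}(K - \tfrac1K)$. This $\tau$ lies on the circle \eqref{etas}, so $f_\tau$ is exactly $K$-quasiconformal. One computes $f_z = \frac{\tau}{2}\,\frac{z}{|z|}\,|z|^{\tau - 1}$ (up to the standard computation for power maps), so $\arg f_z$ differs from $\im(\tau - 1)\log|z| = \im(\tau)\log|z|$ by a bounded term, and $\im \tau = \frac12(K - \tfrac1K) = \frac{K^2-1}{2K}$. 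Near the origin $|\arg f_z| \approx \frac{K^2-1}{2K}\,|\log|z||$, so $e^{b|\arg f_z|} \approx |z|^{-\,b\,\frac{K^2-1}{2K}}$, which fails to be locally integrable in the plane precisely when $b \cdot \frac{K^2-1}{2K} \ge 2$, i.e. when $b \ge \frac{4K}{K^2-1}$. Hence the borderline value $b = \frac{4K}{K^2-1}$ indeed fails for this example.

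The only genuinely delicate point is the bookkeeping of logarithmic branches: one must make sure that the branch of $\log f_z$ entering $|f_z^{it}| = e^{-t \arg f_z}$ is the same (measurable, a.e.-defined) branch for which Theorem \ref{co:localcomplexintegrability} is stated, and that this branch agrees a.e. with the geometric $\arg f_z$ of Subsection \ref{se:complex log}; but this is exactly what the remark following Theorem \ref{co:complexintegrability} and Lemma \ref{le:agree} guarantee, so no extra work is needed. Everything else is the elementary algebra with the ellipse displayed above.
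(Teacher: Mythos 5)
Your proposal is correct and is essentially the argument the paper implicitly intends: the corollary is stated immediately after Theorem~\ref{co:localcomplexintegrability} as the ``purely imaginary $\beta$'' special case, and your algebra with the ellipse ($|t|+\sqrt{t^2+4}<2/k$ reducing to $|t|<(1-k^2)/k = 4K/(K^2-1)$), the identity $|f_z^{it}| = e^{-t\arg f_z}$, and the blow-up of the explicit power map at the endpoint are exactly the intended chain of reasoning. The only blemish is a minor computational slip in the extremal example: for $f_\tau(z)=\tfrac{z}{|z|}|z|^\tau$ one has $\partial_z f_\tau = \tfrac{\tau+1}{2}\,|z|^{\tau-1}$ (no $z/|z|$ factor), but since $\im(\tau-1)=\im\tau$ this changes nothing in the conclusion that $|\arg f_z| \sim \tfrac{K^2-1}{2K}\,|\log|z||$ near the origin.
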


\begin{remark}
We have chosen to derive Theorem \ref{co:localcomplexintegrability} from the precise weighted estimates of Theorem \ref{main}. However, to obtain the optimal exponent of integrability 
$\beta$ in Theorem \ref{co:localcomplexintegrability}  one may apply the interpolation lemma in many different ways. For instance, we could base the argument on the standard holomorphic flow given by $\mu_{\lambda}=\lambda \cdot \frac{\mu}{k}$ and the analytic family $f_z^\lambda$.
In order to obtain uniform $\mathscr{L}^2$-bounds in this setting, one needs to restrict the motion to $\{|\lambda|<1-\epsilon\}$ and use quasisymmetry.
\end{remark}

\smallskip

\section{Multifractal spectra}\label{se:multifractal}
\smallskip

The multifractal spectrum of a  Radon measure $\mu$ on $\R^n$ is (on the intuitive level) usually defined as the Hausdorff dimension of the set of points $x\in\R^n$ for which  $\mu (B(x,r))\sim r^\alpha$ for small radii. The rigorous definition has to be done carefully, and actually there are various notions of multifractal spectrum, see e.g. \cite{F} or \cite{O}. For a homeomorphism $f:\R^n\to\R^n$ the natural counterpart is the  multifractal spectrum of the induced push-forward measure $\mu = f_*(dx)$. 
In this spirit  the multifractal spectra of quasisymmetric maps of the real line was studied by the third author and Smirnov in \cite{PS}. Also closely related is Binder's work \cite{B} on the mixed integral means spectrum of conformal maps.

In this section we apply the results of the complex integrability of the gradient $f_z$ from the previous section, to
analyze the multifractal spectra  of  a $K$-quasiconformal  map $f \colon \C \to \C$. The complex integrability allows us to consider even the \emph{joint} multifractal behaviour with respect to both \emph{rotation \emph{and} stretching}.

According to the  discussion in Section \ref{infit}, fix $\alpha >0$ and $\gamma \in\R$ and  consider points 
$z\in\C$ with the following property:

\medskip

\noindent\hbox{}\qquad There is a decreasing sequence (depending on $z$) of  radii $(r_k)_{k\geqslant 1}$,\\
\noindent\hbox{}\qquad {with} $\displaystyle  r_k\rightarrow 0,$ such that \emph{simultaneously}
\begin{eqnarray}\label{eq:set}
&&\begin{split}
&\begin{cases}
\alpha=&\displaystyle \lim_{k\to\infty}\frac{\log |f(z+r_k)-f(z)|}{\log r_k}\\
&\\
\gamma= &\displaystyle  \lim_{k\to\infty}\frac{\arg (f(z+r_k)-f(z))}{\log |f(z+r_k)-f(z)|}\\
\end{cases}
\end{split}
\end{eqnarray}
%We recall that the definition of argument in the second  limit above was discussed after 
%Theorem \ref{localrotstret}, and observe again that $z$
%satisfies \refeq{eq:set}  if  the map $f$ stretches and rotates at $z$ 
%(along some sequence of scales that decrease to zero)  like the  
%model map \refeq{eq:gammaalpha} does at the origin.
\smallskip

One should  observe that  a single point can satisfy the condition \refeq{eq:set} 
for several different values of $(\alpha,\gamma ).$ In  \refeq{eq:set} the quantity $|f(z+r_k)-f(z)|$ measures  stretching in the direction of the positive real axis, but by the quasisymmetry of  $f$  any other fixed  direction gives the same result, and thus definition is  quite robust.

Introducing  the joint rotational and stretching multifractal spectrum for the class of all $K$-quasiconformal homeomorphisms, we are asking for  a characterisation of   the maximal size of the set of points  satisfying  \refeq{eq:set},
i.e. to determining the quantity
\begin{eqnarray}
\label{eq:jointspectrum}
\begin{split}
F_K(\alpha,\gamma)  := \sup \Big\{  \,&{\rm \dim_{\mathcal H} }(E)\;: \;  \refeq{eq:set}\;\; \textrm{holds for  every}\;\; z\in E, \; \;\textrm{for some} \\ \, & \mbox{$K$-quasiconformal mapping $f: \C\to\C$} \Big\}, 
\end{split}
\end{eqnarray}
where ${\rm \dim_{\mathcal H}}$ stands for the Hausdorff dimension. Note that, in view of Theorem \ref{localrotstret} there are no points satisfying \eqref{eq:set} unless %the parameters $\alpha>0$ and $\gamma \in \R$ satisfy
$\tau= \alpha({1+i \gamma}) $ lies in the closed disk
\begin{equation}
\label{eq:hypdisk}
%\alpha({1+i \gamma})\; \in \;
\Big| \; \tau\; - \; \,\frac12 \left(K+\frac{1}{K} \right) \Big| \leqslant \frac12 \Big(K-\frac{1}{K} \Big).
\end{equation}

The following theorem gives a complete description of the quasiconformal joint multifractal spectrum.
\begin{theorem}\label{th:multifractal} 
Assume that the parameters $\alpha >0$, $\gamma\in \R$ lie in the natural domain of definition of $F_K$, i.e. $\tau = \alpha({1+i \gamma})$ satisfies \eqref{eq:hypdisk}.
In this range the joint multifractal spectrum equals 
\begin{equation}\label{eq:quantity}
F_K(\alpha,\gamma) = (1+\alpha)-\frac{ \sqrt{(1-\alpha)^2(K+1)^2+ 4K \alpha^2\gamma^2}}{K-1}.
\end{equation}
\end{theorem}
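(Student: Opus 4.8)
\textbf{Proof plan for Theorem \ref{th:multifractal}.}
The strategy is to establish the upper bound via the complex higher integrability of $f_z$ (Theorem \ref{co:localcomplexintegrability}) combined with a covering/energy argument, and then to produce matching examples by means of holomorphic flows of suitable radial-type maps. For the upper bound, I would first fix $\alpha>0$, $\gamma\in\R$ in the admissible disk \eqref{eq:hypdisk} and let $E$ be a set on which \eqref{eq:set} holds for some $K$-quasiconformal $f$. Using Theorem \ref{co:complexintegrability}, which comparably controls $\frac{1}{|B|}\int_B|f_z^\beta|$ by $|((f(z+r)-f(z))/r)^\beta|$, one has for every disk $B=B(z,r)$ and every $\beta$ in the critical ellipse \eqref{aito31} the estimate
\[
\int_B |f_z^\beta| \;\sim_{K,\beta}\; r^2\,\left|\frac{f(z+r)-f(z)}{r}\right|^{\re\beta}\, e^{-\arg(f(z+r)-f(z))\,\im\beta}.
\]
Along the scales $r_k\to 0$ in \eqref{eq:set} the right-hand side behaves like $r_k^{2-\re\beta+\alpha\re\beta-\alpha\gamma\im\beta}$ (up to lower-order factors), so that choosing $\beta=\beta(\alpha,\gamma)$ to \emph{maximise} the exponent
\[
d(\beta):=2+(\alpha-1)\re\beta-\alpha\gamma\,\im\beta
\]
subject to $\beta$ lying in the closed critical ellipse, and then running a standard Vitali/Frostman-type covering argument (as in \cite{PS} or the higher-integrability proof of \cite{As}), yields $\dim_{\mathcal H}(E)\le d(\beta)$. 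The optimisation of the \emph{affine} functional $d(\beta)$ over the ellipse $|\beta|+|\beta-2|\le 2\tfrac{K+1}{K-1}$ is an elementary Lagrange computation: writing $\beta=1+\tfrac{K+1}{K-1}(\cos\theta+i\tfrac{2\sqrt K}{K+1}\sin\theta)$ (the parametrisation coming from \eqref{eq:repre} with $k=\tfrac{K-1}{K+1}$), maximising $d$ over $\theta$ produces exactly the square-root expression in \eqref{eq:quantity}. One must be slightly careful: the integrability in Theorem \ref{co:localcomplexintegrability} is only \emph{strict} inside the ellipse, so the covering argument should be run with $\beta$ slightly inside and the bound obtained by letting $\beta$ approach the optimal boundary point; the constants $c_i(K,\beta)$ in \eqref{loc10} stay bounded as long as one stays away from the degenerate vertex, which is automatic here since $\alpha>0$.

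For the lower bound (attainment), I would construct, for each admissible $(\alpha,\gamma)$ with $F_K(\alpha,\gamma)\ge 0$, a $K$-quasiconformal map $f$ and a set $E$ of dimension equal to the right-hand side of \eqref{eq:quantity} on which \eqref{eq:set} holds. The natural candidates come from embedding the model power map $f_\tau(z)=\tfrac{z}{|z|}|z|^{\tau}$, $\tau=\alpha(1+i\gamma)$ — which by Theorem \ref{ptwise}/\ref{localrotstret} realises the extremal exponent at the origin — into a holomorphic flow $f^\lambda$ and then spreading copies of the singular behaviour over a self-similar Cantor-type set via a standard "random fractal'' or Beltrami-perturbation construction, exactly the scheme used to prove sharpness of higher integrability and of the dimension distortion bound in \cite{As}, \cite{AIMb}. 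The dimension of the resulting set is governed by the pressure equation associated with the flow, and the point where the complex-integrability exponent $\beta(\alpha,\gamma)$ becomes critical on the ellipse is precisely the point where this pressure equation forces the dimension to equal \eqref{eq:quantity}; this is the same duality between the extremal $\beta$ in the Burkholder/interpolation estimate and the extremal fractal that appears throughout \cite{AIPS}. Alternatively, and more cleanly, one can deduce this half from Theorem \ref{holodim}: a holomorphic motion realising equality in \eqref{dimbound2} with the parameter $\lambda=k=\tfrac{K-1}{K+1}$ gives a $K$-quasiconformal $f=\Psi_k$ on a set of the required dimension, and a short computation shows that the bound \eqref{dimbound2} at $|\lambda|=k$ coincides with \eqref{eq:quantity} — indeed $\tfrac1{|\lambda|}\sqrt{(1-\alpha)^2+(1-|\lambda|^2)\alpha^2\gamma^2}$ with $|\lambda|=\tfrac{K-1}{K+1}$ simplifies to $\tfrac{\sqrt{(1-\alpha)^2(K+1)^2+4K\alpha^2\gamma^2}}{K-1}$.

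The main obstacle, I expect, is the upper-bound covering argument rather than the algebra: one needs the comparison \eqref{loc10} to hold \emph{uniformly in the disk} $B$ with constants independent of the location, then to convert the scale-by-scale information in \eqref{eq:set} (which holds only along a sequence $r_k$ depending on $z$, and only in the limit) into a genuine Frostman-type dimension bound. The standard device is to pass to a subset where the limits in \eqref{eq:set} are attained with uniform rate (decompose $E$ into countably many pieces on which $|\log|f(z+r_k)-f(z)|-\alpha\log r_k|\le\epsilon|\log r_k|$ and similarly for the argument, for all $k\ge N$), apply the integrability of $|f_z^\beta|$ on a fixed ball containing that piece, and use the elementary fact that if $\int_B|g|<\infty$ then $\{z: \limsup_{r\to0} r^{-d}\int_{B(z,r)}|g|=\infty\}$ has Hausdorff dimension $\le d$. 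Letting $\epsilon\to0$ and then optimising over $\beta$ finishes it. I would also record, as in the statement following the theorem, that both estimates degenerate gracefully when $\tau$ reaches the boundary circle \eqref{eq:hypdisk}, where $F_K(\alpha,\gamma)=0$ and the extremal set is a single point.
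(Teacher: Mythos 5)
Your plan follows essentially the same route as the paper: the upper bound comes from the complex higher-integrability estimates of Theorem \ref{co:complexintegrability} via a Vitali covering, and the matching lower bound from a Cantor-type iteration of the model spiral map. Two points, however, need correcting.

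The optimization direction in the upper bound is backwards. From \eqref{loc10}, for \emph{each} $\beta$ in the critical ellipse and each Vitali ball $B_n = B(z_n,r_n)$ with $z_n \in E$, you obtain the lower bound $\int_{B_n}\lvert f_z^\beta\rvert \gtrsim r_n^{d(\beta)}$ with $d(\beta) = 2 + (\alpha-1)\re\beta - \alpha\gamma\im\beta$; summing and using $\int\lvert f_z^\beta\rvert<\infty$ gives $\dim_{\mathcal H}(E) \le d(\beta)$. Every admissible $\beta$ therefore furnishes an upper bound, and the sharp conclusion is $\dim_{\mathcal H}(E) \le \inf_\beta d(\beta)$: you should \emph{minimise} the affine functional $d$ over the ellipse, not maximise it. With the parametrisation $\re\beta = 1 + \tfrac{K+1}{K-1}\cos\theta$, $\im\beta = \tfrac{2\sqrt K}{K-1}\sin\theta$, the minimum is
\[
(1+\alpha) - \frac{1}{K-1}\sqrt{(1-\alpha)^2(K+1)^2 + 4K\alpha^2\gamma^2},
\]
which is \eqref{eq:quantity}; maximising, as written twice in your plan, would flip the sign of the square root and give the useless bound $(1+\alpha)+\sqrt{\,\cdots\,}/(K-1)$.

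Your ``alternative, cleaner'' route for the lower bound — invoking the equality case of Theorem \ref{holodim} at $\lvert\lambda\rvert = \tfrac{K-1}{K+1}$ — is circular. In the paper, Theorem \ref{holodim}, including the existence of an extremal holomorphic motion, is \emph{deduced} from Theorem \ref{th:multifractal}; the extremal motion is precisely a holomorphic flow built from the extremal $K$-quasiconformal map of the Cantor-type construction, so it is not available as an ingredient here. Your primary sketch (iterating $f_\tau$ over a self-similar nest of annuli) is the right one; to make the dimension count come out, the key missing step is to choose the iteration parameter $t\in(0,1)$ as the \emph{smallest} positive number for which $t^{-1}\bigl(\alpha(1+i\gamma)-1\bigr)+1$ lies on the boundary circle \eqref{eq:hypdisk}. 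The resulting Cantor set then has dimension $2(1-t)$ in the limit of small generating radius, which matches \eqref{eq:quantity} by the cone characterisation of $F_K$ in Remark \ref{re:cone}.
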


Outside the range {\rm \eqref{eq:hypdisk}} we can set $F_K(\alpha,\gamma)=-\infty$, since then the set corresponding to $\tau = \alpha({1+i \gamma})$ is empty for any $K$-quasiconformal map. 

 \begin{remark}\label{re:cone}  It will be later  useful to observe that as a function  of the variable $\tau= \alpha({1+i \gamma})$ the function \refeq{eq:quantity} is determined  
 as the unique `cone'-like function on the closed disc
 \eqref{eq:hypdisk} with the properties:  the function takes 
the value $2$ at   $\tau=1$, vanishes  on the boundary of the disk  \eqref{eq:hypdisk} and is linear on each line segment that joins $1$ to the boundary circle. Figure \ref{fig:cone0} gives an illustration of  the graph of $F_K(\alpha,\gamma)$. % in these terms.
\end{remark}

\begin{figure}
\includegraphics[width=5cm]{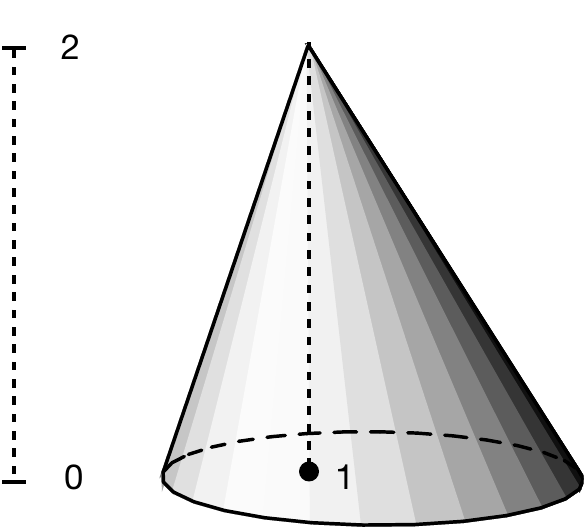}
\caption{The joint multifractal spectrum $F_K(\alpha,\gamma)$ as a function of the variable $\alpha(1+i \gamma)$.}
\label{fig:cone0}
\end{figure}

%Remark: Compare \eqref{eq:hypdisk} with Theorem \ref{ptwise} !

\noindent {\it Proof of Theorem \ref{th:multifractal}.} We begin with upper estimates for the spectrum $F_K(\alpha,\gamma)$.
Assume that $f:\C\to\C$ is $K$-quasiconformal and write for any $\alpha >0$ and $\gamma\in\R$
\begin{equation}\label{eq:Sf}
S_f(\alpha,\gamma):=\{ z\in\C\; :\; z\;\; \textrm{satisfies}\;\;\refeq{eq:set} \;\; \textrm{for some radii} \; r_k \to 0\}.
\end{equation}
Our task is to estimate  ${\rm \dim_{\mathcal H} }(S_f(\alpha,\gamma))$;
obviously, it is enough to estimate the Hausdorff dimension of   $S_f(\alpha,\gamma)\cap \DD$. 
%As we noted, Theorem \ref{localrotstret} verifies that the set $S_f(\alpha,\gamma)$ is empty
% if condition \eqref{eq:hypdisk} is not satisfied.

Next, we apply the complex integrability of the gradient $f_z$ established in the previous section. For our present purposes  the most suitable form is given by Theorem \ref{co:complexintegrability}, which in particular  states for any exponent $\beta$ in the critical ellipse  $|\beta|+|\beta-2| < \frac{2}{k}$ and for
any disk $B(z,r)\subset B(0,2)$ that 
\begin{equation}\label{eq:int}
  \left| \left(\frac{f(z + r) - f(z)}{r}\right)^\beta \right|\;
\leqslant  \; c(K,\beta) \, r^{-2}  \int_B \left| f_{z}^\beta \right|. \quad  
\end{equation}
For consistency, recall  that above the argument of both $f_z$ and of the differences $f(z + r) - f(z)$
are obtained from a fixed choice of a  branch of the function \,
$\log\big((f(w)-f(z)/(w-z)\big)$,\, defined  for  $(w,z) \in \C^2$ with $w \neq z$;  the same branch  is used for every $z\in S_f(\alpha,\gamma)$ in the definition \eqref{eq:condition} below.

Assume then that $(\alpha,\gamma)$ satisfies \refeq{eq:hypdisk} with strict inequality, as it is clearly enough to consider this case.
Fix    arbitrarily small $\varepsilon\in (0,\alpha  ).$ By definition, we may select  for any $z\in S_f(\alpha,\gamma) \cap \DD$  a radius $r_z\in (0,\varepsilon )$ so that 
\begin{equation}\label{eq:condition}
\begin{split}
\frac{\log |f(z+r_z)-f(z)|}{\log r_z}&\in
 (\alpha-\varepsilon,\alpha+\varepsilon)\quad {\rm and}\\
\frac{\arg (f(z+r_z)-f(z))}{\log |f(z+r_z)-f(z)|}&\in
 (\gamma-\varepsilon,\gamma+\varepsilon).
\end{split}
\end{equation}
Vitali's covering lemma allows us to select countably many points
$z_n$ so that the discs $B_n:=B(z_n,r_n)$ with $r_n:=r_{z_n}$ are disjoint and $S_f(\alpha,\gamma) \cap \DD \subset \bigcup_{n}5B_n.$

We fix an arbitrary  $\beta$ from the open ellipse $|\beta|+|\beta-2| < \frac{2}{k}$
and observe that (\ref{eq:condition}) together with \eqref{eq:int} yields  for any of the discs
  $B(z_n,r_n)$  
  $$
r_n^{2+(\alpha-1)\re \beta-\alpha\gamma\im \beta + O(\varepsilon)}
\; =\; r_n^{2} \left| \left(\frac{f(z_n + r_n) - f(z_n)}{r}\right)^\beta \right|\; \leqslant \;\int_{B(z_n,r_n)} \left| f_{z}^\beta \right|.
$$
Above the exponent $O(\varepsilon )$ is uniform in $n.$
We thus obtain
\begin{eqnarray}\label{eq:basic}
&&\sum_n r_n^{2+ (\alpha-1) \, \re  \beta -\alpha\gamma \,  \im  \beta  +O(\varepsilon )} \; \leqslant\; \sum_n  c(\beta)\int_{B(z_n,r_n)} |(f_z)^{\beta}|\\ & &\leqslant c(\beta)\int_{2\DD}  |(f_z)^{\beta}|\nonumber <\infty\nonumber
 .
\end{eqnarray}
Since $S_f(\alpha,\gamma)\cap \DD \subset \cup_{n}5B_n$, and $\varepsilon >0$ can be taken arbitrarily small,
it immediately follows  that  $\,2  +(\alpha-1) \re  \beta -\alpha\gamma\, \im \beta\;$ is  an upper bound for ${\rm \dim_{\mathcal H} }(S_f(\alpha,\gamma))$, for any $\beta$ in the critical ellipse.  As $f$ was an arbitrary $K$-quasiconformal map we infer that
\begin{equation}\label{eq:beta}
 F_K (\alpha,\gamma)\leqslant \inf_{\beta } \{2  +(\alpha-1) \re  \beta -\alpha\gamma\, \im \beta\},
\end{equation}
where the supremum is taken over the set $|\beta|+|\beta-2| < \frac{2}{k}$.
Using the parametrization of the ellipse  in condition \refeq{eq:repre} we get equivalently
\begin{eqnarray*}
F_K(\alpha,\gamma)&\leqslant& \min_{\theta\in [0,2\pi)} \left\{ (1+\alpha) + \frac{\alpha-1}{k} \cos \theta - \alpha\gamma \,\frac{\sqrt{1-k^2}}{k} \sin \theta \right\}\\ &=& (1+\alpha)-\frac{1}{k} \sqrt{(1-\alpha)^2+(1-k^2)\alpha^2\gamma^2}\\
&&=(1+\alpha)-\frac{ \sqrt{(1-\alpha)^2(K+1)^2+ 4K \alpha^2\gamma^2}}{K-1},
\end{eqnarray*}
as $k = \frac{K-1}{K+1}$.

In fact, as soon as we have the equality in \refeq{eq:beta}, from this representation one most directly sees the cone like property of $F_K(\alpha,\gamma)$, as discussed in Remark \ref{re:cone}.
%where the last equality is obtained by direct computation.

It remains to find  lower bounds for $F_K(\alpha,\gamma)$, and for this  we  provide examples verifying the optimality of the estimate \refeq{eq:beta}. The examples  are constructed by iterating the map \refeq{model12} in a self-similar manner inside  interlaced annuli that form a Cantor like structure. The quasiconformal map spirals only inside the annuli, elsewhere it is a similarity. The reader may compare the construction   with that in  \cite[Thm. 13.6.1]{AIMb}). 

Again, fix  $K>1$ and a pair $(\alpha,\gamma)\in (0,\infty)\times\R$ such that $\tau = \alpha(1+i\gamma)$ satisfies  (\ref{eq:hypdisk}) with a strict inequality. This initial  knowledge 
allows us to fix the auxiliary parameters $t$ and parameters 
 $(\alpha_0,\gamma_0)$ via declaring  $t$ to be  \emph{the smallest positive number such that}
\begin{equation}\label{apu8}
\begin{split}
t^{-1}\big(\alpha({1+i\gamma})-1\big)+1=:\alpha_0({1+i\gamma_0})\in \overline{B}_K,
\end{split}
\end{equation}
where 
$$\overline{B}_K\;:\; =\;\Bigg\{ \tau \in \C\; :\; \Big| \; \tau\; - \; \,\frac12 \left(K+\frac{1}{K} \right) \Big| \leqslant \frac12 \Big(K-\frac{1}{K} \Big)\Bigg\}.
$$
Clearly $t\in (0,1).$
There will be an additional  free parameter  $r\in (0,1/8)$  that we fix so small that $r<2^{-4}r^t$, and denote 
\begin{eqnarray}\label{s}
s:=r^{{\gamma}{\alpha}/{\alpha_0}{\gamma_0}}=r^t.
\end{eqnarray}

We next  construct  a $K$-quasiconformal map
$\phi$ (that depends on the parameters $\alpha_0,\gamma_0$ and $r$) by suitably iterating    the model map \refeq{eq:gammaalpha} with parameters $(\alpha_0,\gamma_0).$
  For that end we select  first inductively an infinite collection 
 of annuli, partitioned into levels  $j\geqslant 0$, in such a way that the level  $j$ annuli will have  outer radius $r^j$ and inner radius $sr^j.$  In level 0 there is only one annulus, i.e. $B(0,1)\setminus \overline{B(0,s)}.$ Assume that we have already constructed all the annuli up to level $j-1$, where $j\geqslant 1.$ For each annulus of the level $j-1$, say for $B(z_0,r^{j-1})\setminus\overline{B(z_0,sr^{j-1})}$, we may pick $N:=\big(\lfloor (s/2r)\rfloor\big)^2 =\big(\lfloor r^{(t-1)}/2\rfloor\big)^2\geqslant r^{2(t-1)}/8$  disjoint discs 
$B(z_m,r^j)$, with $m=1,2,\ldots, N$, all lying inside the punctured disc $B(z_0,sr^{j-1})\setminus\{ z_0\}$.
The corresponding level $j$ annuli are $B(z_m,r^{j})\setminus\overline{B(z_m,sr^{j})}$ for $m=1,\ldots ,N.$ By performing this operation for all annuli of the level $j-1$ we obtain the complete collection of annuli of level $j$, and their total number is $N^j$. This collection can be written as  $\{  B\setminus \overline{sB}\; :\; B\in {\mathcal A}_j\}$, where  ${\mathcal A}_j$ stands for the set  of all the outer discs of the level $j$ annuli. Obviously this construction can be made via suitable similitudes so that the intersection
\begin{equation}\label{eq:E}
E:=\bigcap_{j=1}\big(\bigcup_{B\in{\mathcal A}_j}\overline{B} \, \big)
\end{equation}
becomes a self-similar Cantor set in the plane.

Given an arbitrary annulus $B\setminus \overline{sB}$, with $B=B(w,R)$ we define the corresponding rotation map $\psi_{B}$ by setting
\vspace{-.4cm}

\begin{equation}\label{apu2}
\psi_{B}(z)=\left\{
\begin{array}{ll}
z& {\rm if}\;\; z\not\in B\\
w+ R\frac{(z-w)}{|z-w|}\left(\frac{|z-w|}{R}\right)^{\alpha_0(1+i\gamma_0)}&{\rm if}\;\; z\in B\setminus sB\\ 
{\rm continuous\;  similarity\; extension} &{\rm if}\;\; z\in sB.
\end{array}
\right.
\end{equation}
\vspace{-.25cm}

The quasiconformal mapping $\phi$  is then defined  via
an  inductive construction, see Figure \ref{fig:cantor} for illustration.
First set $\phi_0(z)=z$ and assume that $\phi_{j-1}$ is already defined. Then choose
\begin{equation}\label{apu3}
\phi_j(z)=\left\{
\begin{array}{ll}
\phi_{j-1}(z)& {\rm if}\;\; z\in \C\setminus (\bigcup_{B\in \mathcal{A}_j}B)\\
\psi_{\phi_{j-1}(B)}(\phi_{j-1}(z))&{\rm if}\;\; z\in B\;\;{\rm with }\;\; B\in {\mathcal A}_j.
\end{array}
\right.
\end{equation}
The new spiralling introduced via $\phi_j$ takes place in the set where $\phi_{j-1}$ is conformal (actually even complex linear). Hence, the fact $\alpha_0(1+i\gamma_0) \in \overline{B}_K $ implies that each $\phi_j$ is $K$-quasiconformal, and so is our final map $\phi$, where 
\begin{equation}\label{eq:phi}
\phi (z):=\lim_{j\to\infty} \phi_j(z).
\end{equation}

\begin{figure}
\includegraphics[width=4cm]{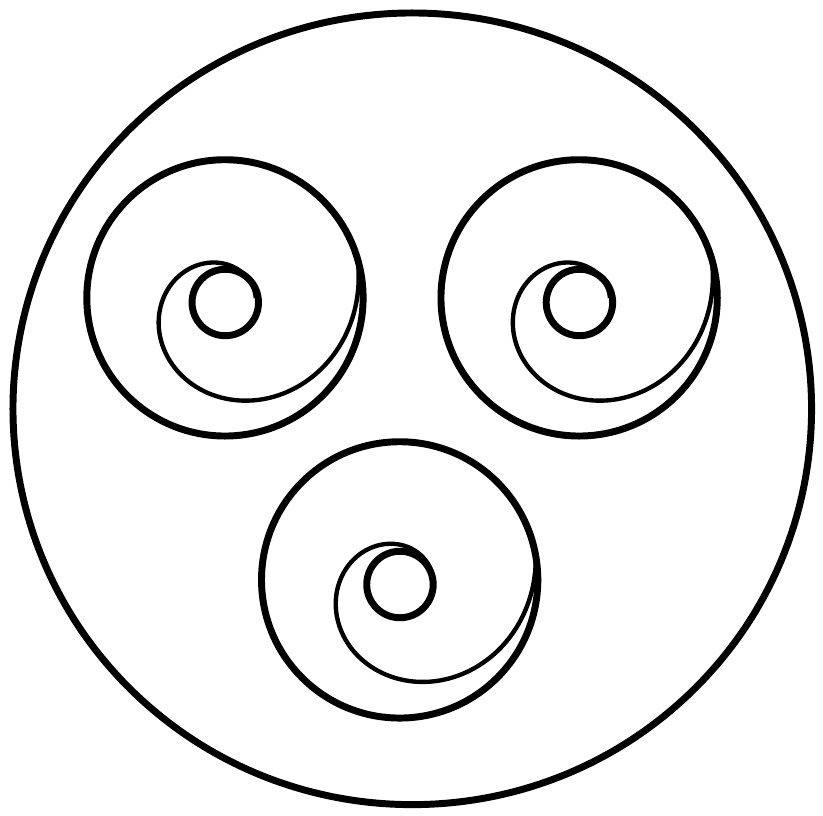}
\caption{Illustration of the mapping $\phi$ constructed in \eqref{eq:phi}}
\label{fig:cantor}
\end{figure}

Let us then consider how $\phi$ maps fixed level $j\geqslant 1$ balls. Any  ball $B\in {\mathcal A}_j$  has radius $r_j:=r^j$ and it is mapped into a ball $B'=\phi (B)$ with radius $r'_j$,
where the definition of $\phi$ together with \refeq{s} and  \refeq{apu8} yields that
\begin{equation}\label{apu4}
r'_j= { s}^{j(\alpha_0-1)}r_j=r^{jt(\alpha_0-1)}=r^{j(\alpha-1)}=
( r_j)^{\alpha}.
\end{equation}
Hence the stretching of $\phi$ has the desired order with respect to the center point of $B$. Moreover, this is also true for the rotation of $\phi$ since if $B=B(z,r^j)$ we obtain  directly by construction and using normalization \eqref{branch} that
\begin{equation}\label{apu6}
\arg  (f(z+r^j)-f(z))=  {j\gamma_0\alpha_0}\log s=\gamma \log r '_j .
\end{equation}

Every point in $E$  is inside a disc of level $j$. It is not quite the center point, but at a finite hyperbolic distance from it. Thus the above observations together with the robustness of the definition of pointwise rates of rotation and stretching (based on Theorem \ref{ptwise}), allow us to conclude that
$$E\subset S_\phi(\alpha,\gamma ).$$

On the other hand,
the Hausdorff dimension $\tau:={\rm dim}_{\mathcal H}(E)$ is computed from the
equation
$
Nr^\tau=1,
$
and by recalling that $N=\big(\lfloor r^{(t-1)}/2\rfloor\big)^2$ we obtain in the limit $r\to 0^+$
that $\tau\to 2(1-t).$ Hence
$$F_K(\alpha,\gamma)\geqslant 2 (1 -t).$$
According to definition \eqref{apu8} and the cone-type characterization observed in Remark \ref{re:cone}, this exactly means that $F_K(\alpha,\gamma)$
has the right lower bound, and  the proof of Theorem \ref{th:multifractal} is complete. \hfill $\Box$
%\end{proof}
\medskip

Combining  general properties of holomorphic motions with the above quasiconformal multifractal bounds quickly gives
\smallskip

\noindent {\it Proof of Theorem \ref{holodim}}.  Suppose $\Psi: \DD \times E \to \C$ is a holomorphic motion of a set $E \subset \C$. By Slodkowski's generalized $\lambda$-lemma \cite{Slod}  $\Psi$ extends to a motion of the whole complex plane, while the original $\lambda$-lemma of Ma{\~n}{\'e}, Sad and Sullivan \cite{MSS} proves  the extended map $\Psi_\lambda(z) = \Psi(\lambda,z)$ to be quasiconformal in $\C$. A Schwarz-lemma type argument shows that 
$$ K(\Psi_\lambda) \leqslant \frac{1+|\lambda|}{1-|\lambda|}, \qquad \lambda \in \DD, 
$$  
for details see e.g. \cite[pp. 303-304]{AIMb}. The dimension bounds \eqref{dimbound2}  hence follow  from 
Theorem \ref{th:multifractal}.
\hfill $\Box$

\medskip

We next consider  the upper and lower stretching exponents of
$f$ at point $z$ defined by 
\begin{eqnarray*}
&&\overline{\alpha}_f(z)= \limsup_{r\to 0^+} \frac{\log |f(z+r)-f(z)|}{\log r} \; , \label{yksi}  % {\rm and}\quad
\\ &&\underline {\alpha}_f(z)= \liminf_{r\to 0^+} \frac{\log |f(z+r)-f(z)|}{\log r}\, .
\end{eqnarray*}
In a similar manner, the upper and lower rates of rotation are given by
\begin{eqnarray}
&&\overline{\gamma}_f(z)=  \limsup_{r\to 0^+} \frac{\arg (f(z+r)-f(z))}{\log |f(z+r)-f(z)|},%\quad {\rm and}\quad 
\\&&\underline {\gamma}_f(z)= \liminf_{r\to 0^+}\frac{\arg (f(z+r)-f(z))}{\log |f(z+r)-f(z)|}.
\end{eqnarray}

In the following result the  novelty is the estimate \eqref{eq:co2} 
for the Hausdorff dimension of  the set  % (or its image)
 where each point has a prescribed rotation index. This estimate also shows that although our method  considers  simultaneous rotation and stretching, it is capable of producing optimal estimates for the pure rotational multifractal spectra.
\begin{corollary}\label{co:multifractal1}
Let $f:\C\to \C$ be a  $K$-quasiconformal map and $k = \frac{K-1}{K+1}$. Then  
\begin{equation}\label{eq:co1}
\dim_{\mathcal H}\Big( \{z\; :\; \overline{\alpha}_f(z)=\alpha \quad {\rm or } \quad \underline{\alpha}_f(z)=\alpha \} \Big)
  \leqslant
1+\alpha-\frac{1}{k}|1-\alpha|,    
\end{equation}
for any $\alpha \in  [K^{-1},K].$  Moreover,
\smallskip
\begin{equation}\label{eq:co2}
\dim_{\mathcal H}\Big(\{ z\; :\; \overline{\gamma}_f(z)=\gamma \quad {\rm or } \quad
\underline{\gamma}_f(z)=\gamma \} \Big)
  \leqslant
 2-\frac{k^{-1}-k}{\sqrt{1+\gamma^{-2}}-k}
\end{equation}
for any $\gamma$ with $ |\gamma|\leqslant {(K-K^{-1})}/2$.
%Outside the stated ranges for the parameter $\alpha$ or $\gamma$ these sets  are empty. 

If either $\alpha$ or $\gamma$ lies outside the given interval, then there are no points with exponent $\alpha$ or index $\gamma$, respectively.
Both estimates are optimal.
\end{corollary}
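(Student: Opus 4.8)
The plan is to extract both estimates from the joint spectrum $F_K(\alpha,\gamma)$ of Theorem~\ref{th:multifractal}; the only choice to make is which exponents $\beta$ from the critical ellipse $|\beta|+|\beta-2|<\tfrac2k$ one feeds into the covering argument. For \eqref{eq:co1} I would first note that if $\overline{\alpha}_f(z)=\alpha$ (respectively $\underline{\alpha}_f(z)=\alpha$) there are radii $r_k\to0$ realising the extreme value, i.e.\ with $\log|f(z+r_k)-f(z)|/\log r_k\to\alpha$, and then run the Vitali covering argument from the proof of Theorem~\ref{th:multifractal} essentially verbatim, but applying the integrability bound \eqref{eq:int} only for a \emph{real} exponent $\beta\in(1-\tfrac1k,\,1+\tfrac1k)$ (the real trace of the critical ellipse). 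The term $\alpha\gamma\,\im\beta$ then drops out and one is left with $\sum_n r_n^{\,2+(\alpha-1)\beta+O(\varepsilon)}\le c(\beta)\int_{2\DD}|f_z^{\beta}|<\infty$; sending $\varepsilon\to0$ and minimising $2+(\alpha-1)\beta$ over that interval (the infimum is approached at the endpoint selected by the sign of $\alpha-1$) gives exactly $1+\alpha-\tfrac1k|1-\alpha|$. That there are no points whatsoever when $\alpha\notin[K^{-1},K]$ follows from Mori's theorem applied both to $f$ and to $f^{-1}$, which also makes it legitimate to restrict $\beta$ to a bounded interval.

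For \eqref{eq:co2} I would use the full range of exponents. If $\overline{\gamma}_f(z)=\gamma$ (or $\underline{\gamma}_f(z)=\gamma$) there are radii along which the rotation ratio tends to $\gamma$, and passing to a subsequence the stretching ratio also converges; by Corollary~\ref{rem:rateofrot} its limit lies in $[K^{-1},K]$, so $z\in S_f(\alpha',\gamma)$ for some such $\alpha'$. Repeating the covering argument, now with $\beta$ anywhere in the open critical ellipse, the $O(\varepsilon)$--uniform exponents $2+(\alpha_n-1)\re\beta-\alpha_n\gamma\,\im\beta$ (with $\alpha_n\in[K^{-1}-\varepsilon,K+\varepsilon]$, by Mori) are affine in $\alpha_n$, so in the limit one gets
\[
\dim_{\mathcal H}\{z:\overline{\gamma}_f(z)=\gamma\}\ \le\ \inf_{\beta}\,\max_{\alpha\in[K^{-1},K]}\bigl(2+(\alpha-1)\re\beta-\alpha\gamma\,\im\beta\bigr)\ =\ \max_{\alpha\in[K^{-1},K]}F_K(\alpha,\gamma),
\]
the last equality being a routine minimax argument (e.g.\ by Sion's theorem: the bracket is affine in $\alpha$ on a compact interval and in $(\re\beta,\im\beta)$ on the compact convex elliptical parameter region, and its infimum over $\beta$ equals $F_K(\alpha,\gamma)$ by the proof of Theorem~\ref{th:multifractal}). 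It then remains to compute $\max_{\alpha}F_K(\alpha,\gamma)$. The cleanest route is the cone description of Remark~\ref{re:cone}: writing $F_K(\tau)=2\bigl(1-t(\tau)\bigr)$, where $t$ is the Minkowski gauge of the disk \eqref{eq:hypdisk} centred at its interior point $\tau=1$ and whose level sets are circles, maximising $F_K$ along the chord $\{\alpha(1+i\gamma):\alpha>0\}$ amounts to minimising $t$ along it, which occurs where the chord is tangent to a level circle --- a one-line distance computation yielding $2-(k^{-1}-k)/(\sqrt{1+\gamma^{-2}}-k)$ (one checks $(k^{-1}-k)B=2$ and $A-1=2k^2/(1-k^2)$, with $A=\tfrac12(K+K^{-1})$ and $B=\tfrac12(K-K^{-1})$ the centre and radius of the disk). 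For $|\gamma|>\tfrac12(K-K^{-1})$ there are no points by Theorem~\ref{localrotstret}, and for $\gamma=0$ the bound is $2$, trivially sharp.

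Optimality in both cases should come from the Cantor-type construction in the proof of Theorem~\ref{th:multifractal}: for \eqref{eq:co1} run it with $\gamma=0$ (a purely radial iteration), and for \eqref{eq:co2} with parameters $(\alpha^{*}(\gamma),\gamma)$ where $\alpha^{*}(\gamma)$ realises $\max_{\alpha}F_K(\alpha,\gamma)$ --- an \emph{interior} point of the chord whenever $0<|\gamma|<\tfrac12(K-K^{-1})$, hence admissible there; the degenerate case $|\gamma|=\tfrac12(K-K^{-1})$ has bound $0$, realised by the model map \eqref{eq:gammaalpha} at the origin. The point that needs real work is upgrading the convergence of the stretching and rotation ratios along the distinguished radii $r^{j}$ to \emph{genuine limits} $\alpha_f(z)=\alpha$ and $\gamma_f(z)=\gamma$ at every $z$ in the limit set $E$; this forces one to control the iterated map at the intermediate scales falling inside the ``core'' discs, where it is only a similarity, using that there the cumulative argument and cumulative log-distance have already been essentially fixed by the deeper levels. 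Once this is in place, $E$ lies in both sets of \eqref{eq:co1} (resp.\ \eqref{eq:co2}) and $\dim_{\mathcal H}E$ reaches the stated right-hand side in the limit of small construction parameters. I expect this upgrading to be the main obstacle, with the minimax reduction and the chord optimisation being the remaining delicate points.
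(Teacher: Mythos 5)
Your proof is correct and follows the paper's strategy at every major step; the one genuine difference is how you handle the union over the stretching exponent in the rotation bound \eqref{eq:co2}. The paper covers the admissible interval $I$ of $\alpha$'s by finitely many $\varepsilon$-intervals and uses a ``robustness'' observation to bound $\dim\bigl(\bigcup_{|\alpha-\alpha'|\le\varepsilon}S_f(\alpha,\gamma)\bigr)\le F_K(\alpha',\gamma)+c\varepsilon$ for each, then sends $\varepsilon\to0$. You instead run one Vitali covering of $E$, letting each ball carry its own (admissible) $\alpha_n$, bound the exponent in $\sum_n r_n^{2+(\alpha_n-1)\re\beta-\alpha_n\gamma\im\beta+O(\varepsilon)}$ by its maximum over $\alpha$, and then swap $\inf_\beta$ and $\max_\alpha$ via Sion's theorem. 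Both are valid; yours is arguably tighter bookkeeping since it avoids the subadditivity-of-dimension issue entirely with a single covering, and in fact Sion's theorem is not even strictly necessary once the Cantor construction supplies the matching lower bound $\max_\alpha F_K(\alpha,\gamma)$, which sandwiches $\inf_\beta\max_\alpha$ against $\max_\alpha\inf_\beta$. Two minor remarks: you write $\max_{\alpha\in[K^{-1},K]}$ where the $\alpha_n$ are really confined to the chord $I$ of the disk \eqref{eq:hypdisk}; this is harmless because $\inf_\beta(\cdots)$ is nonpositive outside $I$ while the maximizer lies in $I^\circ$, so the two maxima agree, but it is worth stating. And your concern about upgrading the convergence at scales $r^j$ to genuine limits in the Cantor construction is a real point the paper elides, but it is not a serious obstacle: for $t=r^{j+u}$, $u\in[0,1]$, the cumulative log-distance and cumulative argument up to level $j$ grow linearly in $j$ while the intermediate-scale contribution is $O(1)$, so the ratios defining $\alpha_f(z)$ and $\gamma_f(z)$ converge \emph{for all} $t\to0$, uniformly over $u$, putting $E$ inside the sets of \eqref{eq:co1}--\eqref{eq:co2}. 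Everything else (choosing real $\beta\in(1-1/k,1+1/k)$ for \eqref{eq:co1}, the endpoint minimization giving $1+\alpha-|1-\alpha|/k$, the cone/tangency computation with $A=(K+K^{-1})/2$ and $B=(K-K^{-1})/2$, emptiness from Theorem \ref{localrotstret}, and optimality from the iterated-spiral Cantor sets with $\gamma=0$ resp.\ $(\alpha^*(\gamma),\gamma)$) matches the paper.
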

\begin{proof}
The first statement \refeq{eq:co1} is  deduced by a small modification of the proof of Theorem
\ref{th:multifractal}. For each $z\in \overline{\alpha}_f(z)$ (resp. 
$z\in \underline{\alpha}_f(z)$) one chooses a radius $r_z$ that satisfies  the first condition in \eqref{eq:condition} and, in the later stage of the proof, employs only real  parameters $\beta$ in the allowed range
$\beta\in (1-1/k,1+1/k).$ In view of \refeq{eq:basic} and \refeq{eq:beta} the  Hausdorff dimension  of the left hand side set in bounded by
$$
\inf_{\beta\in (1-1/k,1+1/k)}\{ 2+(\alpha-1)\beta\},
$$
and the claim follows. That the sets in question  are empty for values $\alpha\not\in [K^{-1},K]$ follows immediately from Theorem \ref{localrotstret}, and the optimality is obtained via considering maps \refeq{eq:phi} with
$\gamma=0.$

Towards the second statement , we first fix $\gamma>0$ and write 
\begin{equation}\label{eq:E2}
E:=\{ z\;:\;\overline{\gamma}(z)=\gamma\;\; {\rm or}\;\;\underline{\gamma}(z)=\gamma\}.
\end{equation} We claim that
$E\subset \bigcup_{\alpha\in I}S_f(\alpha,\gamma),$ where  $I$ is the interval
of allowed $\alpha$, i.e. the values of $\alpha$ so that the pair $(\alpha,\gamma)$ satisfies \refeq{eq:hypdisk}.  
Namely, if e.g. $\overline{\gamma}(z)=\gamma,$ we may pick a sequence $r_k\to 0$ such that the second condition in \refeq{eq:set} is satisfied, and by moving to a further subsequence the first condition \refeq{eq:set} holds as well, with some allowed value for $\alpha .$
One observes that the proof of Theorem  \ref{th:multifractal}  is quite robust and yields immediately for any $(\alpha',\gamma)$ satisfying \refeq{eq:hypdisk} and for any  $\varepsilon >0$ the estimate
$$
\dim_{\mathcal H}\Big(\bigcup_{\alpha'-\varepsilon \leqslant \alpha\leqslant 
\alpha'+\varepsilon }S_f(\alpha,\gamma)\Big)\leqslant F_K(\alpha',\gamma)+c\varepsilon
$$
with a uniform constant $c=c(\gamma, K)$.
 By covering the interval $I$ with   finitely many intervals 
$(\alpha'-\varepsilon \leqslant \alpha\leqslant 
\alpha'+\varepsilon) $ and since $\varepsilon >0$ is arbitrary we deduce that
$$
\dim_{\mathcal H}(E)\leqslant \sup_{\alpha'\in I} F_K(\alpha',\gamma).
$$
This estimate is obviously optimal in view of the example \eqref{eq:phi}. 

In order to compute the above supremum
one may shorten computations by recalling  from Remark \ref{re:cone} that $F_K(\alpha,\gamma)= 2(1-t),$
with $\alpha(1+i\gamma)-1=t(\alpha_0(1+i\gamma_0)-1),$ where $\alpha_0(1+i\gamma_0)$ is a boundary point of the disc \refeq{eq:hypdisk}.
We parametrize the boundary by  $\alpha_0(1+i\gamma_0)
=A+a\cos \theta +ia\sin \theta$ where $A:=(K+1/K)/2$ and $a:=(K-1/K)/2$. Then
$t$ is determined from the condition
$$
\gamma =\frac{t\alpha_0\gamma_0}{1+t(\alpha_0-1)}=
\frac{ta\sin \theta}{1-t+t(A+a\cos \theta)}
$$
and we get  $t= \big(1+(a/\gamma)\sin\theta-a\cos\theta -A\big)^{-1}$.
The minimal value of this quantity is obviously
$$t_{min}:=
\frac{1}{1-A+a\sqrt{1+\gamma^{-2}}},
$$
which yields  the stated dimension bound.

The emptiness of the sets under consideration in the case where the parameters lie outside the stated ranges follows  again from Theorem \ref{ptwise}. Here one notes that the maximal slope for lines through origin that intersect the closed disc defined by condition  \refeq{eq:hypdisk} equals 	$(K-K^{-1})/2.$
\end{proof}

One may also bound the size of the image of the set with prescribed rotation rate as follows:
\begin{corollary}\label{co:multifractal2}
For any $K$-quasiconformal map $f:\C\to\C$ one has
\smallskip
\begin{equation}\label{eq:co3}
\begin{split}
%&
\dim_{\mathcal H}\Big(f\{ z\; :\; \overline{\gamma}_f(z)=\gamma \; \; {\rm or } \;\;
\underline{\gamma}_f(z)=\gamma \}\Big)
 %\\ 
\leqslant \;\; %&
2-  \frac{4K}{K^2-1}\, |\gamma| 
\end{split}\nonumber
\end{equation}
for  any $\gamma$ with $ |\gamma|\leqslant (K-K^{-1})/2$. 

For  $\gamma$ outside the interval there are no points with this rotation rate. The result is optimal.
\end{corollary}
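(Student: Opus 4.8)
The plan is to re-run the covering argument behind Theorem~\ref{th:multifractal}, but to estimate the diameters of the \emph{image} balls instead of those in the domain, and then to optimise over the stretching exponent $\alpha$, which is still free once the rotation rate $\gamma$ has been prescribed. First, by Corollary~\ref{rem:rateofrot} every limiting rate of rotation obeys $|\gamma|\le\tfrac12(K-K^{-1})$, so for $|\gamma|>\tfrac12(K-K^{-1})$ the set is empty and there is nothing to prove; assume from now on $|\gamma|\le\tfrac12(K-K^{-1})$. As in the proof of Corollary~\ref{co:multifractal1}, any $z$ with $\overline{\gamma}_f(z)=\gamma$ or $\underline{\gamma}_f(z)=\gamma$ lies in $S_f(\alpha,\gamma)$ for some $\alpha$ in the interval $I$ of values for which $\tau=\alpha(1+i\gamma)\in\overline{B}_K$, so it suffices to bound $\dim_{\mathcal H}(f(S_f(\alpha,\gamma)))$ uniformly for $\alpha\in I$. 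Fix such an $\alpha$ and a small $\varepsilon>0$. Repeating verbatim the Vitali selection from the proof of Theorem~\ref{th:multifractal} produces disjoint disks $B_n=B(z_n,r_n)$ with $z_n\in S_f(\alpha,\gamma)\cap\DD$, $r_n<\varepsilon$, satisfying \eqref{eq:condition}, and $S_f(\alpha,\gamma)\cap\DD\subset\bigcup_n 5B_n$. Since $f$ is quasisymmetric with data depending only on $K$, we have $\operatorname{diam} f(5B_n)\le C(K)\,|f(z_n+r_n)-f(z_n)|\le C(K)\,r_n^{\alpha-\varepsilon}$. Feeding this into a Hausdorff content computation together with the integrability bound \eqref{eq:basic} (available for every $\beta$ in the critical ellipse $|\beta|+|\beta-2|<2/k$) and letting $\varepsilon\to0$ gives
\[
\dim_{\mathcal H}(f(S_f(\alpha,\gamma)))\ \le\ \frac1\alpha\,\inf_{\beta}\Bigl(2+(\alpha-1)\operatorname{Re}\beta-\alpha\gamma\operatorname{Im}\beta\Bigr)\ =\ \frac{F_K(\alpha,\gamma)}{\alpha}.
\]
Covering $I$ by finitely many short subintervals, exactly as in Corollary~\ref{co:multifractal1} in order to control the uncountable union over $\alpha$, we obtain
\[
\dim_{\mathcal H}\Bigl(f\{z:\overline{\gamma}_f(z)=\gamma\ \text{or}\ \underline{\gamma}_f(z)=\gamma\}\Bigr)\ \le\ \sup_{\alpha\in I}\frac{F_K(\alpha,\gamma)}{\alpha}.
\]

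It then remains to evaluate this supremum. Writing $v=1/\alpha$ and using the explicit formula \eqref{eq:quantity}, a short computation reveals that
\[
\frac{F_K(\alpha,\gamma)}{\alpha}\ =\ 1+v-\frac{\sqrt{(v-1)^2(K+1)^2+4K\gamma^2}}{K-1},
\]
which is a concave function of $v>0$; its derivative vanishes at $v^{*}=1+\tfrac{K-1}{K+1}|\gamma|$, and evaluating there --- using the simplification $\sqrt{(K-1)^2+4K}=K+1$ --- yields the value $2-\tfrac{4K}{K^2-1}|\gamma|$. A direct check shows that the corresponding $\alpha^{*}=1/v^{*}$ lies in $I$, i.e. that $\alpha^{*}(1+i\gamma)\in\overline{B}_K$, and this membership written out reduces precisely to $|\gamma|\le\tfrac12(K-K^{-1})$. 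Hence the maximum is attained and $\sup_{\alpha\in I}F_K(\alpha,\gamma)/\alpha=2-\tfrac{4K}{K^2-1}|\gamma|$, which is the asserted upper bound.

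For the matching lower bound I would run the self-similar Cantor construction $\phi$ of Theorem~\ref{th:multifractal} with the pair $(\alpha^{*},\gamma)$ just identified; for $|\gamma|<\tfrac12(K-K^{-1})$ this pair lies strictly inside $\overline{B}_K$, so the construction is legitimate. The resulting Cantor set $E$ satisfies $E\subset S_\phi(\alpha^{*},\gamma)$, and one verifies --- as in the optimality part of Corollary~\ref{co:multifractal1} --- that $\overline{\gamma}_\phi(z)=\gamma$ for every $z\in E$. By \eqref{apu4} each level-$j$ disk of radius $r^{j}$ is sent by $\phi$ onto a disk of radius $r^{\,j\alpha^{*}}$, and these image disks inherit the same self-similar structure with branching number $N$; therefore $\dim_{\mathcal H}\phi(E)=\dim_{\mathcal H}(E)/\alpha^{*}=F_K(\alpha^{*},\gamma)/\alpha^{*}$ in the limit $r\to0^{+}$, i.e. $2-\tfrac{4K}{K^2-1}|\gamma|$. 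The remaining borderline value $|\gamma|=\tfrac12(K-K^{-1})$, where the claimed bound is $0$, is realised trivially by the model map \eqref{eq:gammaalpha} at the tangent point of $\partial\overline{B}_K$; and outside the range $|\gamma|\le\tfrac12(K-K^{-1})$ the set itself is empty by Corollary~\ref{rem:rateofrot}.

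The only genuinely new point beyond Theorem~\ref{th:multifractal} is the transition from the domain sum \eqref{eq:basic} to an image Hausdorff content estimate with the correct power of $r_n$: one must combine $\operatorname{diam} f(5B_n)\lesssim r_n^{\alpha-\varepsilon}$ with \eqref{eq:basic} and track the $O(\varepsilon)$ errors in the exponents so that the $\varepsilon\to0$ limit returns exactly $F_K(\alpha,\gamma)/\alpha$. Everything else is routine: the robustness of the dimension bound over short parameter intervals (already used for Corollary~\ref{co:multifractal1}) and the elementary optimisation of $F_K(\alpha,\gamma)/\alpha$. I expect this last optimisation --- in particular checking that $\alpha^{*}$ stays admissible in exactly the stated range of $\gamma$ --- rather than any analytic difficulty, to be the step most worth doing carefully.
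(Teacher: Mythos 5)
Your proposal is correct, but it takes a somewhat different route from the paper's proof, and the comparison is worth making explicit. You fix the stretching exponent $\alpha$, run the Vitali/Hausdorff-content argument to get $\dim_{\mathcal H}\bigl(f(S_f(\alpha,\gamma))\bigr)\le F_K(\alpha,\gamma)/\alpha$ for each $\alpha$, then cover the admissible interval $I$ by short subintervals (as in Corollary~\ref{co:multifractal1}) and finally carry out an explicit concavity optimisation of $v\mapsto F_K(1/v,\gamma)\,v$. That works, and your computation of the maximiser $v^\ast=1+k|\gamma|$ (equivalently $\alpha^\ast=1/(1+k|\gamma|)$) and the resulting value $2-\tfrac{4K}{K^2-1}|\gamma|$ is accurate. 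The paper, by contrast, avoids the $\alpha$-decomposition and the one-variable optimisation altogether: it restricts to the boundary family $\beta=2+ib$ with $|b|<k^{-1}-k$, assigns to each Vitali ball $B_n=B(z_n,r_n)$ its own exponent $\alpha_n$ defined exactly by $r_n^{\alpha_n}=|f(z_n+r_n)-f(z_n)|=:r_n'$, and notes that with $\operatorname{Re}\beta=2$ the domain-side exponent $2+(\alpha_n-1)\operatorname{Re}\beta-\alpha_n\gamma_n\operatorname{Im}\beta=\alpha_n(2-\gamma_n b)$, so $r_n^{\text{exponent}}=(r_n')^{2-\gamma_n b}$ and the $\alpha_n$'s drop out automatically. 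This collapses \eqref{eq:basic} into a direct image-side content bound $\sum_n(r_n')^{2-\gamma b+O(\varepsilon)}<\infty$, with no need to cover $I$ or optimise in $\alpha$. Both arguments produce the same infimum $2-|\gamma|(k^{-1}-k)$; yours makes the optimal $\alpha^\ast$ visible (which you then correctly reuse for the extremal Cantor construction, matching the paper's $\alpha(\gamma)=1/(1+|\gamma|k)$), while the paper's choice of $\operatorname{Re}\beta=2$ is more economical because the per-ball scaling $r_n\mapsto r_n'$ absorbs the stretching exactly. Minor caveat: your covering step over $I$ needs the image-side analogue of the robustness estimate used in Corollary~\ref{co:multifractal1}; this is routine (the quasisymmetric bound $\operatorname{diam}f(5B_n)\lesssim r_n^{\alpha-\varepsilon}$ is uniform over a short $\alpha$-interval) but should be stated rather than only implied.
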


\begin{proof} We again modify the proof of Theorem \ref{th:multifractal}.
Choose for each point $z\in E
$  (where $E$ was defined in \refeq{eq:E2}) a radius $r_z$ so that the second condition in \refeq{eq:condition} holds with a fixed $\varepsilon>0$, and
pick the disjoint balls $B_n=B(z_n,r_n)$ as before. Define $\alpha_n$ through
$r_n^{\alpha_n}=|f(z_n+r_n)-f(z)|=:r'_n.$ In \refeq{eq:basic} and \refeq{eq:beta} apply  only the values $\beta =2+ib$ with 
$|\beta-2|+|\beta |<2/k$, or equivalently $|b|<k^{-1}-k$. In this situation \refeq{eq:basic} can be written
in terms of the radii $r'_n$  in the form $\sum_n {r'}_n^{2-\gamma b+O(\varepsilon)}\leqslant C$. By quasisymmetry,  the balls $cB(f(z_n),r'_n)$ cover
the image $f(E)$, where $c$ depends only on $K.$ Hence the analogue of 
\eqref{eq:beta} takes the form
$$
\dim_{\mathcal H}(f(E))\leqslant \inf_{|b|<(k^{-1}-k)}\big (2-\gamma b\big)=2-|\gamma | (k^{-1}-k),
$$
which yields the stated estimate.

Optimality is verified by considering the map $\phi$ from \refeq{eq:phi} with the stretching and rotation indices $(\alpha (\gamma ),\gamma),$ with
$|\gamma|< (K-K^{-1})/2$, and where the judiciously chosen value of $\alpha$
equals 
$$
\alpha (\gamma ):=\frac{1}{1+|\gamma|k}.
$$
One also notes that $\dim (\phi (E))=\dim (E)/\alpha$ as one easily computes by observing that $\phi (E)$ is likewise self-similar.
It is of interest to observe that, independently of the value of $\gamma>0$, the half-line determined by the points $1$ and $\alpha(\gamma )(1+i\gamma)$ intersects $\partial \overline{B}_K$ at the point 
 $\frac{1-k^2}{1+k^2}+\frac{i2k}{1+k^2}$.
\end{proof}

\begin{remark}{\rm
One could of course also ask for bounds of the Hausdorff dimension of the image
in the contexts of  Theorem \ref{th:multifractal} or of the first part of Corollary \ref{co:multifractal1}. In these cases   the optimal bound is obtained by multiplying the already obtained results by $\alpha^{-1}$, since  one may  cover the image by  balls $B(z_j,cr_j^{\alpha-\varepsilon})$,
where the disks $B(z_j,r_j)$ are as in the respective proofs.  }
\end{remark}

It is also reasonable to study   Minkowski type multifractal spectra by  considering dimension estimates  of the type
 \[ D_{f,M}(\alpha,\gamma) := \lim_{\varepsilon \to 0} \limsup_{{r} \to 0} \frac{\log N({r},\alpha,\gamma,\varepsilon)}{|\log {r}|},
\]
where $N({r},\alpha,\gamma,\varepsilon)$ is a maximum number of disjoint disks $B_n=B(z_n,{r})$
with center $z_n \in \mathbb{D}$, such that the stretching and  rotation of $f$ on $B_n$ satisfy \refeq{eq:condition} (with $r_z$ replaced by $r$). The  proof of Theorem  \ref{th:multifractal} applies with mere cosmetic changes and yields 
\begin{corollary}\label{co:multifractal3}
Let $f:\C\to \C$ be a  $K$-quasiconformal map. Then  

$$
D_{f,M}(\alpha,\gamma)\leqslant 1+\alpha -\frac{ \sqrt{(1-\alpha)^2(K+1)^2+ 4K \alpha^2\gamma^2}}{K-1},
$$
\vspace{-.3cm}

\noindent whenever $\tau =  \alpha({1+i \gamma}) $ lies in the disk \eqref{eq:hypdisk}. The estimate is the best possible. 
\end{corollary}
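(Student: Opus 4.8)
The plan is to repeat the proof of Theorem~\ref{th:multifractal} essentially word for word, with the Vitali cover replaced by the disjoint disks that define $N(r,\alpha,\gamma,\varepsilon)$; as there, it suffices to treat the case where $\tau=\alpha(1+i\gamma)$ satisfies \eqref{eq:hypdisk} with strict inequality. \emph{Upper bound.} Fix $\varepsilon\in(0,\alpha)$ and an exponent $\beta$ in the open critical ellipse $|\beta|+|\beta-2|<2\cdot\frac{K+1}{K-1}$. For a small $r>0$ let $B_n=B(z_n,r)$, $n=1,\dots,N:=N(r,\alpha,\gamma,\varepsilon)$, be disjoint disks centered in $\DD$ on which the stretching and rotation of $f$ satisfy \eqref{eq:condition} with $r_z$ replaced by $r$; for $r$ small all $B_n\subset 2\DD$. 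Computing $\bigl|\bigl((f(z_n+r)-f(z_n))/r\bigr)^{\beta}\bigr|$ from \eqref{eq:condition} and inserting it into \eqref{eq:int} exactly as in the derivation of \eqref{eq:basic} gives
\[
r^{\,2+(\alpha-1)\re\beta-\alpha\gamma\im\beta+O(\varepsilon)}\ \le\ c(K,\beta)\int_{B_n}\bigl|f_z^{\beta}\bigr|,
\]
with the $O(\varepsilon)$ term uniform in $n$. Summing over the disjoint disks and using $\int_{2\DD}|f_z^{\beta}|<\infty$ (Theorem~\ref{co:localcomplexintegrability}) yields $N\le C(K,\beta)\,r^{-(2+(\alpha-1)\re\beta-\alpha\gamma\im\beta+O(\varepsilon))}$. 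Dividing $\log N$ by $|\log r|$, letting $r\to0$ and then $\varepsilon\to0$ gives $D_{f,M}(\alpha,\gamma)\le 2+(\alpha-1)\re\beta-\alpha\gamma\im\beta$ for every admissible $\beta$; minimizing over $\beta$ through the parametrization \eqref{eq:repre} of the ellipse — the same computation as in Theorem~\ref{th:multifractal} — produces the bound $1+\alpha-\sqrt{(1-\alpha)^2(K+1)^2+4K\alpha^2\gamma^2}/(K-1)$.

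\emph{Optimality.} I would reuse the self-similar Cantor construction $\phi$ from \eqref{eq:phi}, built from the model map \eqref{eq:gammaalpha} with parameters $(\alpha_0,\gamma_0)$ fixed from $(\alpha,\gamma)$ as in \eqref{apu8}. At level $j$ the family $\mathcal{A}_j$ consists of $N^j$ disjoint disks of radius $r^j$, and by \eqref{apu4}--\eqref{apu6} — together with the robustness of the pointwise rates — the centers of these disks satisfy \eqref{eq:condition} for any fixed $\varepsilon>0$ once $j$ is large. Hence $N(r^j,\alpha,\gamma,\varepsilon)\ge N^j$, so along the scales $\rho=r^j$ one has $\log N(\rho,\alpha,\gamma,\varepsilon)/|\log\rho|\ge \log N/|\log r|=\dim_{\mathcal H}(E)$, whence $D_{\phi,M}(\alpha,\gamma)\ge\dim_{\mathcal H}(E)$. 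Letting the construction parameter $r\to0^+$ makes $\dim_{\mathcal H}(E)\to 1+\alpha-\sqrt{(1-\alpha)^2(K+1)^2+4K\alpha^2\gamma^2}/(K-1)$, so the estimate is best possible. The emptiness statement outside the range \eqref{eq:hypdisk} follows, as in the previous corollaries, from Theorem~\ref{localrotstret}.

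\emph{Main obstacle.} There is no genuine difficulty; the only care needed is (i) the uniformity of the error $O(\varepsilon)$ over the disjoint disks and correct bookkeeping of the two nested limits $r\to0$ and $\varepsilon\to0$ in the order dictated by the definition of $D_{f,M}$, together with the harmless requirement that all disks lie in the fixed compact set $2\DD$ where the integrability estimate \eqref{eq:int} applies; and (ii) in the optimality part, choosing the subsequence of scales $\rho=r^j$ along which the $\limsup$ detects the Cantor structure. Both are routine.
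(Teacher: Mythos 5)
Your proposal is correct and takes exactly the approach the paper intends: the paper disposes of this corollary in one line by saying the proof of Theorem~\ref{th:multifractal} applies with mere cosmetic changes, and your write-up spells out precisely what those changes are — replacing the Vitali cover by the disjoint disks counted by $N(r,\alpha,\gamma,\varepsilon)$, obtaining the bound $N\lesssim r^{-(2+(\alpha-1)\re\beta-\alpha\gamma\im\beta+O(\varepsilon))}$ from the packing estimate \eqref{eq:int} and the integrability of $|f_z^\beta|$, and reusing the level-$j$ disks of the self-similar construction \eqref{eq:phi} for the lower bound.
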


\begin{remark}\label{re:domainmultifractal}{\rm By standard localization and Stoilow factorisation,  the results in this section generalize for $K$-quasiconformal maps between arbitrary domains.
}
\end{remark}

\begin{remark}\label{re:imageside}{\rm Obviously Theorem \ref{th:multifractal} can be equally well understood as a statement on the combined ordinary multifractal spectrum of the pull-back measure  $\mu:=f^*(dx)$ and the rotation spectrum.
}
\end{remark}

\section{Bilipschitz maps and rotation}\label{se:bi-Lipschitz}

In this section, we apply our quasiconformal methods to bilipschitz maps. These have, by the very definition, trivial stretching but may exhibit non-trivial rotation. On the other hand, extremal quasiconformal mappings for various ``stretching problems'' often exhibit no rotation at all. This  leads to a number of dual analogies, as described in  Table \ref{table:qcvsbilip}  in the introduction. Below we uncover  the various entries from this dictionary.

%\bigskip

As discussed in Section \ref{bilipqc}, orientation preserving $L$-bilipschitz mappings  $f \colon \Omega \to \mathbb{C}$ are $L^2$-quasiconformal (and if $f$ is not orientation preserving, then its conjugate $\overline{f}$ is). 
%
%is called $L$-bilipschitz if it satisfies
%\[ \frac{1}{L} |z-w| \leqslant |f(z)-f(w)| \leqslant L |z-w|, \]
%whenever $z,w \in \Omega$. For the purposes of the rest of the paper we may require without loss of generality $f$ %to be orientation-preserving, then an $L$-bilipschitz mapping becomes $L^2$-qua\-si\-conformal (if $f$ is not %orientation preserving, then its conjugate $\overline{f}$ is).
An archetypical example of a bilipschitz map with spiralling behaviour is the logarithmic spiral map
\begin{equation} 
\label{eq:spiralmap}
s_\gamma(z)=z|z|^{i \gamma}, \quad \gamma \in \mathbb{R},
\end{equation}
that we already discussed before.
Indeed, it is not hard to verify that $s_\gamma \colon \mathbb{D} \to \mathbb{D}$ is $L$-bilipschitz where $L \ge 1$ satisfies
\[ L-\frac{1}{L}=|\gamma|.
\]
In the following, we study rotational behaviour of general planar bilipschitz mappings in various points of views.
All of our results will be consequences of the quasiconformal theory. %however we stress that 
Nevertheless, in this way we obtain sharp results even in the bilipschitz category. One way to explain this phenomenon is  that often the maps  extremal in the bilipschitz category,  such as the logarithmic spiral map (and its iterated variants), are not only bilipschitz but also area-preserving; thus there is an exact correspondence between the optimal quasiconformal and bilipschitz constants of the form $K=L^2$.

\subsection{John's problem} 
\label{subsection:John}
Denote by $A=A(r,R)=\{z \in \C : r<|z|<R \}$ the annulus with radii $0<r<R<\infty$. Let $f \colon A \to A$ be an $L$-bilipschitz map  identity  $f(z)=z$ on the outer boundary $|z|=R$, with  a prescribed rotation (parametrized by $\gamma \in \R$) on the inner boundary, 
\[ f(z)=z e^{i \gamma \log(R/r)}, \quad |z|=r.
\]
John's problem asks for constraints on the bilipschitz constant $L$ for such a map to exist. Note that the analogous problem of finding quasiconformal maps between annuli (of different conformal modulus) is attributed to Gr\"otzsch.
In his work \cite{Joh} F. John  obtained some quantitative results in the asymptotic regime $L \to 1$, while the complete solution was given by  Gutlyanski{\u\i} and Martio \cite{GM}: the rotation parameter $\gamma$ and the bilipschitz constant $L$ need to satisfy
\begin{equation}
\label{eq:gm}
|\gamma| \leqslant L-\frac{1}{L}.
\end{equation}
As discussed above, the spiral maps such as in  \eqref{eq:spiralmap} provide extremal examples.
For related recent results in the class of mappings of finite distortion, see \cite{BFP}. 

We begin by  generalizing 
\eqref{eq:gm} to all  $L$-bilipschitz maps between arbitrary domains.

\subsection{Pointwise rotation}  
Recall the upper and lower rates of rotation from Section \ref{se:multifractal}, definitions  \eqref{yksi}. For bilipschitz mappings it is equivalent to use  the following convenient formulations 

\begin{eqnarray*}
&&\overline{\gamma}_f(z)=  \limsup_{r\to 0^+} \, \frac{\arg (f(z+r)-f(z))}{ \log r},
\\
&&\\
&&\underline {\gamma}_f(z)= \liminf_{r\to 0^+} \, \frac{\arg (f(z+r)-f(z))}{\log r }.\\
%&&\overline{\gamma}_f(z)=  \limsup_{r\to 0^+} \frac{\arg (f(z+r)-f(z))}{\log |f(z+r)-f(z)|},%\quad {\rm and}\quad 
%\\&&\underline {\gamma}_f(z)= \liminf_{r\to 0^+}\frac{\arg (f(z+r)-f(z))}{\log |f(z+r)-f(z)|}.
\end{eqnarray*}
\begin{proposition}
\label{thm:pointwise}
Let $f \colon \Omega \to \Omega'$ be an $L$-bilipschitz homeomorphism between planar domains. 
%$\Omega$ and $\Omega'$. 
Then the  rates of rotation satisfy the following pointwise bound
\[ |\overline{\gamma}_f(z)|,|\underline{\gamma}_f(z)| \leqslant L-\frac{1}{L}, \quad z\in \Omega.
\]
The spiral map \eqref{eq:spiralmap} shows that  this is best possible in general.
\end{proposition}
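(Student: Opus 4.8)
The plan is to read the inequality off the infinitesimal estimate of Theorem~\ref{localrotstret}, the point being that an $L$-bilipschitz map has infinitesimal stretching exponent identically equal to $1$, and that the chord $\{\re\tau=1\}$ of the hyperbolic disk into which Theorem~\ref{localrotstret} confines $\tau$ has half-length exactly $L-\tfrac1L$.

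First I would dispose of the orientation: if $f$ reverses orientation, its conjugate $\overline f$ is an orientation-preserving $L$-bilipschitz homeomorphism with $\overline\gamma_{\overline f}=-\underline\gamma_f$ and $\underline\gamma_{\overline f}=-\overline\gamma_f$ (a continuous branch of the argument of a conjugate is minus the original branch, and an additive integer multiple of $2\pi$ is killed after division by $\log r\to-\infty$), so the bound for $f$ follows from the one for $\overline f$. Assume henceforth $f$ is orientation preserving; then it is $L^2$-quasiconformal by Section~\ref{bilipqc}. Fix $z_0\in\Omega$. Since $\tfrac1L\le|f(z_0+r)-f(z_0)|/r\le L$ for all small $r>0$, we have $\log|f(z_0+r)-f(z_0)|=\log r+O(1)$, so $\lim_{r\to0^+}\frac{\log|f(z_0+r)-f(z_0)|}{\log r}=1$; moreover dividing $\arg(f(z_0+r)-f(z_0))$ by $\log r$ or by $\log|f(z_0+r)-f(z_0)|$ gives the same $\limsup$ and $\liminf$ (if the argument stays bounded both are $0$, and otherwise the bounded discrepancy is negligible against $\log r\to-\infty$), which is the asserted equivalence of the two formulations of the rates of rotation.

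Next I would choose radii $r_k\to0^+$ with $\frac{\arg(f(z_0+r_k)-f(z_0))}{\log r_k}\to\overline\gamma_f(z_0)$. Along this sequence the limit \eqref{alffa} exists and equals $\alpha=1$, while the limit \eqref{gamma} exists and equals $\overline\gamma_f(z_0)$ (branch-independence being Remark~\ref{rem:explicate}). Hence Theorem~\ref{localrotstret}, applied with $K=L^2$, forces $\tau=\alpha(1+i\overline\gamma_f(z_0))=1+i\,\overline\gamma_f(z_0)$ into the disk $\bigl|\tau-\tfrac12(L^2+L^{-2})\bigr|\le\tfrac12(L^2-L^{-2})$. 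With $A=\tfrac12(L^2+L^{-2})$ and $a=\tfrac12(L^2-L^{-2})$ this reads $(1-A)^2+\overline\gamma_f(z_0)^2\le a^2$, so
\[
\overline\gamma_f(z_0)^2\le a^2-(A-1)^2=(1-L^{-2})(L^2-1)=\bigl(L-\tfrac1L\bigr)^2,
\]
giving $|\overline\gamma_f(z_0)|\le L-\tfrac1L$; repeating with a sequence realizing $\underline\gamma_f(z_0)$ yields the same bound for the lower rate. Sharpness is immediate: $s_\gamma(z)=z|z|^{i\gamma}$ is $L$-bilipschitz on $\DD$ with $L-\tfrac1L=|\gamma|$, and $s_\gamma(r)-s_\gamma(0)=r\,e^{i\gamma\log r}$, so $\overline\gamma_{s_\gamma}(0)=\underline\gamma_{s_\gamma}(0)=\gamma$ attains the bound.

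The only step deserving emphasis rather than routine verification is why Theorem~\ref{localrotstret} already produces the \emph{sharp} bilipschitz constant $L-\tfrac1L$ and not the weaker $\tfrac12(L^2-L^{-2})$ of Corollary~\ref{rem:rateofrot}: it is precisely the bilipschitz constraint $\alpha=1$ that pins $\tau$ to the vertical chord $\{\re\tau=1\}$ of the hyperbolic disk $\overline{B}_{L^2}$, on which the above elementary computation returns exactly $L-\tfrac1L$. The remaining points — branch-independence of \eqref{gamma} and the equivalence of the two normalizations of the rate of rotation — are minor.
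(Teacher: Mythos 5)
Your proof is correct and follows essentially the same route as the paper: reduce to the orientation-preserving case by conjugation, note that the bilipschitz condition forces the stretching exponent $\alpha=1$ along any sequence of radii, and then read the bound off the disk in Theorem~\ref{localrotstret} with $K=L^2$. You supply more of the routine details (the orientation-reversal bookkeeping, the equivalence of the two normalizations of the rate of rotation, and the explicit algebraic simplification $a^2-(A-1)^2=(L-\tfrac1L)^2$), but the underlying argument is the paper's.
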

\begin{proof} After taking the conjugate if necessary, the $L$-bilipschitz map becomes $L^2$-quasiconformal.  We 
%consider rotation at the origin and 
observe that the bilipschitz property of $f$ ensures that the limit \eqref{alffa} is equal to one at every point regardless of the subsequence of radii chosen. Hence an application  of Theorem \ref{localrotstret} yields for (any) rate of rotation $\gamma$ the inequality
$$
 \left| i\gamma \, - \,\frac12 \left(K+\frac{1}{K} -2\right)  \right| \leqslant \frac12 \left(K-\frac{1}{K} \right).
$$
As  $K\leqslant L^2$, a simplification yields the stated bound.
\end{proof}

The slightly weaker estimate  $|\gamma| \leqslant \sqrt{L^2-1}$ was derived by Freedman and He \cite{FH} in connection with  the bilipschitz factoring problem for  the map $s_\gamma$. We will see in Section \ref{subsection:factoring} how the optimal form of Theorem \ref{thm:pointwise} leads to an optimal answer of the factoring problem, as well.  

Theorem \ref{thm:pointwise} is sharp as a pointwise estimate, but one expects that extremal spiralling behaviour cannot simultaneously occur at many places. This is the theme we discuss next. 

\subsection{BMO and Exponential integrability} On a historical side, %let us note that
 the function space of bounded mean oscillation was originally introduced by John and Nirenberg  \cite{Joh,JN} exactly in the context of rotational phenomena of bilipschitz maps, i.e. John's problem above. Roughly speaking, John established a discrete variant of the fact, c.f.  Proposition \ref{prop:hamilton}, that
\[ \arg f_z \in BMO.\]
This coupled with the John-Nirenberg lemma leads then to bounds on the rotation problem considered in section \ref{subsection:John}. 

In view of the previous discussion it is natural to ask for the best possible exponential integrability of the argument $\arg f_z$. As an application of Theorem \ref{co:localcomplexintegrability} we obtain immediately

\begin{theorem}
\label{thm:exparg}
Let $f \colon \Omega \to \C$ be an $L$-bilipschitz mapping. Then for any $0 \leqslant b < \frac{2L}{L^2-1}$ we have
\begin{equation} \label{luku6} \exp(b |\arg f_z|) \in L^1_{{\rm loc}}(\Omega).
\end{equation}
Furthermore, the integrability  fails  at the borderline $b= \frac{2L}{L^2-1}$ for some $L$-bilipschitz mapping $f$.
\end{theorem}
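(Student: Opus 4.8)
The plan is to read off both halves from the quasiconformal machinery already developed, exploiting the one extra feature of the bilipschitz class: $\log|f_z|$ is bounded. Passing to $\bar f$ if necessary we may assume $f$ is sense-preserving, hence $L^2$-quasiconformal on $\Omega$; moreover, since $f$ and $f^{-1}$ are both $L$-Lipschitz, at a.e.\ point of differentiability one has $L^{-1}\le |f_z|\le L$. Consequently, for any $\beta\in\C$ and a.e.\ $z$, $|f_z^{\,\beta}| = |f_z|^{\re\beta}\exp\!\bigl(-(\im\beta)\arg f_z\bigr)$ with $|f_z|^{\re\beta}\in[L^{-|\re\beta|},L^{|\re\beta|}]$, and in particular $\exp(\mp b\arg f_z)\le L\,|f_z^{\,1\pm ib}|$ for every real $b$. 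Here the branch of $\arg f_z$ (Proposition~\ref{branch5} and \eqref{logs7}) enters only through such powers and, being defined modulo $2\pi$, alters the quantities below by locally bounded factors, so that $L^1_{\mathrm{loc}}$-membership is branch-independent.

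For the positive statement, fix $0\le b<\tfrac{2L}{L^2-1}$ and apply Theorem~\ref{co:localcomplexintegrability} with $K=L^2$ to the two exponents $\beta=1\pm ib$. Since $\beta-2=-1\pm ib$, we get $|\beta|+|\beta-2| = 2\sqrt{1+b^2}$, which is strictly less than $\tfrac{2(L^2+1)}{L^2-1}=\tfrac{2(K+1)}{K-1}$ precisely when $b<\tfrac{2L}{L^2-1}$; hence $|f_z^{\,1\pm ib}|\in L^1_{\mathrm{loc}}(\Omega)$. As $\exp(b|\arg f_z|)\le \exp(b\arg f_z)+\exp(-b\arg f_z)\le L\bigl(|f_z^{\,1-ib}|+|f_z^{\,1+ib}|\bigr)$, this yields \eqref{luku6}. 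The only genuine idea is to choose $\re\beta=1$, i.e.\ the \emph{widest} chord of the critical ellipse, which produces the sharp constant $\tfrac{2L}{L^2-1}$ rather than the weaker $\tfrac{4L^2}{L^4-1}$ that a direct appeal to Corollary~\ref{cor:argumentintegrability} with $K=L^2$ would give.

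For sharpness at $b_0:=\tfrac{2L}{L^2-1}$, I would test with the logarithmic spiral $s_\gamma(z)=z|z|^{i\gamma}$ on $\DD$ with $\gamma:=L-\tfrac1L$, which is $L$-bilipschitz. A direct computation gives $\partial_z s_\gamma = \bigl(1+\tfrac{i\gamma}{2}\bigr)e^{i\gamma\log|z|}$, so $|\partial_z s_\gamma|$ is constant and $\arg\partial_z s_\gamma = \arg\bigl(1+\tfrac{i\gamma}{2}\bigr)+\gamma\log|z|$; hence for $|z|$ small, $\exp\bigl(b_0|\arg\partial_z s_\gamma|\bigr)\asymp |z|^{-b_0\gamma}$. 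Since $b_0\gamma = \tfrac{2L}{L^2-1}\cdot\tfrac{L^2-1}{L}=2$, this is $\asymp|z|^{-2}$ near the origin, which is not locally integrable; this is the borderline failure. I do not expect a real obstacle: the whole argument is elementary ellipse arithmetic together with one explicit example, and the subtlest bookkeeping is merely making the branch and orientation reductions precise enough that ``$\arg f_z$'' in \eqref{luku6} is unambiguous.
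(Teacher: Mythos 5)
Your proof is correct and follows essentially the same route as the paper: apply Theorem~\ref{co:localcomplexintegrability} with $K=L^2$, observe that bilipschitz boundedness of $|f_z|$ frees you to choose $\re\beta$ so as to maximize $|\im\beta|$ inside the critical ellipse (the semi-minor axis at $\re\beta=1$, giving $\frac{2L}{L^2-1}$), and test sharpness against the logarithmic spiral $s_\gamma$ with $|\gamma|=L-1/L$. The paper's write-up is terser but rests on precisely the same observations; your explicit bound $\exp(b|\arg f_z|)\le L\bigl(|f_z^{1-ib}|+|f_z^{1+ib}|\bigr)$ and the verification $b_0\gamma=2$ for the failure at the borderline are both accurate.
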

\begin{proof}  %To estimate the exponential integrability of $\arg f_z$,
 Since  $|f_z|$ is uniformly bounded above and below,   only the imaginary part of $\beta$ plays now a role in  Theorem \ref{co:localcomplexintegrability}. But for $\beta$'s
 %On the other hand, the supremum of the  imaginary parts of the 
    %we may use all values of $\beta$
    inside  the critical ellipse $|\beta| +|\beta-2| < 2\frac{L^2+1}{L^2-1}$,  
 %The  integrability \eqref{luku6} follows by  observing that
  the supremum of the  imaginary part  equals $\frac{2L}{L^2-1}$ (and infimum equals $-\frac{2L}{L^2-1}$).
The optimality is seen by considering the model map \refeq{eq:spiralmap} with $L-1/L=|\gamma |.$
\end{proof}

\subsection{Multifractal spectrum} Interpreting
Theorem \ref{thm:exparg} more geometrically we use it  for  multifractal bounds on rotation.
Indeed, by specializing Theorem \ref{th:multifractal} to the bilipschitz case, that is, by setting $\alpha=1$ and $K=L^2$,  we obtain the following characterisation %description (cf.~Theorem \ref{1.1}).

\begin{theorem}
\label{thm:bilipschitzspectrum}
Let $f \colon \Omega \to \Omega'$ be an $L$-bilipschitz mapping, where  $\Omega$ and $\Omega'$ are  planar domains. Then we have the optimal bounds
$$%\hspace{-.2cm}\dim_H \big(\{ z \in \Omega : \overline{\gamma}_f(z)=\gamma \}\cup
%\{ z \in \Omega : \underline{\gamma}_f(z)=\gamma \}\big)
\hspace{-.2cm}\dim_H \big(\, \{ z \in \Omega : \; \overline{\gamma}_f(z)=\gamma   \;\; \, {\rm or } \;\;\,
  \underline{\gamma}_f(z)=\gamma \}\, \big) \;   \leqslant \;  \; 2-\frac{2L}{L^2-1} |\gamma|
$$
for every admissible $ |\gamma| \leqslant L-\frac{1}{L}$. 
\end{theorem}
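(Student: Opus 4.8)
The plan is to deduce Theorem~\ref{thm:bilipschitzspectrum} directly from the quasiconformal joint multifractal bound in Theorem~\ref{th:multifractal} by exploiting the fact that an $L$-bilipschitz map has trivial stretching, so that the only free parameter is the rotation rate $\gamma$, and that it is $L^2$-quasiconformal.

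First I would reduce to the orientation-preserving case: if $f$ reverses orientation, replace $f$ by $\overline{f}$, which has the same rotation sets (up to sign of $\gamma$, which is immaterial since the bound only involves $|\gamma|$) and is again $L$-bilipschitz. Next, by the standard localization and Stoilow-factorization remarks (Remark~\ref{re:domainmultifractal}), it suffices to treat $f\colon\C\to\C$, so Theorem~\ref{th:multifractal} and Corollary~\ref{co:multifractal1} apply. The key observation, already used in the proof of Proposition~\ref{thm:pointwise}, is that for an $L$-bilipschitz map the quantity $\log|f(z+r)-f(z)|/\log r$ tends to $1$ as $r\to0$ uniformly (indeed $\frac1L\le |f(z+r)-f(z)|/r\le L$), so in the definition~\eqref{eq:set} one is forced to take $\alpha=1$; moreover the reformulated rates $\overline{\gamma}_f,\underline{\gamma}_f$ with $\log r$ in the denominator agree with those in~\eqref{gamma} having $\log|f(z+r)-f(z)|$ in the denominator, since the two logarithms differ by a bounded amount. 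Hence the set $\{z:\overline{\gamma}_f(z)=\gamma\text{ or }\underline{\gamma}_f(z)=\gamma\}$ is contained, after passing to suitable subsequences of radii, in $S_f(1,\gamma)$ (in the notation of~\eqref{eq:Sf}), or more precisely is handled by the robust version of the argument in Corollary~\ref{co:multifractal1} with the stretching exponent pinned to $\alpha=1$.

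Then I would simply substitute $\alpha=1$ and $K=L^2$ into the spectrum formula~\eqref{eq:quantity}:
\[
F_K(1,\gamma) = 2 - \frac{\sqrt{0 + 4K\gamma^2}}{K-1} = 2 - \frac{2\sqrt{K}\,|\gamma|}{K-1} = 2 - \frac{2L}{L^2-1}\,|\gamma|,
\]
using $\sqrt{K}=L$. This gives the upper bound. For the admissible range, note that with $\alpha=1$ the point $\tau=\alpha(1+i\gamma)=1+i\gamma$ lies in the disk~\eqref{eq:hypdisk} precisely when $|i\gamma|\le\frac12(K-1/K)$, i.e. $|\gamma|\le\frac12(L^2-L^{-2})$; but one checks this is implied by (and for the sharpness construction one only needs) $|\gamma|\le L-1/L$, which is the bilipschitz constraint from~\eqref{eq:gm}. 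For optimality, I would invoke the lower-bound construction of Theorem~\ref{th:multifractal}: the self-similar Cantor iteration~\eqref{eq:phi} built from the model map~\eqref{model12} with parameters $(\alpha_0,\gamma_0)$ on the circle~\eqref{etas}; when one runs this with target $\alpha=1$, the building-block spiral maps $z\mapsto z|z|^{i\gamma_0}$ are area-preserving and $L$-bilipschitz, and the iteration preserves the bilipschitz property with the same constant $L$ (this is exactly the area-preserving correspondence $K=L^2$ remarked upon at the start of Section~\ref{se:bi-Lipschitz}), so the extremal set realizing equality in~\eqref{eq:quantity} at $\alpha=1$ is produced by an honest $L$-bilipschitz map.

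The main obstacle I anticipate is bookkeeping rather than substance: one must verify carefully that fixing $\alpha=1$ in the upper-bound half of the proof of Theorem~\ref{th:multifractal} is legitimate — that is, that the Vitali-covering argument and the complex-integrability estimate~\eqref{eq:int} still go through when the radii $r_z$ are chosen only to control the rotation~\eqref{eq:condition} and the stretching exponent is known a priori to be $1+O(\varepsilon)$ — and, on the construction side, that the iterated map $\phi$ of~\eqref{eq:phi} is genuinely $L$-bilipschitz (not merely $L^2$-quasiconformal) when each generation uses the area-preserving spiral; this requires checking that composing/gluing area-preserving $L$-bilipschitz pieces supported on disjoint annuli keeps the global bilipschitz constant equal to $L$, using that the map is a similarity outside the spiralling annuli. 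Both points are addressed by the discussion already in Sections~\ref{se:multifractal} and~\ref{se:bi-Lipschitz}, so the proof is short once these are invoked.
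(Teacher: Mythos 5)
Your proposal follows exactly the paper's route: specialize Theorem~\ref{th:multifractal} with $\alpha=1$ and $K=L^2$ for the upper bound, and invoke the Cantor-type construction of Theorem~\ref{th:multifractal} at $\alpha=1$ (where the building blocks degenerate to area-preserving pure spirals, so the whole iteration is $L$-bilipschitz) for sharpness. One small arithmetic slip worth fixing: the disk condition~\eqref{eq:hypdisk} at $\tau=1+i\gamma$ is $\bigl|1+i\gamma-\tfrac12(K+1/K)\bigr|\leqslant\tfrac12(K-1/K)$, which squares out to $\gamma^2\leqslant\tfrac14(K-1/K)^2-\tfrac14(K+1/K-2)^2=(K-1)^2/K$, i.e.\ $|\gamma|\leqslant\sqrt{K}-1/\sqrt{K}=L-1/L$ when $K=L^2$; thus the quasiconformal admissibility range at $\alpha=1$ is \emph{identical} to the bilipschitz constraint~\eqref{eq:gm}, not merely implied by it, which is a nicer consistency than your proposal records.
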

\smallskip

Theorem \ref{1.1} is an immediate corollary.
The optimality of the bounds,  i.e. that they hold as an equality for some $L$-bilipschitz mapping,  follows from the construction in Theorem \ref{th:multifractal} with $\alpha=1$.
Figure \ref{fig:spectra} above contrasts Corollary \ref{co:multifractal1} and \eqref{eq:co1} with Theorem \ref{thm:bilipschitzspectrum}. Observe also  that for bilipschitz maps the dimension of any set is preserved, whence the above estimate is valid also for the image of the set of prescribed rate of rotation. 

\begin{figure}
\includegraphics[width=12cm]{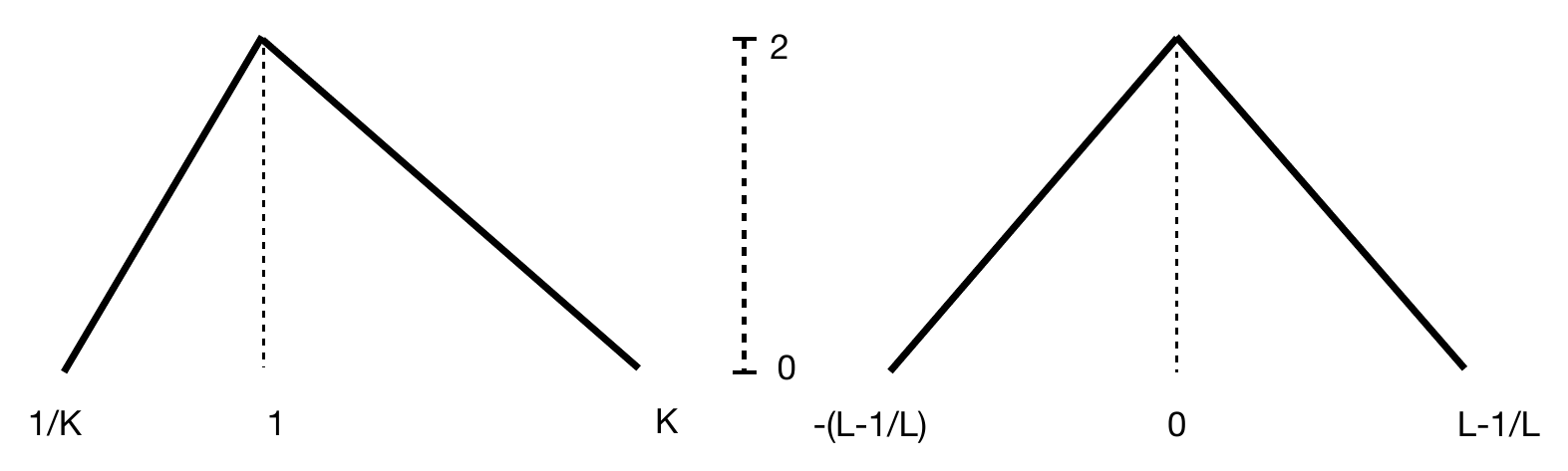}
\caption{Multifractal spectra: quasiconformal stretching (left) and bilipschitz rotation (right).}
\label{fig:spectra}
\end{figure}

\subsection{Factoring the logarithmic spiral} 
\label{subsection:factoring} A basic open question in the study of  bilipschitz mappings (in $\R^n$)
 is whether such a map can be represented as a composition of $(1+\varepsilon)$-bilipschitz mappings, for any $\varepsilon >0$. The factoring is known only in dimension $n = 1$;  see \cite{FM} for  recent general results on this theme.
 Towards this problem Freedman and He \cite{FH} studied the factoring of  the logarithmic spiral map \eqref{eq:spiralmap}. 
As an application of our pointwise rotation estimates we will revisit their question. 

One way to factor $s_\gamma$ to maps of small bilipschitz distortion is to simply write it as a composition of slower spirals:
\[ s_\gamma = \underbrace{s_{\gamma_0} \circ s_{\gamma_0} \circ \ldots  \circ s_{\gamma_0}}_{\mbox{ \tiny $N$ terms}},
\]
where $\gamma_0 = \gamma /N$. The next Theorem says that this is the most efficient way.

\begin{theorem}
\label{thm:factoring}
Let $s_\gamma \colon \bar{\mathbb{D}} \to \bar{\mathbb{D}}$ be factored as
$s_\gamma = f_N \circ f_{N-1} \circ \ldots f_1$, where each $f_i$ is an $L_0$-bilipschitz map of a closed Jordan domain in $\R^2$, $L_0 >1$. 
Then the  number of factors needed is at least
$ N \ge \left \lceil \frac{|\gamma|}{ L_0-\frac{1}{L_0}} \right \rceil .$
\end{theorem}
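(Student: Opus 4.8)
The plan is to argue by contradiction using the pointwise rotation bound (Proposition \ref{thm:pointwise}) together with subadditivity of rotation rates under composition. Suppose $s_\gamma = f_N \circ f_{N-1} \circ \cdots \circ f_1$, where each $f_i$ is $L_0$-bilipschitz. The key observation is that at the origin the spiral map $s_\gamma$ realizes its extremal rotation rate: $\overline{\gamma}_{s_\gamma}(0) = \underline{\gamma}_{s_\gamma}(0) = \gamma$. I would like to conclude that the total rotation rate of the composition at $0$ cannot exceed the sum of the rotation rates of the factors at the relevant points, each of which is bounded by $L_0 - \frac{1}{L_0}$ in absolute value by Proposition \ref{thm:pointwise}. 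This immediately gives $|\gamma| \le N (L_0 - \frac1{L_0})$, hence $N \ge |\gamma| / (L_0 - \frac1{L_0})$, and since $N$ is an integer we get $N \ge \lceil |\gamma| / (L_0 - \frac1{L_0}) \rceil$.

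The main step to make precise is the subadditivity of rotation rates under composition. Write $z_0 = 0$, and let $z_j = (f_j \circ \cdots \circ f_1)(0)$ for $j = 1, \dots, N-1$, with $g_j := f_j \circ \cdots \circ f_1$. Since each $f_i$ is bilipschitz, along any sequence $r_k \to 0$ the quantity $|g_j(z_0 + r_k) - g_j(z_0)|$ is comparable to $r_k$, so $\log|g_j(z_0+r_k) - g_j(z_0)| = \log r_k + O(1)$, and the rotation rate may be computed with $\log r_k$ in the denominator (as in the bilipschitz formulation preceding Proposition \ref{thm:pointwise}). Now for the argument, using a continuous branch one has
\[
\arg\bigl(g_j(z_0 + r) - g_j(z_0)\bigr) = \arg\bigl(f_j(w_j + \rho_j(r)) - f_j(w_j)\bigr) + \arg\bigl(g_{j-1}(z_0+r) - g_{j-1}(z_0)\bigr) + O(1),
\]
where $w_j = z_{j-1}$ and $\rho_j(r) = g_{j-1}(z_0 + r) - g_{j-1}(z_0) \to 0$ with $|\rho_j(r)| \asymp r$. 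Dividing by $\log r$ and taking $\limsup$ as $r \to 0$ along the extremal sequence for $s_\gamma$, the bounded terms wash out and we obtain, after telescoping over $j = 1, \dots, N$,
\[
|\gamma| = |\overline{\gamma}_{s_\gamma}(0)| \le \sum_{j=1}^{N} \Bigl(L_0 - \frac{1}{L_0}\Bigr) = N\Bigl(L_0 - \frac{1}{L_0}\Bigr),
\]
where each summand is controlled by applying Proposition \ref{thm:pointwise} to $f_j$ at the point $w_j$ (noting that as $r$ ranges over the chosen sequence, $|\rho_j(r)|$ still ranges over a sequence tending to $0$, so the bound on $\overline\gamma_{f_j}(w_j)$ applies).

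The step I expect to be the main obstacle is the careful bookkeeping of the logarithmic branches in the telescoping identity, in particular verifying that the error terms arising from the comparisons $\log|\rho_j(r)| = \log r + O(1)$ and from the choice of branches are genuinely bounded (independent of $r$) and hence negligible after division by $\log r$. One must also check that the $\limsup$ defining $\overline{\gamma}_{f_j}(w_j)$ is taken over a sequence along which the increments of $g_{j-1}$ actually tend to zero, which follows from the lower bilipschitz bound; a subsequence extraction argument, passing to a common subsequence on which all the relevant limits exist, makes the telescoping rigorous. Once the subadditivity inequality is established, the rest is the trivial integrality rounding.
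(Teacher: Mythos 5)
Your approach is exactly the paper's: combine the subadditivity of the rotation rate under composition (which the paper attributes to Freedman--He and does not re-prove) with the sharp pointwise bound of Proposition~\ref{thm:pointwise}, then round up. One caution on your displayed telescoping identity: since $g_j(z_0+r)-g_j(z_0)=f_j(w_j+\rho_j(r))-f_j(w_j)$ \emph{identically}, the left-hand side already equals the first term on the right, so as written the formula forces $\arg\bigl(g_{j-1}(z_0+r)-g_{j-1}(z_0)\bigr)=O(1)$, which is false when $g_{j-1}$ spirals. The correct telescoping is $\arg\bigl(g_j(z_0+r)-g_j(z_0)\bigr)=\bigl[\arg\bigl(f_j(w_j+\rho_j(r))-f_j(w_j)\bigr)-\arg\rho_j(r)\bigr]+\arg\bigl(g_{j-1}(z_0+r)-g_{j-1}(z_0)\bigr)$, where the bracketed ``excess argument'' of $f_j$ relative to the incoming direction $\rho_j(r)$, divided by $\log r\asymp\log|\rho_j(r)|$, is what converges (along subsequences) to a rotation rate of $f_j$ at $w_j$ and is controlled by Proposition~\ref{thm:pointwise}; the direction-independence of this rate, which you need here, follows from Theorem~\ref{ptwise}/Theorem~\ref{localrotstret}. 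With that correction the argument goes through and matches the paper's.
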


Freedman and He proved the same result with lower bound $|\gamma| /\sqrt{L_0^2-1}$, and the improvement above was observed in \cite{GM} for factoring within a special class of bilipschitz maps.

\begin{proof} 
We follow \cite{FH}, where the only adjustment needed is the sharp form of Theorem \ref{thm:pointwise}. For simplicity, we assume $\gamma >0$, the $\gamma <0$ case being similar.
The crucial observation of \cite{FH} is the subadditivity of the rate of rotation under composition. In our notation, for $f$ and $g$ bilipschitz maps,
\[ \overline{\gamma}_{g \circ f}(z) \leqslant \overline{\gamma}_f(z)+\overline{\gamma}_{g}(f(z)), 
\]
provided that $ \overline{\gamma}_{g \circ f}(z)>0$. 
Since $\overline{\gamma}_{s_\gamma}(0)=\gamma$, the repeated application of subadditivity and the estimate of Theorem \ref{thm:pointwise} implies
\[ \gamma \leqslant N  \left(L_0-\frac{1}{L_0} \right),
\]
as required.
\end{proof}

\begin{remark}
The content of Theorem \ref{thm:factoring} is that bilipschitz factoring even when exists might need exponentially more factors than optimal quasiconformal factoring.
One may visualize the difference in this particular example by considering  the maps $ f_\tau(z) = \frac{z}{|z|} |z|^{\alpha(1+i \gamma)}$, where $\tau = \alpha(1+i \gamma)$, with the parameter space 
%$\alpha(1+i \gamma) \in \
$\mathbb H=\{ \re \tau >0 \}$. Here the logarithm of the quasiconformal distortion $\log K(f_\tau \circ  f_{\tau'}^{-1})$ equals the hyperbolic distance  $d_{\mathbb H}(\tau, \tau')$. If we now  start at $s_\gamma(z)=z|z|^{i \gamma}$,  travelling to the identity along the hyperbolic geodesic in $\mathbb H$ provides  us the optimal quasiconformal factoring, while bilipschitz factoring requires travelling along the horocycle $\alpha=1$.
\end{remark}

%\section*{Acknowledgements}

\bibliographystyle{amsplain}

\end{document}